\newtheorem{theorem}{Theorem}[section]
\newtheorem{prop}[theorem]{Proposition}
\newtheorem{prb}[theorem]{Problem}
\newtheorem{obs}[theorem]{Observation}
\newtheorem{cor}[theorem]{Corollary}
\newtheorem{dfn}[theorem]{Definition}
\newtheorem{exm}[theorem]{Example}
\newtheorem{con}[theorem]{Conjecture}
\newtheorem{defn}[theorem]{Definition}
\numberwithin{equation}{section}
\begin{document}

\title{\textbf{\ Secure Domination in Digraphs} }
\author{ {\bf \sc  Mart\'{\i}n Manrique$^{1},$   Karam Ebadi$^1$ and Morteza Ebrahimi$^{2}$ }\\{\footnotesize $^1$National Centre for Advanced Research in Discrete Mathematics ($n$-CARDMATH)} \\{\footnotesize  Kalasalingam University}\\{\footnotesize Anand Nagar,  Krishnankoil-626126, India.}\\   {\footnotesize  $^2$ Department of Mathematics Islamic Azad University of Shahindej, Iran.}\\
{\footnotesize e-mails: {\it martin.manrique@gmail.com, karam\_ebadi@yahoo.com, M.ebrahimi55@gmail.com}}}

\date{}

\maketitle

\begin{abstract}
Given a graph $G=(V,E),$ a set $S\subseteq V$ is dominating if for
every $v\in V\setminus S$ there exists $u\in S$ such that $uv\in E.$ A
dominating set $S\subseteq V$ is secure if for every $v\in V\setminus
S$ there exists $u\in S$ such that $(S\setminus \{u\})\cup \{v\}$ is
dominating. In this work we extend the concept of secure dominating set to
digraphs in four different ways, all of them with interesting applications,
and prove some results regarding each of them.
\end{abstract}

\textbf{Keywords}: protection in digraphs, out-dominating sets, out-secure sets.

\textbf{2010 Mathematics Subject Classification Number:} 05C20, 05C69.

\section{Introduction}

Throughout this paper $D=(V,A)$ is a finite directed graph with neither
loops nor multiple arcs (but pairs of opposite arcs are allowed) and $%
G=(V,E) $ is a finite undirected graph with neither loops nor multiple
edges. Unless stated otherwise, $n$ denotes the order os $D$ (or $G$). For basic terminology on graphs and digraphs, we refer to Chartrand
and Lesniak \cite{cl}.

Let $D=(V,A)$ be a digraph. For any vertex $v\in V$, the sets $%
N^{+}(u)=\{v: uv\in A\}$ and $N^{-}(u)=\{v: vu\in A\}$ are called the
out-neighborhood and in-neighborhood of $u,$ respectively. $N^{+}[u]=N^{+}(u)\cup \{u\}$ is the closed out-neighborhood of $u,$ and $N^{-}[u]=N^{-}(u)\cup \{u\}$ is the closed in-neighborhood of $u.$ The in-degree and out-degree of $u$ are defined by $%
d^{-}(u)=|N^{-}(u)|$ and $d^{+}(u)=|N^{+}(u)|$. The minimum in-degree, the
minimum out-degree, the maximum in-degree and the maximum out-degree of $D$ are
denoted by $\delta ^{-}$, $\delta ^{+}$, $\Delta ^{-}$ and $\Delta ^{+}$
respectively, while $\delta ^{0}=\min\{\delta ^{-},\delta ^{+}\}$ is the minimum degree of $D.$

Let $G=(V,E)$ be a graph. A subset $S$ of $V$ is called a dominating set of $%
G$ if every vertex in $V\setminus S$ is adjacent to at least one vertex in $%
S.$ The minimum cardinality of a dominating set of $G$ is called the
domination number of $G$ and is denoted by $\gamma (G)$ or simply $\gamma .$

Let $D=(V,A)$ be a digraph. A subset $S$ of $V$ is called an out-dominating
set of $D$ if for every vertex $v\in V\setminus S$ there exists at least one
vertex $u\in S\cap N^{-}(v).$ The minimum cardinality of an out-dominating
set of $D$ is called the out-domination number of $D$ and is denoted by $%
\gamma ^{+}(D)$, or simply $\gamma ^{+}.$ In-dominating sets in digraphs are
defined in a similar way, and the minimum cardinality of an in-dominating
set of $D$ is called the in-domination number of $D,$ denoted by $%
\gamma ^{-}(D)$.

Although domination and other related concepts have been extensively studied
for undirected graphs, the respective analogues on digraphs have not received
much attention. Fu \cite{fu} studied the out-domination number of a directed
graph $D=(V,A)$. Arumugam et al. \cite{sa} introduced the concepts of total
and connected domination in digraphs.

A survey of results on domination in directed graphs by Ghoshal, Laskar and
Pillone is found in chapter 15 of Haynes et al. \cite{h}, but most of the
results in this survey deal with the concepts of kernels and solutions (that
is, independent in- and out-dominating sets) in digraphs and on domination
in tournaments.

Given an undirected graph $G=(V,E)$, the set $S\subseteq V$ is a secure
dominating set (SDS) of $G$ if it is dominating and for each $u\in V\setminus S$ there exists $%
v\in N(u)\cap S$ such that $(S\setminus \{v\}\cup \{u\}$ is a dominating
set. The minimum cardinality of an SDS of $G$ is called the secure domination
number of $G$ and is denoted by $\gamma _{s}(G),$ while a minimum SDS is called a $\gamma _{s}$-set \cite{E3,  ma, wf}.

This notion can be extended to digraphs in several ways. There are three
very natural extensions of the concept:

\begin{dfn}
Let $D=(V,A)$ be a digraph. A subset $S\subseteq V$ is called a secure
out-dominating set (SODS) of $D$ if $S$ is out-dominating and for every
vertex $v\in V\setminus S$, there exists a vertex $u\in (N^{+}(v)\cup N^{-}(v))\cap S$ such that $(S\setminus \{u\})\cup \{v\}$ is an
out-dominating set. In this case we say that $u$ defends $v$. The minimum
cardinality of an SODS in $D$ is called the secure
out-domination number of $D$ and is denoted by $\gamma _{so}(D),$ while a minimum SODS is called a $\gamma _{so}$-set.
\end{dfn}

\begin{dfn}
Let $D=(V,A)$ be a digraph and let $G$ be the underlying undirected graph of
$D$. A subset $S\subseteq V$ is called an out-secure dominating set (OSDS)
of $D$ if $S$ is dominating in $G$ and for every vertex $v\in V\setminus S$,
there exist a vertex $u\in N^{-}(v)\cap S$ such that $(S\setminus \{u\})\cup
\{v\}$ is a dominating set of $G$. In this case we say that $u$ defends $v$.
The minimum cardinality of an OSDS in $D$ is called the
out-secure domination number of $D$ and is denoted by $\gamma _{os}(D),$ while a minimum OSDS is called a $\gamma _{os}$-set.
\end{dfn}

\begin{dfn}
Let $D=(V,A)$ be a digraph. A subset $S\subseteq V$ is called an out-secure
out-dominating set (OSODS) of $D$ if $S$ is out-dominating and for every
vertex $v\in V\setminus S$, there exist a vertex $u\in N^{-}(v)\cap S$ such
that $(S\setminus \{u\})\cup \{v\}$ is an out-dominating set. In this case
we say that $u$ defends $v$. The minimum cardinality of an OSODS in $D$ is called the out-secure out-domination number of $%
D$ and is denoted by $\gamma _{oso}(D),$ while a minimum OSODS is called a $\gamma _{oso}$-set.
\end{dfn}

Of course, these three concepts can be defined as well for in-dominating and
in-secure sets. However, as happens with solutions and kernels, a result in
the out- version for a digraph $D=(V,A)$ corresponds to a result in the in-
version for $\overleftarrow{D}=(V,\overleftarrow{A})$, where $\overleftarrow{%
A}=\{uv:vu\in A\}$. Therefore, the study of the whole matter can be accomplished by
choosing only the out- or the in- version.

Another way of extending secure dominating sets to digraphs is the following:

\begin{dfn}
Let $D=(V,A)$ be a digraph. A subset $S\subseteq V$ is called an in-secure
out-dominating set (ISODS) of $D$ if $S$ is out-dominating and for every
vertex $v\in V\setminus S$, there exist a vertex $u\in N^{+}(v)\cap S$ such
that $(S\setminus \{u\})\cup \{v\}$ is an out-dominating set. In this case
we say that $u$ defends $v$. The minimum cardinality of an ISODS in $D$ is called the in-secure out-domination number of $%
D $ and is denoted by $\gamma _{iso}(D),$ while a minimum ISODS is called a $\gamma _{iso}$-set.
\end{dfn}

Of course, we can talk of out-secure in-dominating sets, but any result regarding them for a digraph $D$ will correspond to a result on in-secure out-dominating
sets for $\overleftarrow{D}$.

\begin{exm} The digraph  $D$ given in Figure 1 is an example where $\gamma^{+}=2,$  $\gamma_{os}=2,$ $\gamma_{so}=3,$ $\gamma_{oso}=4,$ and $\gamma_{iso}=5:$ It is easy to check that $\{v_4,v_5\}$ is a minimum out-dominating set, as well as a minimum OSDS;  $\{v_3,v_4,v_5\}$ is a minimum SODS; $\{v_1,v_2,v_4,v_5\}$ is a minimum OSODS, and  $\{v_1,v_2,v_3,v_6,v_7\}$ is a minimum ISODS. There are also simple examples in which $\gamma^{+}<\gamma_{os},$ $\gamma_{so} < \gamma_{os},$ and $\gamma_{iso} < \gamma_{oso}.$\\

\begin{center}
%TeXCAD Picture [Figer sequre.tex.pic]. Options:
%\grade{\on}
%\emlines{\off}
%\epic{\off}
%\beziermacro{\on}
%\reduce{\on}
%\snapping{\off}
%\quality{8.00}
%\graddiff{0.01}
%\snapasp{1}
%\zoom{4.0000}
\unitlength .85mm % = 2.85pt
\linethickness{0.4pt}
\ifx\plotpoint\undefined\newsavebox{\plotpoint}\fi % GNUPLOT compatibility
\begin{picture}(53.5,61)(0,0)
\put(36.25,52){\circle*{1.5}}
\put(36.25,60.25){\circle*{1.5}}
\put(49.25,34){\circle*{1.5}}
\put(23.25,34){\circle*{1.5}}
\put(23.25,34.25){\vector(1,0){25.25}}
\put(36.5,44){\circle*{1.5}}
\put(36.25,24){\circle*{1.5}}
\put(36.25,14){\circle*{1.5}}
%\vector(49.25,34)(36.75,24.5)
\put(36.75,24.5){\vector(-4,-3){.07}}\multiput(49.25,34)(-.044326241,-.033687943){282}{\line(-1,0){.044326241}}
%\end
%\vector(49.25,34)(36.5,14.5)
\put(36.5,14.5){\vector(-2,-3){.07}}\multiput(49.25,34)(-.033730159,-.051587302){378}{\line(0,-1){.051587302}}
%\end
%\vector(23,34)(35.75,24.25)
\put(35.75,24.25){\vector(4,-3){.07}}\multiput(23,34)(.044117647,-.033737024){289}{\line(1,0){.044117647}}
%\end
%\vector(23,33.75)(35.75,14.5)
\put(35.75,14.5){\vector(2,-3){.07}}\multiput(23,33.75)(.033730159,-.050925926){378}{\line(0,-1){.050925926}}
%\end
%\vector(23.25,34.75)(35.5,51.75)
\put(35.5,51.75){\vector(3,4){.07}}\multiput(23.25,34.75)(.033653846,.046703297){364}{\line(0,1){.046703297}}
%\end
%\vector(23,35)(35.75,60.25)
\put(35.75,60.25){\vector(1,2){.07}}\multiput(23,35)(.033730159,.066798942){378}{\line(0,1){.066798942}}
%\end
%\vector(36.5,51.75)(48.25,35)
\put(48.25,35){\vector(2,-3){.07}}\multiput(36.5,51.75)(.033667622,-.047994269){349}{\line(0,-1){.047994269}}
%\end
%\vector(48.25,34.5)(36.75,43.5)
\put(36.75,43.5){\vector(-4,3){.07}}\multiput(48.25,34.5)(-.043071161,.033707865){267}{\line(-1,0){.043071161}}
%\end
%\vector(36.75,43.5)(36,43.25)
\put(36,43.25){\vector(-3,-1){.07}}\multiput(36.75,43.5)(-.09375,-.03125){8}{\line(-1,0){.09375}}
%\end
%\vector(36.75,60.5)(49.5,34.75)
\put(49.5,34.75){\vector(1,-2){.07}}\multiput(36.75,60.5)(.033730159,-.068121693){378}{\line(0,-1){.068121693}}
%\end
\put(35,56){$v_1$}
\put(35,47.75){$v_2$}
\put(35,39.75){$v_3$}
\put(17.75,34){$v_4$}
\put(51.5,34){$v_5$}
\put(35,26.75){$v_6$}
\put(35,18){$v_7$}
\put(27.25,8.25){Figure 1:}
%\vector(35.5,43.75)(24,34.75)
\put(24,34.75){\vector(-4,-3){.07}}\multiput(35.5,43.75)(-.043071161,-.033707865){267}{\line(-1,0){.043071161}}
%\end
\end{picture}
\end{center}

\end{exm}

The four notions defined above have interest from the mathematical point of
view. Moreover, useful applications for all four can be found. In every
case, we consider our universe as a finite set of vertices, and a set of
elements that must cover it, either protecting, surveying, providing a
service, etc., and which must be promptly helped or replaced if necessary:

Suppose an element in point $v$ provides a service to point $u$, but the
converse not necessarily holds; however, it is not much more difficult to go
from $u$ to $v$ than from $v$ to $u$. Then the situation can be modelled as
an SODS in a digraph. As an example, a warden up in a
hill can survey an adjacent valley, but the converse is not true; if there
is a road and the team has cars, it is not much more difficult nor takes
much more time to go up the hill than down the hill.

If the service can be as easily provided from $u$ to $v$ than from $v$ to $u,$ but transport from $u$ to $v$ is much easier than transport from $v$ to $u,$
 the situation corresponds to an OSDS. For example,
broadcasting towers in different points of a river bank: If the river runs
down a somewhat flat area, a broadcasting tower in $u$ provides service to $%
v $ and vice versa. However, if the current is strong it is much easier to
go down the river than up the river.

When both service and transport are easy in one direction but difficult in
the other, then our set of elements is an OSODS. For
example, a broadcasting tower (or an army) up in the hill covers (protects)
the adjacent valley, but a tower (army) in the valley does not cover (does
not protect) the upper part of the hill. In a similar way, if roads are not
available it is much easier to go down the hill than up the hill.

If service is much easily provided in one direction, but transport is much
easier in the other, then the situation is that of an ISODS. As an example, we have a thick forest up the river and
an open area down the river. A warden or camera in the open area can survey
migratory birds or helicopters passing over itself and over the forest
area, but if he (it) is on the dense vegetation spot it can only detect
those passing over that spot, not those going over the open area. However,
as mentioned above, transport may be much easier down the river than up the
river.

\begin{obs}\label{obs 1.6}
If $D$ is a symmetric digraph, then the notions of SODS, OSDS, OSODS, and
ISODS coincide, and they coincide as well with the concept of SDS in the
underlying undirected graph of $D$.
\end{obs}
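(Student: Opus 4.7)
The plan is to unwind the four definitions and show that, in the symmetric case, the hypotheses on the defending vertex $u$ and on the dominating property all collapse to the same undirected conditions.

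First I would record the key structural fact: if $D=(V,A)$ is symmetric, then $uv\in A$ iff $vu\in A$, so for every vertex $v$ we have $N^{+}(v)=N^{-}(v)=N(v)$, where $N(v)$ denotes the open neighborhood of $v$ in the underlying undirected graph $G$. From this, an arbitrary set $S\subseteq V$ is out-dominating in $D$ iff it is in-dominating in $D$ iff it is dominating in $G$; indeed, all three statements read "for every $v\in V\setminus S$ there exists $u\in S\cap N(v)$".

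Next I would compare the four "security" conditions side by side. For SODS the defender $u$ ranges over $(N^{+}(v)\cup N^{-}(v))\cap S$; for OSDS and OSODS over $N^{-}(v)\cap S$; for ISODS over $N^{+}(v)\cap S$. Under symmetry, each of these sets equals $N(v)\cap S$, so the range of candidate defenders is the same in all four definitions. Similarly, the required property of $(S\setminus\{u\})\cup\{v\}$ is either "dominating in $G$" (OSDS) or "out-dominating in $D$" (the other three), and by the previous paragraph these two properties coincide when $D$ is symmetric. Putting these together, the defining condition becomes in each case: $S$ is dominating in $G$, and for every $v\in V\setminus S$ there exists $u\in N(v)\cap S$ such that $(S\setminus\{u\})\cup\{v\}$ is dominating in $G$, which is exactly the definition of an SDS in $G$.

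There is no real obstacle here; the only thing to be careful about is to verify each of the four definitions individually rather than to argue two at a time, since the hypotheses on $u$ and on $(S\setminus\{u\})\cup\{v\}$ differ from one definition to the next, and the reduction to the undirected SDS condition uses symmetry in both places.
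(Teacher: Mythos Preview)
Your argument is correct: in a symmetric digraph $N^{+}(v)=N^{-}(v)=N(v)$, so the range of defenders and the notion of domination all reduce to the undirected versions, and each of the four definitions collapses to that of an SDS in $G$. The paper states this as an observation without proof, so there is nothing further to compare; your write-up simply makes explicit what the paper leaves as immediate from the definitions.
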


\begin{obs} \label{obs 1.7}
Let $D$ be a digraph, and let $D^{\prime }$ be a spanning subdigraph of $D$.
Then $\gamma _{so}(D)\leq \gamma _{so}(D^{\prime })$, $\gamma _{os}(D)\leq
\gamma _{os}(D^{\prime })$, $\gamma _{oso}(D)\leq \gamma _{oso}(D^{\prime })$%
, and $\gamma _{iso}(D)\leq \gamma _{iso}(D^{\prime })$.
\end{obs}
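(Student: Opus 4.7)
The plan is to handle all four inequalities with one common monotonicity principle and then specialize four times. Since $D'$ is a spanning subdigraph of $D$, we have $V(D')=V(D)$ and $A(D')\subseteq A(D)$, which immediately gives $N^{+}_{D'}(v)\subseteq N^{+}_{D}(v)$ and $N^{-}_{D'}(v)\subseteq N^{-}_{D}(v)$ for every vertex $v$. From this it follows at once that (i) any out-dominating set of $D'$ is out-dominating in $D$, and (ii) any dominating set of the underlying graph of $D'$ is dominating in the underlying graph of $D$, since having some defender in a smaller in-neighborhood (or edge neighborhood) already guarantees the same defender in the larger one.

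The core idea is therefore: take a minimum secure set of the appropriate flavor in $D'$ and show, using (i)--(ii), that the same set is a secure set of the same flavor in $D$, so $\gamma_{*}(D)\leq |S|=\gamma_{*}(D')$. Concretely, for the SODS inequality I would pick a $\gamma_{so}$-set $S$ of $D'$, note by (i) that $S$ out-dominates $D$, and for each $v\in V\setminus S$ take the defender $u\in (N^{+}_{D'}(v)\cup N^{-}_{D'}(v))\cap S$ guaranteed by the SODS property in $D'$; the inclusion of neighborhoods puts $u$ in $(N^{+}_{D}(v)\cup N^{-}_{D}(v))\cap S$, and applying (i) once more to the set $(S\setminus\{u\})\cup\{v\}$ (which is out-dominating in $D'$) shows it is out-dominating in $D$. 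The OSDS case is identical, except that the defender is sought in $N^{-}_{D'}(v)\cap S$ and domination is read in the underlying graph, using (ii) instead of (i). The OSODS case combines the $N^{-}$ defender condition with out-domination via (i), and the ISODS case is the same with $N^{+}_{D'}(v)\cap S$ in place of $N^{-}_{D'}(v)\cap S$.

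There is no real obstacle: the whole argument rests on the elementary fact that adding arcs can only enlarge in- and out-neighborhoods, so every dominating/out-dominating/secure certificate that exists in $D'$ survives verbatim in $D$. The only point that deserves a sentence of care is that the \emph{same} defender $u$ chosen in $D'$ still defends $v$ in $D$; this is not automatic from monotonicity of domination alone, but follows because we apply the monotonicity principle twice -- once to $S$ itself and once to the swapped set $(S\setminus\{u\})\cup\{v\}$. Because all four definitions differ only in which neighborhood ($N^{+}$, $N^{-}$, or $N$ in the underlying graph) is used for the dominating role and for the defender role, the four statements follow from the same template with minimal bookkeeping.
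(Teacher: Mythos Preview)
Your proposal is correct. In the paper this statement is recorded as an Observation and no proof is supplied; your monotonicity argument---that enlarging the arc set can only enlarge all in- and out-neighborhoods, so any out-dominating (respectively, dominating-in-the-underlying-graph) certificate persists, applied once to $S$ and once to the swapped set $(S\setminus\{u\})\cup\{v\}$---is exactly the intended justification and spells out the details the paper leaves implicit.
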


Now we will show the relations between the concepts defined above:

\begin{prop}\label{prop 1.8}
Let $D$ be a digraph. Then we have the following inequality chains (although $\gamma _{os}\leq \gamma _{iso}$ only holds for digraphs without symmetric arcs):
\begin{align*}
\gamma _{s},\ \gamma ^{+}\leq\gamma _{os},\ \gamma _{so}\leq\gamma _{oso},\ \gamma _{iso} .
\end{align*}

\end{prop}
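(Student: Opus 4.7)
The plan is to verify each of the eight claimed inequalities directly from the definitions, showing that any set satisfying the stronger property is automatically a set satisfying the weaker one. Two observations will be used repeatedly: (i) out-domination in $D$ implies domination in the underlying graph $G$; and (ii) both $N^{-}(v)$ and $N^{+}(v)$ are contained in the undirected neighborhood $N^{+}(v)\cup N^{-}(v)$. I also note that SODS, OSODS, and ISODS are out-dominating by definition, and that the very requirement of an OSDS defender in $N^{-}(v)\cap S$ for each $v\notin S$ already forces $S$ to be out-dominating.

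Seven of the eight inequalities are essentially formal. For $\gamma^{+}\leq\gamma_{so},\gamma_{os}$, the observations above immediately give out-domination. For $\gamma_{s}\leq\gamma_{so}$ and $\gamma_{s}\leq\gamma_{os}$, both defenders lie in $N(v)\cap S$ in $G$ by (ii), and $(S\setminus\{u\})\cup\{v\}$ being out-dominating (SODS case) or dominating in $G$ (OSDS case) yields a dominating set in $G$. For $\gamma_{os},\gamma_{so}\leq\gamma_{oso}$, the OSODS defender $u\in N^{-}(v)\cap S$ is simultaneously a valid OSDS defender and, by (ii), a valid SODS defender, while out-domination of $(S\setminus\{u\})\cup\{v\}$ yields the weaker properties required by OSDS or SODS. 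The inequality $\gamma_{so}\leq\gamma_{iso}$ follows similarly using $N^{+}(v)\subseteq N^{+}(v)\cup N^{-}(v)$.

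The main obstacle is $\gamma_{os}\leq\gamma_{iso}$ under the hypothesis that $D$ has no symmetric arcs. The difficulty is that the ISODS defender $u$ of $v\notin S$ satisfies $vu\in A$, so by the no-symmetric-arcs assumption $uv\notin A$, i.e., $u\notin N^{-}(v)$; hence $u$ cannot be recycled as an OSDS defender. My approach is to pick any $u'\in N^{-}(v)\cap S$ (nonempty because $S$ is out-dominating) and to verify that $S'':=(S\setminus\{u'\})\cup\{v\}$ dominates $G$. Split on $w\notin S''$: if $w=u'$, then $u'v\in A$ makes $v\in S''$ an undirected neighbor of $w$; otherwise $w\notin S$ and $w\neq v$, and applying the ISODS property to $w$ yields $u_{w}\in N^{+}(w)\cap S$, which is an undirected neighbor of $w$ in $G$. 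If $u_{w}\neq u'$ then $u_{w}\in S''$ and we are done; and if $u_{w}=u'$, then the arc $wu'$ combined with the no-symmetric-arcs hypothesis gives $u'\notin N^{-}(w)$, so any in-neighbor of $w$ in $S$ (supplied by out-domination) must belong to $S\setminus\{u'\}\subseteq S''$. This exhausts the cases and closes the argument.
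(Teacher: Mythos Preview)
Your proof is correct and follows essentially the same route as the paper's: seven inequalities are handled by the obvious containments among the definitions, and for $\gamma_{os}\leq\gamma_{iso}$ you, like the paper, take as OSDS defender an in-neighbor $u'\in N^{-}(v)\cap S$ supplied by out-domination and then use the no-symmetric-arcs hypothesis to show that any $w\notin S$ with $wu'\in A$ is still dominated via its in-neighbor in $S\setminus\{u'\}$. The only cosmetic difference is that the paper derives $\gamma_{s}\leq\gamma_{so},\gamma_{os}$ by passing through the symmetric completion of $D$ (Observations~\ref{obs 1.6} and~\ref{obs 1.7}), whereas you argue directly from the definitions; the content is the same.
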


\begin{proof} $\gamma _{s}\leq \gamma _{so}$ and $\gamma _{s}\leq \gamma
_{os}$ follow directly from Observation \ref{obs 1.6} and Observation \ref{obs 1.7}. Since an SODS is
out-dominating, then $\gamma ^{+}\leq \gamma _{so}$. Let $S$ be an OSDS,
then for every $u\in V\setminus S$ there exists $v\in S\cap N^{-}(u)$, that
is, $S$ is out-dominating, and hence $\gamma ^{+}\leq \gamma _{os}$. Now let
$S$ be an OSODS, then $S$ is both an SODS and an OSDS, which implies $\gamma
_{so}\leq \gamma _{oso}$ and $\gamma _{os}\leq \gamma _{oso}$. In a similar
way, every ISODS is an SODS, so $\gamma _{so}\leq \gamma _{iso}$.

Now consider a digraph $D=(V,A)$ without symmetric arcs and let $S$ be an
ISODS of $D.$ For every $v\in V\setminus S,$ there are at least one vertex $%
u\in N^{-}(v)\cap S$ and one vertex $u^{\prime }\in N^{+}(v)\cap S.$ Since $D$
has no symmetric arcs, $u \neq u^{\prime }.$ Then $S^{\prime }=(S\setminus
\{u\})\cup \{v\}$ is a dominating set of $G,$ the underlying graph of $D$: $u
$ is dominated by $v$; every vertex $w\in N^{+}(u)\cap (V\setminus S)$ has
an out-neighbor in $S^{\prime },$ and every vertex $w^{\prime }\in
(V\setminus S)\setminus N^{+}(u)$ has an in-neighbor in $S^{\prime }.$
Therefore, $S$ is an OSDS, which implies $\gamma _{os}\leq \gamma _{iso}.$
\end{proof}

Now we state two observations and two definitions which are useful for the study
of the parameters defined above.

\begin{obs}\label{obs 1.99}
If a vertex $u$ in a digraph $D$ has in-degree $0,$ then $u$ necessarily belong to every out-dominating set. If a vertex $v$ has out-degree $0,$ then $v$ necessarily belongs to every ISODS. An isolated vertex belongs to every OSDS. On the other hand, if $w$ is a vertex of $D$ with an in-neighbor $x$ and an out-neighbor $y,$ then $V(D)\setminus \{y\}$ is an OSODS,  while  $V(D)\setminus \{w\}$ is an ISODS of $D.$ Moreover, for every nontrivial digraph without symmetric arcs, $2\leq \gamma _{so},\ \gamma _{oso},\ \gamma _{iso}.$
\end{obs}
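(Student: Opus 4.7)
The observation bundles five assertions: three ``forced membership'' statements, an explicit construction of an OSODS and an ISODS of cardinality $n-1$, and a lower bound of $2$ for three of the parameters. I would handle them in the order listed, since each amounts to unpacking the relevant definition together with a careful local check.

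For the forced-membership claims the argument is by direct contradiction against the definitions. If $u$ satisfies $N^-(u)=\emptyset$ but lies outside an out-dominating set $S$, the definition produces some $v\in S\cap N^-(u)$, which is impossible. If $v$ has $N^+(v)=\emptyset$ and lies outside an ISODS $S$, then $N^+(v)\cap S=\emptyset$, so no defender for $v$ can exist. The isolated-vertex case is the corresponding statement for domination in the underlying undirected graph and is immediate.

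For the construction $S=V(D)\setminus\{y\}$ I would verify both OSODS conditions directly: $w\in N^-(y)\cap S$ out-dominates the unique vertex $y$ outside $S$; and $w$ is a valid defender of $y$, because the swapped set $(S\setminus\{w\})\cup\{y\}=V(D)\setminus\{w\}$ contains $x\in N^-(w)$ (and in the degenerate subcase $x=y$, $y$ itself still lies inside the swapped set and out-dominates $w$ via $yw\in A$). The argument for $S=V(D)\setminus\{w\}$ being an ISODS is dual: $x$ out-dominates $w$, and $y\in N^+(w)\cap S$ defends $w$, since after the swap the set becomes $V(D)\setminus\{y\}$ and $w\in N^-(y)$ lies inside it.

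The lower bound $2\leq \gamma_{so},\gamma_{oso},\gamma_{iso}$ for nontrivial digraphs without symmetric arcs is where the only real subtlety lies, and is the step I expect to be the main obstacle to a clean write-up. Suppose a singleton $S=\{u_0\}$ realizes any of these three parameters and $n\geq 2$. Out-domination of $S$ forces $u_0 v\in A$ for every $v\neq u_0$. The defense condition then requires, for some fixed $v\in V\setminus S$, that the swapped singleton $\{v\}$ be out-dominating, which in turn forces $v u_0\in A$; together with $u_0 v\in A$ this produces the symmetric pair $\{u_0 v,\,v u_0\}$, contradicting the hypothesis. The same computation handles all three parameters, because the in/out distinctions only affect which vertex is eligible as defender, not the requirement that the swapped singleton out-dominate the entire digraph. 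Hence $|S|\geq 2$ in each case, and the observation is established.
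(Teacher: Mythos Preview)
The paper states this as an observation without proof, so there is no argument to compare against; your proposal stands on its own merits, and it is correct. Each of the forced-membership claims follows immediately from the definitions as you indicate, the two explicit constructions are verified correctly (including your handling of the degenerate case $x=y$), and your lower-bound argument for $\gamma_{so},\gamma_{oso},\gamma_{iso}$ is sound: assuming a valid singleton $\{u_0\}$, out-domination forces $u_0v\in A$ for every $v\ne u_0$, while the fact that the swapped set $\{v\}$ must itself be out-dominating forces $vu_0\in A$, yielding the forbidden symmetric pair. Your remark that the argument is uniform across the three parameters is justified, since the eligibility of the defender is already guaranteed by the assumption that $\{u_0\}$ is valid, and the contradiction comes solely from the out-domination requirement on the swapped singleton.
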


\begin{obs}\label{obs 1.9}
 For the directed path $P_{n}$ with $n\geq 1$ vertices, $\gamma
^{+}(P_{n})=\lceil \frac{n}{2}\rceil $ and for the directed cycle $C_{n}$ with $%
n\geq 3$ vertices, $\gamma ^{+}(C_{n})=\lceil \frac{n}{2}\rceil .$
\end{obs}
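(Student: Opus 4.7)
The plan is to bound $\gamma^{+}$ from below by a structural argument about in-neighbors and to match the bound with an explicit alternating construction, treating $P_{n}$ and $C_{n}$ separately.

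For the directed path $P_{n}$ with vertices $v_{1},\ldots ,v_{n}$ and arcs $v_{i}v_{i+1}$, I would first invoke Observation~\ref{obs 1.99}: since $d^{-}(v_{1})=0$, every out-dominating set $S$ must contain $v_{1}$. For $i\ge 2$ the unique in-neighbor of $v_{i}$ is $v_{i-1}$, so $v_{i}\notin S$ forces $v_{i-1}\in S$. Consequently the set of indices $i\in\{2,\ldots ,n\}$ with $v_{i}\notin S$ contains no two consecutive integers, hence has cardinality at most $\lceil(n-1)/2\rceil$. This gives
\[
|S|\;\ge\;n-\lceil(n-1)/2\rceil\;=\;\lceil n/2\rceil .
\]
The bound is attained by $S=\{v_{2k-1}:1\le k\le\lceil n/2\rceil\}$, which has size $\lceil n/2\rceil$ and out-dominates every remaining $v_{2k}$ via $v_{2k-1}\in S$.

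For the directed cycle $C_{n}$ the argument is symmetric. Each vertex has a unique in-neighbor $v_{i-1}$ (indices modulo $n$), so again $v_{i}\notin S\Rightarrow v_{i-1}\in S$, and $V(C_{n})\setminus S$ is an independent set in the underlying undirected cycle, whose independence number is $\lfloor n/2\rfloor$. This yields $|S|\ge n-\lfloor n/2\rfloor=\lceil n/2\rceil$. The set $S=\{v_{2k-1}:1\le k\le\lceil n/2\rceil\}$ again attains the bound, each non-member $v_{2k}$ being out-dominated by its predecessor $v_{2k-1}\in S$.

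There is no genuine obstacle in this proof; everything hinges on the fact that every vertex of $P_{n}$ and $C_{n}$ has at most one in-neighbor, which reduces out-domination to a consecutive-cover problem solved by alternating vertices. The only mild care is a parity check in the cyclic construction to confirm that when $n$ is odd, including $v_{n}$ in $S$ causes no conflict at the wrap-around arc $v_{n}v_{1}$; since $v_{1}\in S$ already, the verification is immediate.
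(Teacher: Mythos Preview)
Your argument is correct. The paper itself does not supply a proof of Observation~\ref{obs 1.9}; it is recorded as a standard fact and used later (e.g.\ in Propositions~\ref{prop 3.5} and~\ref{prop 3.6}) without justification. Your treatment---reducing out-domination in $P_n$ and $C_n$ to the statement that $V\setminus S$ is independent in the underlying undirected path or cycle, and then exhibiting the alternating set---is exactly the natural proof one would expect for such an observation, and all the arithmetic checks out.
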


\begin{defn}
Let $D=(V,A)$ be a directed graph, $S\subset V$ and $u\in S.$ A vertex $v\in
V$ is called an out-private neighbor of $u$ with respect to $S$ if $%
N^{-}[v]\cap S=\{u\},$ and $v$ is called an in-private neighbor of $u$ with
respect to $S$ if $N^{+}[v] \cap S=\{u\}.$ The set of all out-private
neighbors of $u$ with respect to $S$ is denoted by $pn^{+}(u,S)$ and the set
of all in-private neighbors of $u$ with respect to $S$ is denoted by $%
pn^{-}(u,S).$
\end{defn}

\section{Out-secure out-dominating sets}

\begin{prop} \label{prop 2.1}
Let $S$ be an OSODS  of a digraph  $D.$  A vertex $u\in N^{-}(v)\cap S$ defends   a vertex $v\in V\setminus S$ if, and only if, $ N^{-}(u)\cap S\neq \emptyset$ and
  $ pn^{+}(u,S)\subseteq N^{+}[v].$
\end{prop}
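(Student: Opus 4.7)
The plan is to unfold the definition of ``$u$ defends $v$'' by setting $S'=(S\setminus\{u\})\cup\{v\}$, so that $u$ defends $v$ iff $S'$ is out-dominating, i.e. every vertex of $V\setminus S'$ has an in-neighbor in $S'$. Since $V\setminus S' = \big((V\setminus S)\setminus\{v\}\big)\cup\{u\}$, the verification splits into two natural cases: the single vertex $u$, and the old non-dominator vertices $w\in(V\setminus S)\setminus\{v\}$. This dichotomy mirrors the two conditions on the right-hand side, and drives both directions of the proof.

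For the forward direction, assume $u$ defends $v$. Applying out-domination of $S'$ at $u\in V\setminus S'$ forces some element of $S'=(S\setminus\{u\})\cup\{v\}$ to lie in $N^-(u)$; this yields $N^-(u)\cap S\neq\emptyset$. Next pick $w\in pn^+(u,S)$. If $w=v$, then $w\in S'$ and there is nothing to check; otherwise $w\in V\setminus S'$, and because $N^-(w)\cap S=\{u\}$ while $u\notin S'$, the only possible in-neighbor of $w$ in $S'$ is $v$, forcing $w\in N^+(v)$. Combined with the trivial $v\in N^+[v]$, this gives $pn^+(u,S)\subseteq N^+[v]$.

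For the reverse direction, assume both conditions and verify that $S'$ is out-dominating. At $u\in V\setminus S'$, the hypothesis $N^-(u)\cap S\neq\emptyset$ supplies an in-neighbor in $S\setminus\{u\}\subseteq S'$. For any remaining $w\in V\setminus S'$, so $w\in V\setminus S$ with $w\neq v$, out-domination of $S$ provides some $x\in N^-(w)\cap S$; if $x\neq u$ then $x\in S'$ and we are done, while if $x=u$ is forced, then $w\in pn^+(u,S)\subseteq N^+[v]$ by hypothesis, and since $w\neq v$ this gives $w\in N^+(v)$, so $v\in N^-(w)\cap S'$.

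The only part that is not bookkeeping is recognising which vertices can possibly become undominated when $u$ is removed from $S$: exactly those whose in-domination depended solely on $u$, namely $pn^+(u,S)$. The mildly delicate point is the edge case $v\in pn^+(u,S)$, which is precisely what forces the closed out-neighborhood $N^+[v]$ rather than $N^+(v)$ on the right-hand side; handling this cleanly in both directions is the main thing to be careful about.
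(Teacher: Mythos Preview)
Your argument follows the same line as the paper's—identifying that the only vertices at risk of losing out-domination in $S'=(S\setminus\{u\})\cup\{v\}$ are $u$ itself and the members of $pn^{+}(u,S)$—but you write out both directions in full, whereas the paper argues the forward implication by contrapositive in two sentences and dismisses the converse as ``obvious.''

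There is one small slip in your forward direction, and it is shared verbatim by the paper: from $N^{-}(u)\cap S'\neq\emptyset$ you conclude $N^{-}(u)\cap S\neq\emptyset$, but the in-neighbor of $u$ in $S'$ could in principle be $v$, if the symmetric arc $vu\in A$ is present (opposite arcs are explicitly allowed here), and $v\notin S$. In that situation $u$ may defend $v$ even though $N^{-}(u)\cap S=\emptyset$, so the proposition as stated actually needs the additional hypothesis $vu\notin A$ (or a blanket ``no symmetric arcs'' assumption, which the paper imposes in several neighbouring results). Since the paper's own proof makes exactly the same unjustified jump, this defect belongs to the statement rather than to your attempt; modulo that point, your proof is correct and more carefully written than the original.
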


\begin{proof}
  If $N^{-}(u)\cap S=\emptyset ,$ then $(S\setminus \{u\})\cup \{v\}$
does not out-dominate $u$. If there is a vertex $w\in pn^{+}(u,S)\setminus N^{+}[v]$, then $(S\setminus
\{u\})\cup \{v\}$ does not out-dominate $w.$ The converse is obvious.
\end{proof}

\begin{cor} \label{cor 2.2}
A set $S\subseteq V$ is an OSODS  of    $D$   if, and only if, for every $v\in V\setminus S,$ there exists $u\in N^{-}(v)\cap S$ such that   $ N^{-}(u)\cap S\neq \emptyset$ and  $ pn^{+}(u,S)\subseteq N^{+}[v].$
\end{cor}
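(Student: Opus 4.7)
The plan is to derive this directly from Proposition \ref{prop 2.1} together with the definition of OSODS, recognizing that the existential clause in the statement already encodes out-domination.

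First I would unwind the definition: $S$ is an OSODS of $D$ precisely when (i) $S$ is out-dominating in $D$, and (ii) for every $v\in V\setminus S$ there exists $u\in N^{-}(v)\cap S$ that defends $v$. So the goal is to check that these two conditions together are equivalent to the single condition stated in the corollary.

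For the forward direction, assume $S$ is an OSODS. Then for every $v\in V\setminus S$, condition (ii) yields some $u\in N^{-}(v)\cap S$ defending $v$, and Proposition \ref{prop 2.1} translates the defending requirement into $N^{-}(u)\cap S\neq\emptyset$ and $pn^{+}(u,S)\subseteq N^{+}[v]$, which is exactly what the corollary asserts. For the converse, assume the stated condition. The key observation — this is essentially the only thing to notice — is that the mere existence of $u\in N^{-}(v)\cap S$ for every $v\in V\setminus S$ already implies $S$ is out-dominating, so condition (i) comes for free. Applying Proposition \ref{prop 2.1} once more, the two private-neighborhood conditions on $u$ guarantee that $u$ defends $v$, giving condition (ii).

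There is no real obstacle; the only step that requires a moment of attention is spotting that out-domination of $S$ is not an independent hypothesis but is implicit in the existential quantifier over $u\in N^{-}(v)\cap S$. Once that is observed, the corollary is an immediate reformulation of Proposition \ref{prop 2.1}, quantified over all $v\in V\setminus S$.
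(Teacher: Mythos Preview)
Your argument is correct and matches the paper's intent: the corollary is stated immediately after Proposition~\ref{prop 2.1} without separate proof, so it is meant to follow exactly as you describe. In particular, your observation that out-domination is already implicit in the existence of $u\in N^{-}(v)\cap S$ is the only nontrivial point, and you handle it correctly.
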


\begin{theorem}\label{thm 2.3}
Let $D$ be a digraph without symmetric arcs. Then
   $\frac{2n}{2\Delta^{+}+1}\leq  \gamma_{oso}.$
\end{theorem}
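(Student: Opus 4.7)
The plan is to double-count the arcs of $D$ with tail in a minimum OSODS $S$. For each $v \in V\setminus S$, Corollary~\ref{cor 2.2} furnishes a defender $u(v)\in N^{-}(v)\cap S$ such that, crucially, $N^{-}(u(v))\cap S\neq\emptyset$; let $S_{d}=\{u(v):v\in V\setminus S\}\subseteq S$ be the set of \emph{active defenders}.

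From here I would extract two inequalities. First, since each $u\in S_{d}$ defends only its out-neighbors in $V\setminus S$ and has out-degree at most $\Delta^{+}$, $|V\setminus S|\leq |S_{d}|\,\Delta^{+}$. Second, I list the $|V\setminus S|$ pairwise distinct arcs $u(v)v$ (whose heads exhaust $V\setminus S$) together with, for each $u\in S_{d}$, one further arc $u'u$ with $u'\in N^{-}(u)\cap S$ (giving $|S_{d}|$ more arcs, with distinct heads lying in $S_{d}\subseteq S$). The two families have heads in disjoint sets, so they are disjoint, and consequently
\[
|V\setminus S|+|S_{d}|\;\leq\;\sum_{u\in S}d^{+}(u)\;\leq\;|S|\,\Delta^{+}.
\]

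Eliminating $|S_{d}|$ through the first bound ($|S_{d}|\geq |V\setminus S|/\Delta^{+}$) turns the second into $|V\setminus S|(\Delta^{+}+1)\leq |S|(\Delta^{+})^{2}$, so that $n=|S|+|V\setminus S|\leq |S|\cdot\frac{(\Delta^{+})^{2}+\Delta^{+}+1}{\Delta^{+}+1}$. A routine algebraic check gives $(\Delta^{+}+1)(2\Delta^{+}+1)\geq 2((\Delta^{+})^{2}+\Delta^{+}+1)$ for every $\Delta^{+}\geq 1$, and this upgrades the previous inequality to the desired $|S|\geq \frac{2n}{2\Delta^{+}+1}$.

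The main conceptual point---and the only place where the OSODS hypothesis is really used beyond plain out-domination---is the second count: the security condition of Proposition~\ref{prop 2.1} forces each active defender to receive an arc from within $S$, producing a second family of arcs that must be booked separately from the out-domination arcs reaching $V\setminus S$. Without this extra family, the argument collapses back to the elementary bound $\gamma^{+}\geq n/(\Delta^{+}+1)$. The hypothesis that $D$ has no symmetric arcs plays no active role in the arc count itself; it appears mainly to keep the bound in a nontrivial regime (implicitly ruling out $\Delta^{+}=0$) and to mesh with the companion inequality $\gamma_{oso}\geq 2$ of Observation~\ref{obs 1.99}.
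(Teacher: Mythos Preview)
Your argument is correct and proceeds along a genuinely different line from the paper's. The paper works structurally inside $\langle S\rangle$: letting $\mu$ denote the number of isolated vertices there, it uses Proposition~\ref{prop 2.1} to see that isolated vertices cannot defend, so every $v\in V\setminus S$ is out-dominated by one of the $|S|-\mu$ non-isolated vertices; combined with the observation that $\langle S\rangle$ has at least $\lceil(|S|-\mu)/2\rceil$ arcs, this yields
\[
n \;\le\; |S| + (|S|-\mu)\Delta^{+} - \Big\lceil\tfrac{|S|-\mu}{2}\Big\rceil \;\le\; |S|\Big(\Delta^{+}+\tfrac12\Big)
\]
directly. You instead isolate the set $S_{d}$ of \emph{active defenders} and double-count arcs with tail in $S$, obtaining the pair $|V\setminus S|\le |S_{d}|\,\Delta^{+}$ and $|V\setminus S|+|S_{d}|\le |S|\,\Delta^{+}$, from which you eliminate $|S_{d}|$. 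This yields the intermediate bound $\gamma_{oso}\ge \dfrac{n(\Delta^{+}+1)}{(\Delta^{+})^{2}+\Delta^{+}+1}$, which is strictly \emph{stronger} than the paper's $\dfrac{2n}{2\Delta^{+}+1}$ whenever $\Delta^{+}\ge 2$ and coincides with it for $\Delta^{+}=1$; your final algebraic step merely relaxes it back. As you note, the ``no symmetric arcs'' hypothesis plays no role in your count; the only implicit requirement (present in both arguments) is $\Delta^{+}\ge 1$, without which the stated inequality is false.
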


\begin{proof}

Let $S$ be a minimum OSODS of $D$ and let $\mu$ be the number of isolated vertices in the induced subdigraph $\langle S \rangle.$ From Proposition \ref{prop 2.1}, those  vertices cannot defend any vertex in $V\setminus S,$  so the isolated vertices in the induced subdigraph $\langle S \rangle$ do not have out-private neighbors. Moreover, the induced subdigraph $\langle S \rangle$ has at least $\lceil \frac{|S|-\mu}2 \rceil$ arcs. Therefore,
$n\leq |S|+\Delta^{+}(|S|-\mu)- \lceil \frac{|S|-\mu}2\rceil\leq |S|+\Delta^{+}|S|-\Delta^{+}\mu-  \frac{|S|}2+\frac{\mu}2\leq|S|+\Delta^{+}|S|-  \frac{|S|}2=  \gamma_{oso}(\Delta^{+}+\frac{1}2).$    Hence,  $\frac{2n}{2\Delta^{+}+1}\leq  \gamma_{oso}.$
 \end{proof}

\begin{theorem} \label{thm 2.4}
 For any digraph $D$, $\gamma_{oso}(D)\leq n-\delta^{0}.$
\end{theorem}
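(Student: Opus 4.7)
The plan is to construct, for any digraph with $\delta^0 = k$, an OSODS of size $n - k$ directly. If $k = 0$, then $V$ itself is an OSODS, so suppose $k \geq 1$. The key claim I would prove is that for \emph{every} subset $Y \subseteq V$ with $|Y| = k$, the complement $S = V \setminus Y$ is already an OSODS. This is much stronger than required: no clever choice of which vertices to exclude is needed.

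The engine is a single counting inequality. Since $\delta^- \geq \delta^0 = k$ and $D$ is loopless, for any vertex $z$ and any $k$-element set $Z$ containing $z$,
\[
|N^-(z) \setminus Z| \geq |N^-(z)| - |Z \setminus \{z\}| \geq k - (k-1) = 1.
\]
Thus, whenever we delete a $k$-set, every deleted vertex retains at least one in-neighbor in what remains. Applying this with $Z = Y$ immediately yields out-domination of $S$: for each $y \in Y = V \setminus S$ we can exhibit some $u \in N^-(y) \cap S$.

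For the OSODS/defender condition, I would take, for each $y \in Y$, any such $u$ as the defender. The post-swap candidate is $(S \setminus \{u\}) \cup \{y\} = V \setminus Y'$, where $Y' = (Y \setminus \{y\}) \cup \{u\}$; and because $u \in S$ means $u \notin Y$, we have $|Y'| = k$ as well. So the swapped set is again the complement of a $k$-set, and the same inequality applied with $Z = Y'$ shows it out-dominates $Y'$, confirming that $u$ defends $y$. The only place one might expect difficulty---verifying that the swap preserves out-domination---is handled \emph{for free}, because the swap preserves the property of being the complement of a $k$-subset; the bound $\delta^- \geq k$ does double duty for both verifications. Hence $|S| = n - k$, giving $\gamma_{oso}(D) \leq n - \delta^0$.
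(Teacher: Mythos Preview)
Your proof is correct and in fact cleaner than the paper's. Both arguments remove a $\delta^0$-element set and exploit the counting fact that a vertex with at least $k$ in-neighbors cannot have all of them inside a $k$-set that already contains the vertex itself. The difference is structural: the paper fixes a vertex $u$, deletes a set $B\subseteq N^+(u)$ of size $\delta^0$, and argues that this single $u$ defends every removed vertex (so it needs $\delta^+\geq\delta^0$ just to guarantee $|N^+(u)|\geq\delta^0$, and then checks the defender condition by hand). You instead prove the stronger statement that the complement of \emph{every} $\delta^0$-subset is an OSODS, because the swap $(S\setminus\{u\})\cup\{y\}$ is again the complement of a $\delta^0$-set and the same counting inequality applies verbatim. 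This uniformity is what buys you the ``for free'' verification of the defender condition; it also makes the argument independent of $\delta^+$, so your proof actually yields the slightly sharper bound $\gamma_{oso}(D)\leq n-\delta^-$.
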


\begin{proof}  The result is trivial if $\delta^{0}=0.$ Hence we assume that $\delta^{0}>0.$ Then for every $v\in V,$ $d^+(v)\geq\delta^0.$ Take $u\in V$ and $B\subseteq N^+(u)$ such that $|B|=\delta^0.$ Then $V\setminus B$ is an OSODS of $D,$ since for every $v\in V,\ d^-(v)\geq \delta^0.$ Therefore, $u$ defends $w$ for every $w\in N^+(u),$ since $N^-(u)\neq \emptyset$ and for every $w\in N^+(u),$ $N^-(w)\cap(V\setminus N^+(u))\neq \emptyset.$
 \end{proof}

\begin{theorem} \label{thm 2.5}
 Let $D$ be  any digraph. Then   $\gamma_{oso}=n$  if, and only if, for every   $u\in V(D),$  $d^{+}(u)=0$ or $d^{-}(u)=0$.
\end{theorem}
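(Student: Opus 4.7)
\bigskip

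The plan is to prove the two implications separately, using Observation \ref{obs 1.99} for one direction and Proposition \ref{prop 2.1} for the other.

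For the forward direction I will argue by contrapositive: suppose some vertex $w\in V(D)$ has both an in-neighbor $x$ and an out-neighbor $y$. Observation \ref{obs 1.99} already tells us that $V(D)\setminus\{y\}$ is an OSODS of $D$, so $\gamma_{oso}(D)\leq n-1<n$. Hence $\gamma_{oso}=n$ forces every vertex to have $d^+(u)=0$ or $d^-(u)=0$.

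For the backward direction, assume that every vertex of $D$ is a source ($d^-=0$) or a sink ($d^+=0$), and suppose for contradiction that there is an OSODS $S\subsetneq V$. Pick any $v\in V\setminus S$. First, if $d^-(v)=0$ then $v$ is not out-dominated by $S$ (Observation \ref{obs 1.99}), contradicting that $S$ is out-dominating; therefore $d^-(v)\geq 1$, which by hypothesis forces $d^+(v)=0$. Next, since $S$ is out-dominating, there exists $u\in N^-(v)\cap S$; because $v\in N^+(u)$ we have $d^+(u)\geq 1$, and so by hypothesis $d^-(u)=0$, i.e.\ $N^-(u)=\emptyset$. By Proposition \ref{prop 2.1}, $u$ cannot defend $v$ (the condition $N^-(u)\cap S\neq\emptyset$ fails). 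Since this argument applies to every $u\in N^-(v)\cap S$, no vertex of $S$ defends $v$, contradicting the assumption that $S$ is an OSODS. Thus the only OSODS of $D$ is $V(D)$ itself, and $\gamma_{oso}(D)=n$.

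The verification is essentially a two-line application each of the cited observation and proposition; I don't anticipate any serious obstacle. The one subtlety to handle cleanly is the preliminary step in the backward direction showing that every $v\in V\setminus S$ must satisfy $d^-(v)\geq 1$ (so that an in-neighbor $u$ exists to which Proposition \ref{prop 2.1} can be applied), which is what rules out the case of isolated vertices in $V\setminus S$.
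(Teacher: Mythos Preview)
Your proof is correct and follows essentially the same approach as the paper: both directions invoke Observation \ref{obs 1.99} and Proposition \ref{prop 2.1} in the same way (the paper's argument for the backward direction is phrased directly rather than by contradiction, but the content is identical). Your explicit handling of the case $d^{-}(v)=0$ is a minor clarification over the paper's slightly more compressed treatment.
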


\begin{proof}  $\Leftarrow:$ Suppose there exists at least one vertex $v\in D$ such that  $d^{+}(v)>0$ and $d^{-}(v)>0,$ then from Observation \ref{obs 1.99} we have that  $\gamma_{oso}\leq n-1,$ a contradiction.

$\Rightarrow:$ Assume that for every   $u\in V(D),$  $d^{+}(u)=0$ or $d^{-}(u)=0.$ Then obviously all the vertices of in-degree zero must be in every out-dominating set. Let $S$ denote the set of all such vertices. For each $v\in V\setminus S$, every in-neighbor of $v$ has in-degree zero. From Proposition \ref {prop 2.1}, $v$ is not defended, so it must be in every OSODS of $D.$ Hence $\gamma_{oso}=n.$
\end{proof}

\begin{cor} \label{thm 2.6}
 A nontrivial graph $G$ admits an  orientation $D$ such that  $\gamma_{oso}=n$ if, and only if, $G$ is a bipartite graph.
\end{cor}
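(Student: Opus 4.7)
The statement is an immediate application of Theorem \ref{thm 2.5}, which reduces the problem to the following: \emph{$G$ admits an orientation $D$ in which every vertex has in-degree $0$ or out-degree $0$ if and only if $G$ is bipartite.} So the plan is to prove this reformulated equivalence, invoking Theorem \ref{thm 2.5} for the translation.

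For the ($\Leftarrow$) direction, I would take a bipartition $(X,Y)$ of $V(G)$ and orient every edge of $G$ from its endpoint in $X$ to its endpoint in $Y$, obtaining an orientation $D$. Then every $u\in X$ satisfies $d^{-}(u)=0$ (since no arc enters $X$) and every $v\in Y$ satisfies $d^{+}(v)=0$. Hence the hypothesis of Theorem \ref{thm 2.5} holds for $D$, so $\gamma_{oso}(D)=n$.

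For the ($\Rightarrow$) direction, suppose $D$ is an orientation of $G$ with $\gamma_{oso}(D)=n$. By Theorem \ref{thm 2.5}, every $v\in V$ satisfies $d^{+}(v)=0$ or $d^{-}(v)=0$. Define
\[
X=\{v\in V:d^{-}(v)=0\},\qquad Y=V\setminus X=\{v\in V:d^{-}(v)>0\}.
\]
For any $v\in Y$, since $d^{-}(v)>0$, the hypothesis forces $d^{+}(v)=0$. Now let $uv$ be any arc of $D$. Then $d^{+}(u)\geq 1$, so $d^{-}(u)=0$ and $u\in X$; similarly $d^{-}(v)\geq 1$, so $v\in Y$. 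Hence every arc of $D$ runs from $X$ to $Y$, which means every edge of $G$ has one endpoint in $X$ and one in $Y$. Thus $(X,Y)$ is a bipartition of $G$ (any isolated vertices may be placed in either class without affecting this), so $G$ is bipartite.

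I do not expect a real obstacle: the only subtlety is cosmetic, namely handling vertices that are isolated in $D$ (belonging to both potential classes) and the degenerate case where $G$ has no edges — both handled trivially by choosing how to distribute such vertices between $X$ and $Y$, using only that $G$ is nontrivial.
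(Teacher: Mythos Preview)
Your proof is correct and follows essentially the same approach as the paper: both directions hinge on Theorem~\ref{thm 2.5}, and the converse is handled identically by orienting all edges from one bipartition class to the other. The only difference is cosmetic: for the forward direction the paper argues by contradiction via an odd cycle, whereas you directly exhibit the bipartition $X=\{v:d^{-}(v)=0\}$, $Y=V\setminus X$ --- a slightly cleaner route to the same conclusion.
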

\begin{proof}
Let $G$ be any graph which has an orientation $D$ such that $\gamma
_{oso}(D)=n.$ If $d^{+}(v)>0$ and $d^{-}(v)>0$ for some $v\in V(D),$ then
Theorem \ref{thm 2.5} implies $\gamma _{oso}(D)\leq n-1,$ a contradiction.
Therefore $d^{+}(v)=0$ or $d^{-}(v)=0$ for all $v\in V(D).$ Suppose $G$ is
not a bipartite graph. Let $C_{2r+1}:(v_{1},v_{2},...,v_{2r+1},v_{1})$ be an
odd cycle in $G.$ Without loss of generality assume that $d^{+}(v_{1})=0,$
then $d^{-}(v_{2})=0,$ $d^{+}(v_{3})=0$ and so on. Hence $d^{+}(v_{i})=0$ if
$i$ is odd and $d^{-}(v_{i})=0$ if $i$ is even for every $v_{i}\in
V(C_{2r+1}), $ it follows that $d^{+}(v_{2r+1})=0,$ which is a contradiction
to $d^{+}(v_{1})=0.$ Therefore $G$ is a bipartite graph.

Conversely, assume that $G$ is a bipartite graph with bipartition $(X,Y)$.
Define the orientation $D$ on $G$ as follows: $d^{+}(v)=0$ for all $v\in X$.
Then $d^{-}(v)=0$ for all $v\in Y,$ so Theorem \ref{thm 2.5} implies $%
\gamma _{oso}(D)=n.$
\end{proof}

\begin{prop} \label{prop 2.9} For the directed path  $P_n=(v_1,v_2,v_3,...,v_n)$ we have
$\gamma_{oso}(P_n)=\lceil \frac {2n}3\rceil.$
\end{prop}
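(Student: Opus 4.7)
My plan is to first translate the OSODS condition on the directed path into a clean combinatorial constraint by applying Corollary \ref{cor 2.2}. In $P_n$ the only in-neighbor of $v_i$ (for $i\geq 2$) is $v_{i-1}$, so if $v_i\notin S$ its only possible defender is $v_{i-1}$. The private-neighbor condition $pn^+(v_{i-1},S)\subseteq N^+[v_i]$ is essentially automatic: the only candidates in $N^+[v_{i-1}]=\{v_{i-1},v_i\}$ to be out-private to $v_{i-1}$ are $v_{i-1}$ itself (which is not, whenever $v_{i-2}\in S$) and $v_i$ (which always lies in $N^+[v_i]$). So the defender condition collapses to requiring $v_{i-2}\in S$, i.e.\ $N^-(v_{i-1})\cap S\neq\emptyset$.

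Next I would extract the forced structure of any OSODS $S$. From Observation \ref{obs 1.99}, $v_1\in S$. Moreover $v_2\in S$: otherwise its unique potential defender $v_1$ would need $N^-(v_1)\cap S\neq\emptyset$, which is impossible since $v_1$ has no in-neighbor. For $i\geq 3$, the previous paragraph yields the clean rule: if $v_i\notin S$ then $v_{i-1}\in S$ and $v_{i-2}\in S$. Equivalently, writing $T=\{i:v_i\notin S\}$, we have $T\subseteq\{3,4,\ldots,n\}$ and any two elements of $T$ differ by at least $3$.

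For the lower bound, this spacing condition forces $|T|\leq \lfloor n/3\rfloor$: if $T=\{a_1<\cdots<a_k\}$, then $a_k\geq a_1+3(k-1)\geq 3k$, so $k\leq\lfloor n/3\rfloor$. Hence $|S|=n-|T|\geq n-\lfloor n/3\rfloor=\lceil 2n/3\rceil$. For the upper bound I would exhibit the set $S=V(P_n)\setminus\{v_{3k}:1\leq k\leq\lfloor n/3\rfloor\}$ of size exactly $\lceil 2n/3\rceil$ and verify directly, using Corollary \ref{cor 2.2}, that it is an OSODS: for each removed $v_{3k}$, the vertex $v_{3k-1}$ lies in $S$, its in-neighbor $v_{3k-2}$ lies in $S$ (here $k=1$ uses $v_1\in S$), and $pn^+(v_{3k-1},S)=\{v_{3k}\}\subseteq N^+[v_{3k}]$.

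The only real obstacle is getting the small-index book-keeping right, particularly the fact that $v_2$ is forced into $S$ even though the "spacing $\geq 3$" rule by itself would permit $v_2\notin S$; this is exactly what makes $T\subseteq\{3,\ldots,n\}$ (rather than $\{2,\ldots,n\}$) in the lower-bound count, and it is what the condition $N^-(u)\cap S\neq\emptyset$ in Proposition \ref{prop 2.1} enforces. Once this is noted, the upper- and lower-bound arguments match exactly and give $\gamma_{oso}(P_n)=\lceil 2n/3\rceil$.
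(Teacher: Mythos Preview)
Your proof is correct and complete, but it takes a genuinely different route from the paper's. For the upper bound you and the paper agree: both exhibit the set $\{v_i:i\equiv 1,2\ (\mathrm{mod}\ 3)\}$. For the lower bound, the paper argues by induction on $n$: given a $\gamma_{oso}$-set $S$, it locates a vertex $v_i\in V\setminus S$ with $i<n$, deletes the arc $v_iv_{i+1}$ to split $P_n$ into $P_i\cup P_{n-i}$, observes that $S$ restricts to OSODSs of the two pieces (since $v_i$ neither out-dominates nor defends anything), and concludes via $\lceil 2i/3\rceil+\lceil 2(n-i)/3\rceil\geq\lceil 2n/3\rceil$.

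Your approach instead extracts a full structural characterization of OSODSs of $P_n$ directly from Corollary~\ref{cor 2.2}: $v_1,v_2\in S$, and for $i\geq 3$, $v_i\notin S$ forces $v_{i-1},v_{i-2}\in S$. This reduces the lower bound to the elementary counting fact that a subset of $\{3,\ldots,n\}$ with minimum gap $3$ has at most $\lfloor n/3\rfloor$ elements. This is cleaner and more informative than the inductive splitting: it actually describes \emph{all} OSODSs of $P_n$, not just the minimum ones, and it avoids the base-case checks for $1\leq n\leq 5$. The paper's inductive method, on the other hand, is a template it reuses for $C_n$ and for the $\gamma_{so}$ and $\gamma_{iso}$ parameters, so it has the advantage of portability. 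Your explicit handling of the boundary issue $v_2\in S$ (via $N^-(v_1)=\emptyset$) is exactly what makes the count give $\lfloor n/3\rfloor$ rather than a looser bound, and you flagged it correctly.
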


\begin{proof}
 It is easy to check that the set $S=\{v_{i}:i\equiv 1,2\ ({ mod}\ 3)\}$ is an OSODS of $P_{n}.$ Therefore, $\gamma_{oso}(P_n)\leq\lceil \frac {2n}3\rceil.$

Now we will prove using induction on $n$ that $\gamma_{oso}(P_n)\geq\lceil \frac {2n}3\rceil.$ The result is obvious for $1\leq n\leq5.$ We assume that the result is true for any  directed path with less than $n$ vertices, and take   the directed path $P_{n}$ with $n\geq 6$ vertices. Let $S$ be a $\gamma_{oso}$-set of $P_{n}.$ Since $\gamma_{oso}(P_n)\leq\lceil \frac {2n}3\rceil,$ there is a vertex  $v_{i}\in V\setminus S$ with $i<n.$ Now, $P_n -v_iv_{i+1}\cong P_i \cup P_{n-i}.$ Consider the sets $S_1=S\cap V(P_i)$ and $S_2=S\cap V(P_{n-i}).$  Since $v_i$ does not out-dominate nor defend any vertex in $P_n,$ it follows that $S_1$ is an OSODS of $P_i$ and $S_2$ is an OSODS of $P_{n-i}$. From the induction hypothesis,   $\left|S_1\right|\geq\left\lceil \frac {2i}3\right\rceil$ and $ \left|S_2\right| \geq\left\lceil \frac {2(n-i)}3\right\rceil$. Hence $\left|S\right| \geq\left\lceil \frac {2(i+n-i)}3\right\rceil=\left\lceil \frac {2n}3\right\rceil,$ which implies that $\gamma_{iso}(P_n)\geq\left\lceil \frac {2n}3\right\rceil$ for every directed path  $P_n.$
\end{proof}

\begin{cor} \label{obs 2.14} For every tournament $T,$ $2\leq \gamma_{oso}(T)\leq \left\lceil\frac{2n}{3}\right\rceil.$
\end{cor}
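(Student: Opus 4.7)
The plan is to establish the two bounds independently. For the lower bound, I would invoke Observation \ref{obs 1.99}: every tournament (on at least two vertices) is a nontrivial digraph without symmetric arcs, since by definition exactly one arc joins each pair of distinct vertices. The observation then immediately gives $\gamma_{oso}(T) \geq 2$, so no extra work is required here.

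For the upper bound, the key idea is to exploit a well-known structural fact about tournaments and combine it with the two results already available for paths. Specifically, by R\'edei's theorem, every tournament $T$ on $n$ vertices contains a directed Hamiltonian path, i.e., a spanning subdigraph isomorphic to the directed path $P_n = (v_1, v_2, \dots, v_n)$. Calling this subdigraph $D'$, Observation \ref{obs 1.7} yields
\begin{equation*}
\gamma_{oso}(T) \leq \gamma_{oso}(D') = \gamma_{oso}(P_n),
\end{equation*}
and then Proposition \ref{prop 2.9} gives $\gamma_{oso}(P_n) = \lceil 2n/3 \rceil$, which closes the argument.

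The proof is therefore essentially a one-line concatenation of three facts already in hand; there is no real obstacle. The only subtlety worth flagging is that the argument uses R\'edei's theorem (which is not proved in the paper), and that the lower bound $\gamma_{oso}(T) \geq 2$ implicitly assumes $n \geq 2$, which is the natural setting for the statement.
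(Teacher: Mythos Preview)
Your proof is correct and follows exactly the same approach as the paper: the upper bound comes from R\'edei's theorem combined with Observation~\ref{obs 1.7} and Proposition~\ref{prop 2.9}, while the lower bound comes from Observation~\ref{obs 1.99}. The paper's version is simply terser, stating only that every tournament contains a directed Hamiltonian path.
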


\begin{proof}
The result follows because every tournament contains a directed hamiltonian path.
\end{proof}

\begin{prop}\label{prop 2.10}
 For the directed cycle  $C_n=(v_1,v_2,v_3,...,v_n,v_1)$ with $n\geq3$ we have
$\gamma_{oso}(C_n)=\lceil \frac {2n}3\rceil.$
\end{prop}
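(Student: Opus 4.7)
The plan is to establish the two inequalities $\gamma_{oso}(C_n) \leq \lceil 2n/3 \rceil$ and $\gamma_{oso}(C_n) \geq \lceil 2n/3 \rceil$ separately. The upper bound will be immediate from earlier results, while the lower bound rests on a local structural analysis around each vertex outside a minimum OSODS.

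For the upper bound, I would observe that the directed path $P_n$ is the spanning subdigraph of $C_n$ obtained by deleting the single arc $v_n v_1$. By Observation \ref{obs 1.7} this gives $\gamma_{oso}(C_n) \leq \gamma_{oso}(P_n)$, and Proposition \ref{prop 2.9} supplies $\gamma_{oso}(P_n) = \lceil 2n/3 \rceil$.

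For the lower bound, let $S$ be a minimum OSODS of $C_n$ and call the vertices in $V(C_n) \setminus S$ \emph{holes}. The heart of the argument is to prove that any two holes are at undirected distance at least $3$ along the cycle. Fix a hole $v_i$, with indices taken modulo $n$. I would then argue that each of $v_{i-2}, v_{i-1}, v_{i+1}, v_{i+2}$ must lie in $S$. First, since $N^-(v_i)=\{v_{i-1}\}$ in $C_n$, out-dominance forces $v_{i-1}\in S$; and since $v_{i-1}$ is the only candidate defender of $v_i$, Proposition \ref{prop 2.1} forces $N^-(v_{i-1})\cap S \neq \emptyset$, i.e., $v_{i-2}\in S$. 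Next, if $v_{i+1}$ were a hole it would have no in-neighbor in $S$ (its only in-neighbor is $v_i \notin S$), contradicting out-dominance. Finally, if $v_{i+2}$ were a hole, its only possible defender would be $v_{i+1}$, but $N^-(v_{i+1})\cap S=\{v_i\}\cap S=\emptyset$ again contradicts Proposition \ref{prop 2.1}. Thus none of $v_{i\pm1}, v_{i\pm2}$ can be a hole.

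With these forbidden configurations ruled out, a straightforward arc-length count around $C_n$ shows that $h:=|V\setminus S|$ holes placed at pairwise cyclic distance at least $3$ satisfy $3h\leq n$, so $h\leq \lfloor n/3\rfloor$ and consequently $|S|\geq n-\lfloor n/3\rfloor=\lceil 2n/3\rceil$. The main, though mild, obstacle is the four-fold case analysis around each hole; each sub-case is resolved by one direct application of Proposition \ref{prop 2.1}, and the small instances $n\in\{3,4,5\}$ need no separate treatment since the same argument goes through uniformly.
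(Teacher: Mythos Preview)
Your argument is correct, but it differs from the paper's. For the upper bound you both invoke Observation~\ref{obs 1.7} and Proposition~\ref{prop 2.9}. For the lower bound, however, the paper does not carry out a local analysis around each hole: it simply picks any $v_i\in V\setminus S$, deletes the arc $v_iv_{i+1}$, and observes that since $v_i$ neither out-dominates nor defends anything, $S$ is still an OSODS of the resulting directed path $P_n$; the bound $\lvert S\rvert\ge\lceil 2n/3\rceil$ then drops out of Proposition~\ref{prop 2.9} immediately. Your route, by contrast, establishes directly via Proposition~\ref{prop 2.1} that each hole forces its four cyclic neighbours $v_{i\pm1},v_{i\pm2}$ into $S$, and then counts. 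The paper's reduction is shorter and reuses the inductive work already done for $P_n$, whereas your argument is self-contained and exposes the exact local constraint (no two holes within distance~$2$) that drives the bound; either way the same value is obtained.
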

\begin{proof}
 From Observation \ref{obs 1.7} and Proposition \ref{prop 2.9},  $\gamma _{oso}(C_{n})~\leq $ $\gamma _{oso}(P_{n})= \left\lceil \frac{2n}{3}\right\rceil.$

For the converse, we proceed as in the proof of Proposition \ref{prop 2.9}: Let $C_{n}$ be the directed cycle with $n\geq 3$ vertices and let $S$ be a $\gamma_{oso}$-set of $C_{n}.$ Since $\gamma _{oso}(C_{n})\leq \left\lceil \frac{2n}{3}\right\rceil,$ there is a vertex  $v_{i}\in V\setminus S.$ We have that $C_n -v_iv_{i+1}\cong P_n,$ and since $v_i$ does not out-dominate nor defend any vertex in $C_n,$ it follows that $S$ is an OSODS of $P_n.$ Therefore, $\gamma _{oso}(C_{n})\geq \gamma _{oso}(P_{n})=\left\lceil \frac{2n}{3}\right\rceil.$
\end{proof}

\begin{prop}\label{prop 2.11}  Let $l(D)$ denote the length of a longest directed path in $D$.  Then
$\gamma_{oso}(D)\leq n-\lfloor\frac {l(D)+1}3\rfloor$ and the bound is sharp.
\end{prop}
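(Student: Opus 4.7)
The plan is to mimic the construction used in Proposition~\ref{prop 2.9} for directed paths, but applied only to a longest directed path $P$ in $D$. Let $P = v_1 v_2 \cdots v_{l+1}$ be a longest directed path in $D$ (so $l = l(D)$), set $T := \{v_i \in V(P) : i \equiv 0 \pmod 3\}$, and put $S := V(D) \setminus T$. Since $|T| = \lfloor (l+1)/3 \rfloor$, we obtain $|S| = n - \lfloor (l+1)/3 \rfloor$, which is the desired cardinality; it then remains to show that $S$ is an OSODS.

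I would verify this using Corollary~\ref{cor 2.2}. Out-domination is immediate: every $v \in V \setminus S = T$ is of the form $v_{3k}$ with $k \geq 1$ and has the path predecessor $v_{3k-1} \in S$ as an in-neighbor. For the defense condition I would propose $u := v_{3k-1}$ as the defender of $v_{3k}$ and check the two requirements of Proposition~\ref{prop 2.1}. The first, $N^-(v_{3k-1}) \cap S \neq \emptyset$, holds via the path arc $v_{3k-2} \to v_{3k-1}$, since $3k-2 \not\equiv 0 \pmod 3$ forces $v_{3k-2} \in S$.

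The main obstacle is the condition $pn^+(v_{3k-1}, S) \subseteq N^+[v_{3k}]$, but it dissolves once one analyses it carefully. Suppose $w \in pn^+(v_{3k-1}, S)$; then $N^-[w] \cap S = \{v_{3k-1}\}$. If $w \in S$ and $w \neq v_{3k-1}$, then $w \in N^-[w] \cap S$ contradicts the singleton condition; if $w = v_{3k-1}$, then we would need $N^-(v_{3k-1}) \cap S = \emptyset$, contradicting the first bullet. Hence $w \in V \setminus S = T$, so $w = v_{3j}$ for some $j \geq 1$. For $j \neq k$ the path arc $v_{3j-1} \to v_{3j}$ gives a second element of $N^-(w) \cap S$, again contradicting the private-neighbor condition. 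Thus the only possibility is $w = v_{3k}$, which trivially lies in $N^+[v_{3k}]$.

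For sharpness I would invoke Proposition~\ref{prop 2.9}: the directed path $P_n$ has $l(P_n) = n - 1$, whence $n - \lfloor n/3 \rfloor = \lceil 2n/3 \rceil = \gamma_{oso}(P_n)$, showing equality is attained for every $n$.
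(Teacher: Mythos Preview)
Your proof is correct and follows essentially the same approach as the paper: both take a longest directed path $P$, use (a minimum) OSODS of $P$ extended by all vertices outside $P$, and cite Proposition~\ref{prop 2.9} for sharpness. The paper simply asserts that $S \cup (V(D)\setminus V(P))$ is ``clearly'' an OSODS of $D$, whereas you spell out the verification via Proposition~\ref{prop 2.1} --- a welcome bit of extra care, but not a different idea.
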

\begin{proof}  Let $P=(v_0,v_1,v_2,...,v_k)$ be a longest directed path in $D.$   Let  $S$ be a minimum  OSODS of  $P$.
Clearly $S_1=S\cup (V(D)\setminus V(P))$ is an OSODS of $D,$ and hence $\gamma_{oso}(D)\leq |S_1|\leq n-\lfloor \frac {l(D)+1}3\rfloor$.
Equality holds obviously for directed paths, among other digraphs.
\end{proof}

The proof of the following proposition is similar.

\begin{prop} \label{prop 2.12} Let $c(D)$ denote the length of a longest directed cycle in $D$.
  Then $\gamma_{oso}(D)\leq n-\lfloor \frac {c(D)}3\rfloor$ and the bound is sharp.
 \end{prop}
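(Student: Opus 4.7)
The plan is to mimic the proof of Proposition \ref{prop 2.11} but substituting a longest directed cycle for a longest directed path, and then using Proposition \ref{prop 2.10} in place of Proposition \ref{prop 2.9}. If $c(D)<3$ then $\lfloor c(D)/3\rfloor=0$ and the inequality $\gamma_{oso}(D)\le n$ is trivial, so I may assume $c(D)\ge 3$. Fix a longest directed cycle $C=(v_0,v_1,\dots,v_{c(D)-1},v_0)$ in $D$, and let $S_C$ be a minimum OSODS of $C$; by Proposition \ref{prop 2.10}, $|S_C|=\lceil 2c(D)/3\rceil$. Set
\[
S_1 \;:=\; S_C\,\cup\,\bigl(V(D)\setminus V(C)\bigr).
\]
A direct calculation (treating the three residues of $c(D)$ modulo $3$) gives
$|S_1|=n-c(D)+\lceil 2c(D)/3\rceil=n-\lfloor c(D)/3\rfloor$, which will be the desired bound once $S_1$ is shown to be an OSODS of $D$.

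The verification splits into two properties. First, out-domination: any $w\in V(D)\setminus S_1$ must lie in $V(C)\setminus S_C$, and $S_C$ out-dominates it already within $C$, so the in-neighbor lies in $S_C\subseteq S_1$. Second, the defense property: given $v\in V(C)\setminus S_C$, let $u\in N^{-}(v)\cap S_C$ be the vertex defending $v$ in $C$ (provided by Corollary \ref{cor 2.2}), and consider $S_1'=(S_1\setminus\{u\})\cup\{v\}$. I will check that $S_1'$ is out-dominating in $D$ by a case split on the vertex $w\in V(D)\setminus S_1'$ to be out-dominated: if $w=u$, then since $u$ is defended in $C$ by $v$, some in-neighbor of $u$ in $C$ lies in $(S_C\setminus\{u\})\cup\{v\}\subseteq S_1'$; if $w\in V(C)\setminus(S_C\cup\{v\})$, the same argument from $C$ gives an in-neighbor in $S_1'$; and if $w\in V(D)\setminus V(C)$, then $w\in S_1\setminus\{u\}\subseteq S_1'$, so no out-domination of $w$ is required. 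This confirms the defense property.

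Sharpness is immediate: for $D=C_n$ with $n\ge 3$, Proposition \ref{prop 2.10} yields $\gamma_{oso}(C_n)=\lceil 2n/3\rceil=n-\lfloor n/3\rfloor$, and here $c(D)=n$, so equality is attained.

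The one subtle point — and the only step that is not a direct translation from the path case — is the verification that the \emph{defender} $u$ chosen in $C$ still works in the larger digraph $D$. The worry is that new in-arcs into $V(C)$ coming from $V(D)\setminus V(C)$ could spoil the condition $pn^{+}(u,S_1)\subseteq N^{+}[v]$ from Proposition \ref{prop 2.1}; but those extra in-neighbors are all in $S_1$, which only \emph{shrinks} $pn^{+}(u,S_1)$ relative to $pn^{+}_{C}(u,S_C)$, and the all-at-once argument above bypasses this technicality by checking out-domination of $S_1'$ directly rather than through the private-neighbor characterization.
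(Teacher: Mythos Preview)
Your proof is correct and follows exactly the approach the paper intends: the paper's own proof of Proposition \ref{prop 2.12} simply states that it is similar to that of Proposition \ref{prop 2.11}, and you have carried out precisely that translation---take a longest directed cycle $C$, a minimum OSODS $S_C$ of $C$ via Proposition \ref{prop 2.10}, and pad with $V(D)\setminus V(C)$. Your careful verification that the defender $u$ chosen within $C$ continues to defend $v$ in $D$ (including the observation that passing to the larger $S_1$ can only shrink out-private neighborhoods) makes explicit what the paper leaves implicit in the word ``clearly'' in the proof of Proposition \ref{prop 2.11}.
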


\begin{prop} \label{thm 2.13} Let $T$ be a tournament of order $n\geq 3.$ If there exists $u\in V(T)$ such that $d^{-}(u)=0,$ then
 $2\leq \gamma_{oso}(T)\leq \left\lceil \log_{2}{(n-1)}  \right\rceil+1$.
\end{prop}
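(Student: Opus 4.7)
The lower bound $\gamma_{oso}(T)\geq 2$ is immediate from Observation \ref{obs 1.99}, since a tournament on $n\geq 3$ vertices is a nontrivial digraph without symmetric arcs.

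For the upper bound, my plan is to exploit the fact that a source $u$ (with $d^-(u)=0$, hence $d^+(u)=n-1$ in a tournament) out-dominates everything and is forced into every OSODS by Observation \ref{obs 1.99}. The key structural observation is that, once $u\in S$, Proposition \ref{prop 2.1} becomes almost trivial to satisfy for any other $v\in S\setminus\{u\}$: the arc $u\to v$ gives $u\in N^-(v)\cap S$, so the first defender condition holds; and since $u$ is an in-neighbor of every vertex $y\ne u$, the set $N^-[y]\cap S$ always contains $u$, hence it can never equal $\{v\}$ when $v\ne u$. Therefore $pn^+(v,S)=\emptyset$, the second defender condition is vacuous, and $v$ defends every $w\in V\setminus S$ with $v\to w$.

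With that in hand, the problem reduces to choosing $v_1,\dots,v_k\in V\setminus\{u\}$ so that every $w\in V\setminus(\{u,v_1,\dots,v_k\})$ is an out-neighbor of some $v_i$. I would build such a set greedily by the classical halving trick for tournaments: put $U_0=V\setminus\{u\}$ and, at each step, pick $v_{i+1}\in U_i$ whose out-degree inside $U_i$ is at least $\lceil(|U_i|-1)/2\rceil$ (such a vertex exists because the sub-tournament on $U_i$ has average out-degree $(|U_i|-1)/2$), then set $U_{i+1}=U_i\setminus(\{v_{i+1}\}\cup N^+(v_{i+1}))$. This yields $|U_{i+1}|\leq\lfloor(|U_i|-1)/2\rfloor$; writing $b_i=|U_i|+1$ turns the recursion into $b_{i+1}\leq\lfloor b_i/2\rfloor$, so $b_k\leq\lfloor n/2^k\rfloor$, and $U_k=\emptyset$ as soon as $k\geq\lfloor\log_2 n\rfloor$. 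A short case check then confirms $\lfloor\log_2 n\rfloor\leq\lceil\log_2(n-1)\rceil$ for every $n\geq 3$ (checking the boundary cases $n\in\{2^j-1,\,2^j,\,2^j+1\}$), so $S=\{u,v_1,\dots,v_k\}$ is an OSODS of $T$ of cardinality at most $\lceil\log_2(n-1)\rceil+1$.

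The one point deserving genuine care is the floor/ceiling bookkeeping that converts the halving recurrence into the precise bound $\lceil\log_2(n-1)\rceil$; beyond that, everything follows cleanly from the source's universal in-neighbor property together with Proposition \ref{prop 2.1}, so I do not foresee a substantive obstacle.
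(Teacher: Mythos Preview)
Your proposal is correct and follows essentially the same approach as the paper: use the source $u$ together with a greedily constructed out-dominating set of the subtournament $T-u$, obtained by the standard halving argument (the paper invokes Fact~2.5 of \cite{NU} rather than carrying out the recursion explicitly). Your justification that $S\cup\{u\}$ is an OSODS via Proposition~\ref{prop 2.1} (namely, that $pn^{+}(v,S\cup\{u\})=\emptyset$ for every $v\neq u$ because $u$ is an in-neighbor of all other vertices) is more explicit than the paper's, which simply asserts this step; and your detour through $b_i=|U_i|+1$ followed by the inequality $\lfloor\log_2 n\rfloor\leq\lceil\log_2(n-1)\rceil$ is a slightly roundabout but valid way to reach the same $\lceil\log_2(n-1)\rceil$ bound that one gets directly from $|U_k|<(n-1)/2^k$.
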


\begin{proof}  Take $u\in V(T)$ such that $d^{-}(u)=0.$ Then $u$ out-dominates the set $V(T)$ and  $u$ belongs to every OSODS of $T$. Let $T_1$ be the   subtournament obtained by deleting $u$ from $T.$ As in the proof of Fact 2.5 of \cite{NU},   since  $\sum\limits_{v\in V(T_1)}d^+(v)= \frac{(n-1)(n-2)}2 ,$ it follows that there exists a vertex $u_1$ in $T_1$ with $d^+(u_1)\geq \left\lceil\frac{n-2}{2}\right\rceil.$ Now, let $T_2=T_1\setminus N^+[u_1]$ and  let $u_2$ be a vertex of $T_2$ which out-dominates at least $\left\lceil\frac{|V(T_2)|}{2}\right\rceil$ vertices of $T_2.$

 By continuing this process
we obtain an out-dominating set $S$ of $T_1$ with  $|S|\leq\left\lceil \log_{2}{(n-1)}  \right\rceil$. Now $S\cup \{u\}$ is an OSODS of $T,$  and hence $\gamma_{oso}(T)\leq \left\lceil \log_2(n-1)\right\rceil+1.$
 \end{proof}

\section{Out-secure dominating sets}

\begin{prop}\label{obs 3.1}
Let $S$ be an OSDS  of a digraph  $D.$ Then a vertex $u\in N^{-}(v)\cap S$ defends a vertex $v\in V\setminus S$ if, and only if, $pn^{+}(u,S)\cup pn^{-}(u,S)\subseteq N^{+}[v]\cup N^{-}[v].$
\end{prop}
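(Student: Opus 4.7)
The plan is to model the proof after Proposition~\ref{prop 2.1}, replacing out-domination of the digraph with ordinary domination of the underlying graph $G$. Set $S'=(S\setminus\{u\})\cup\{v\}$; then $u$ defending $v$ is precisely the assertion that $S'$ is a dominating set of $G$, and the task reduces to translating this property into the private-neighbor containment stated.

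For the sufficiency direction, I would assume $pn^{+}(u,S)\cup pn^{-}(u,S)\subseteq N^{+}[v]\cup N^{-}[v]$ and check that every $z\notin S'$ has a $G$-neighbor in $S'$. The vertices outside $S'$ are $u$ together with $(V\setminus S)\setminus\{v\}$. The vertex $u$ is $G$-adjacent to $v\in S'$ by virtue of the arc $uv$. For any $z\in(V\setminus S)\setminus\{v\}$, since $S$ already dominates $G$ there is some $G$-neighbor of $z$ in $S$; if it differs from $u$ we are done, and otherwise $u$ is the unique $G$-neighbor of $z$ in $S$, which forces $z\in pn^{+}(u,S)\cup pn^{-}(u,S)$, hence $z\in N^{+}[v]\cup N^{-}[v]$, so $v$ serves as a $G$-neighbor of $z$ in $S'$.

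For the necessity direction, assume $u$ defends $v$ and pick $w\in pn^{+}(u,S)\cup pn^{-}(u,S)$. The cases $w=u$ and $w=v$ fall out immediately from $u\in N^{-}(v)\subseteq N^{-}[v]$ and $v\in N^{+}[v]$. Otherwise $w\in V\setminus(S\cup\{v\})$, and since $S'$ dominates $G$, $w$ must have a $G$-neighbor in $S'$. I would then use the private-neighbor hypothesis on $w$ to rule out that this neighbor lies in $S\setminus\{u\}$, leaving $v$ as the only possibility and placing $w\in N^{+}[v]\cup N^{-}[v]$.

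The main obstacle is this last step. Membership of $w$ in $pn^{+}(u,S)$ only forbids $G$-neighbors of $w$ in $S$ reached along arcs into $w$, while $pn^{-}(u,S)$ only forbids those reached along arcs out of $w$. To close the argument I would split into the subcases $w\in pn^{+}(u,S)$ and $w\in pn^{-}(u,S)$, invoking each half of the hypothesis to shut off the relevant half of $w$'s $G$-neighborhood in $S$; the OSDS hypothesis on $S$ together with the minimality of $\{u\}$ in the respective closed neighborhood should then leave $v$ as the only candidate, paralleling the way Proposition~\ref{prop 2.1} could make do with only $pn^{+}(u,S)$ because out-domination involved only in-neighbors.
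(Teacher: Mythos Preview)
Your sufficiency argument is correct and is exactly what the paper does. For necessity you have correctly isolated the crux, and in fact the paper's own proof does not handle it either: it simply asserts that any $w\in pn^{+}(u,S)\cup pn^{-}(u,S)$ ``is not adjacent (in $G$) to any vertex in $S\setminus\{u\}$'', which is the very step you flag as needing justification.

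Your proposed workaround cannot close the gap. Knowing $w\in pn^{+}(u,S)$ says only that $N^{-}[w]\cap S=\{u\}$; it places no restriction whatsoever on $N^{+}(w)\cap S$, so $w$ may reach some $x\in S\setminus\{u\}$ via an out-arc and be $G$-dominated by $S'$ without being adjacent to $v$. Neither the OSDS hypothesis on $S$ nor any ``minimality of $\{u\}$ in the respective closed neighbourhood'' excludes this. Concretely, take $V=\{u,v,w,x\}$, $A=\{uv,\,uw,\,wx,\,xv\}$, and $S=\{u,x\}$. One checks directly that $S$ is an OSDS, that $u\in N^{-}(v)\cap S$ defends $v$ (since $\{x,v\}$ dominates $G$), and that $w\in pn^{+}(u,S)$; yet $w\notin N^{+}[v]\cup N^{-}[v]=\{u,v,x\}$. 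Thus the ``only if'' direction, as stated, is actually false. The condition that genuinely characterises ``$u$ defends $v$'' here is that every vertex whose \emph{unique} $G$-neighbour in $S$ is $u$ must lie in $N^{+}[v]\cup N^{-}[v]$; that set is contained in, but in general strictly smaller than, $pn^{+}(u,S)\cup pn^{-}(u,S)$.
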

\begin{proof}
Let $D=(V,A)$ be a digraph with underlying graph $G$, and let $S$ be an OSDS
of $D.$ Take $v\in V\setminus S$ and $u\in N^{-}(v)\cap S.$

Suppose $(S\setminus \{u\})\cup \{v\}$ is dominating in $G,$ and take $w\in
(pn^{+}(u,S)\cup pn^{-}(u,S))\setminus \{v\}.$ Since $w$ is not adjacent (in $G$) to any
vertex in $S\setminus \{u\},$ it follows that $w$ is adjacent to $v$, that
is, $w\in N^{+}(v)\cup N^{-}(v).$

Now suppose $pn^{+}(u,S)\cup pn^{-}(u,S)\subseteq N^{+}[v]\cup N^{-}[v],$
and take a vertex $w\in V\setminus ((S\setminus \{u\})\cup \{v\}).$ If $w\in
pn^{+}(u,S)\cup pn^{-}(u,S)\cup \{u\},$ it is dominated by $v$. Otherwise, $w
$ has an in-neighbor or an out-neighbor in $S\setminus \{u\}.$ Therefore, $%
(S\setminus \{u\})\cup \{v\}$ is dominating in $G.$
\end{proof}

\begin{cor} \label{cor 3.2}
A set $S\subseteq V$ is an OSDS  of    $D$   if, and only if, for every $v\in V\setminus S,$ there exists $u\in N^{-}(v)\cap S$ such that   $pn^{+}(u,S)\cup pn^{-}(u,S)\subseteq N^{+}[v]\cup N^{-}[v].$
\end{cor}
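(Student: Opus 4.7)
The plan is to derive this corollary directly from Proposition \ref{obs 3.1}, treating each direction in turn. The content of Proposition \ref{obs 3.1} is a local equivalence: for fixed $v \in V\setminus S$ and $u \in N^{-}(v)\cap S$, the statement ``$u$ defends $v$'' (i.e., $(S\setminus\{u\})\cup\{v\}$ is dominating in the underlying graph $G$) is equivalent to $pn^{+}(u,S)\cup pn^{-}(u,S)\subseteq N^{+}[v]\cup N^{-}[v]$. Given this, the corollary is essentially a book-keeping exercise, the only substantive check being that the right-hand condition in the corollary independently forces $S$ to be dominating in $G$.

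For the forward direction, I would take $S$ an OSDS and note that, by definition, for every $v\in V\setminus S$ there is some $u\in N^{-}(v)\cap S$ that defends $v$; Proposition \ref{obs 3.1} applied to this pair $(u,v)$ immediately gives the required private-neighbor containment.

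For the backward direction, I would start from the hypothesis that for every $v\in V\setminus S$ there exists $u\in N^{-}(v)\cap S$ with $pn^{+}(u,S)\cup pn^{-}(u,S)\subseteq N^{+}[v]\cup N^{-}[v]$. The very existence of such a $u$ shows that $v$ has a neighbor in $S$ in the underlying graph, so $S$ is already a dominating set of $G$. Then applying Proposition \ref{obs 3.1} in the other direction to the same pair $(u,v)$ yields that $u$ defends $v$, which completes the verification that $S$ is an OSDS.

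The only mild obstacle is that Proposition \ref{obs 3.1} is phrased under the prior assumption that $S$ is already known to be an OSDS, whereas in the backward direction of the corollary I need to apply it before that conclusion has been reached. Inspection of the proof of Proposition \ref{obs 3.1} shows, however, that neither implication actually uses that ambient hypothesis; the equivalence between ``$u$ defends $v$'' and the private-neighbor inclusion holds for any $S\subseteq V$ and any $u\in N^{-}(v)\cap S$. I would make this remark explicit so that the invocation in the backward direction is unambiguous; no additional calculation is required.
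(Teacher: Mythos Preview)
Your proposal is correct and matches the paper's approach: the paper states Corollary~\ref{cor 3.2} without proof, treating it as an immediate consequence of Proposition~\ref{obs 3.1}, which is exactly what you spell out. Your observation that the backward direction of Proposition~\ref{obs 3.1} only needs $S$ to be dominating in $G$ (a fact you first secure via the existence of $u\in N^{-}(v)\cap S$), rather than the full OSDS hypothesis, is precisely the point one must check, and you handle it correctly.
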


\begin{prop}\label{prop 3.30}Let $G$ be a simple graph. Then there exists an orientation $D$ of $G$ such that $\gamma
_{s}(G)=\gamma _{os}(D).$
\end{prop}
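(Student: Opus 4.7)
The plan is to prove the equality $\gamma_s(G)=\gamma_{os}(D)$ by establishing both inequalities. First I would argue the easy direction $\gamma_s(G)\leq \gamma_{os}(D)$ for \emph{every} orientation $D$ of $G$. This follows from Proposition~\ref{prop 1.8}: since the underlying undirected graph of any orientation $D$ of $G$ is precisely $G$, the inequality $\gamma_s\leq\gamma_{os}$ applied to $D$ reads $\gamma_s(G)\leq\gamma_{os}(D)$. Alternatively, one can invoke Observations~\ref{obs 1.6} and \ref{obs 1.7}: any orientation $D$ is a spanning subdigraph of the symmetric digraph $\widetilde{G}$ obtained by replacing each edge of $G$ with a pair of opposite arcs, for which $\gamma_{os}(\widetilde{G})=\gamma_s(G)$.

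For the reverse inequality, the strategy is to construct a specific orientation $D$ from a minimum SDS of $G$. Fix a $\gamma_s$-set $S$ of $G$. By definition, for each $v\in V(G)\setminus S$ there is a \emph{defender} $u_v\in N_G(v)\cap S$ such that $(S\setminus\{u_v\})\cup\{v\}$ is a dominating set of $G$. Choose one such $u_v$ for every $v\in V\setminus S$. Now define the orientation $D$ of $G$ edge by edge: for each $v\in V\setminus S$ orient the edge $\{u_v,v\}$ as the arc $u_v v$; orient all remaining edges of $G$ arbitrarily.

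I would next verify that this orientation is well defined, i.e.\ that the rule above never forces the same edge to be oriented in two different ways. Any edge of the form $\{u_v,v\}$ produced by the rule has exactly one endpoint ($v$) outside $S$ and one endpoint ($u_v$) inside $S$, so knowing the edge determines which endpoint is $v$; hence the chosen orientation $u_v\to v$ is unique, and no conflict arises. Once the orientation is fixed, $S$ dominates the underlying graph $G$ (it already did), and for every $v\in V\setminus S$ the vertex $u_v$ lies in $N^{-}(v)\cap S$ and satisfies the requirement that $(S\setminus\{u_v\})\cup\{v\}$ dominates $G$. Thus $S$ is an OSDS of $D$, yielding $\gamma_{os}(D)\leq |S|=\gamma_s(G)$, and combined with the first inequality the two numbers agree.

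The only subtle point (and the step I would be most careful about) is confirming that orienting the edges $\{u_v,v\}$ from $u_v$ to $v$ can be done consistently across all $v\in V\setminus S$ simultaneously; everything else is direct from the definitions. Since the endpoints of such an edge always straddle the partition $(S,V\setminus S)$, the construction is unambiguous, so no real obstacle appears and the argument closes cleanly.
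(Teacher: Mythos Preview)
Your proof is correct and follows essentially the same approach as the paper: establish $\gamma_s(G)\leq\gamma_{os}(D)$ via Proposition~\ref{prop 1.8}, then take a $\gamma_s$-set $S$ and orient edges from $S$ into $V\setminus S$ so that $S$ becomes an OSDS. The only cosmetic difference is that the paper orients \emph{all} edges between $S$ and $V\setminus S$ from $S$ to $V\setminus S$ (rather than just the chosen defender edges), which makes the well-definedness check you worried about unnecessary; your more economical orientation works just as well.
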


\begin{proof}Let $G$ be a graph as in the hypothesis. From Proposition
\ref{prop 1.8}, $\gamma _{s}(G)\leq \gamma _{os}(D)$ for every orientation $D$ of $G.$
Conversely, let $S$ be a minimum secure set of $G,$ and consider the
following orientation $D$ of $G$: For every two adjacent vertices $u\in S$
and $v\in V\setminus S,$ give the orientation $uv$ to their common edge;
edges between vertices of $S$ and edges between vertices of $V\setminus S$
can be oriented arbitrarily. Then $S$ is an OSDS of $D$: In $G,$ for every $%
v\in V\setminus S$ there exists $u\in S\cap N(v)$ such that $(S\setminus
\{u\})\cup \{v\}$ is dominating, and $u\in N^{-}(v)$ in $D.$
\end{proof}

\begin{cor} \label{cor 3.32} Let $G$ be a bipartite simple graph with bipartition $(X,Y)$ such that for every $v\in
Y,~d(v)\geq 2.$ Then there exists an orientation $D$ of $G$ such that $X$ is
an OSDS of $D.$
\end{cor}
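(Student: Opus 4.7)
The plan is to construct the orientation explicitly and to verify directly that $X$ satisfies the definition of an OSDS. Since $G$ is bipartite with bipartition $(X,Y)$, both $X$ and $Y$ are independent sets, so every edge of $G$ runs between $X$ and $Y$. I will orient every such edge from $X$ to $Y$, obtaining a digraph $D$ in which $N^{-}(v)=N(v)\subseteq X$ for each $v\in Y$ and $N^{-}(u)=\emptyset$ for each $u\in X$. The hypothesis $d(v)\geq 2$ for every $v\in Y$ will be used crucially at the last step.

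Next I will check that $X$ is dominating in the underlying graph $G$: this is immediate, since every $v\in V\setminus X=Y$ has $d(v)\geq 2\geq 1$ neighbors in $X$. Then, to establish the defending condition of Proposition~\ref{obs 3.1}, I will fix an arbitrary $v\in Y$ and an arbitrary $u\in N(v)\cap X$. By the construction of $D$ we have $u\in N^{-}(v)\cap X$, so the required candidate defender exists. It then remains to check that $S'=(X\setminus\{u\})\cup\{v\}$ dominates $G$.

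To verify this last claim, I will inspect each vertex of $V\setminus S'=(Y\setminus\{v\})\cup\{u\}$. The vertex $u$ is dominated by $v$ because $\{u,v\}\in E(G)$. For every $w\in Y\setminus\{v\}$, the hypothesis $d(w)\geq 2$ gives at least two neighbors of $w$ in $X$, so $N(w)\setminus\{u\}\neq\emptyset$ and hence $w$ has a neighbor in $X\setminus\{u\}\subseteq S'$. This exhausts $V\setminus S'$, so $S'$ is a dominating set of $G$, and consequently $X$ is an OSDS of $D$.

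There is no real obstacle here: the proof is essentially a verification. The only point worth being careful about is recognizing why both parts of the hypothesis are needed: the orientation of every $X$-$Y$ edge from $X$ to $Y$ is what supplies an in-neighbor in $X$ to every $v\in Y$, while the bound $d(v)\geq 2$ is what guarantees that removing one $u\in X$ from the set still leaves every other $w\in Y$ with a dominator in $X\setminus\{u\}$.
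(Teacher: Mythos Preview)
Your proof is correct and follows essentially the same approach as the paper: the same orientation (all edges directed from $X$ to $Y$) and the same underlying verification. The paper's version is simply more compressed, factoring the argument through the observation that $X$ is an SDS of $G$ and invoking the orientation from Proposition~\ref{prop 3.30}, whereas you unfold that verification directly.
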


\begin{proof} The result follows because $X$ is an SDS of $G,$ since every
vertex $v\in Y$ has at least two neighbors in $X.$ As in Proposition \ref{prop 3.30}, we
give to $G$ the orientation $D$ in which $d^{-}(u)=0$ for every $u\in X.$
\end{proof}

\begin{cor} \label{cor 3.33} Let $G$ be a  simple $C_5$-free graph with $\delta\geq2.$  Then there exists an orientation $D$ of $G$ such
that $\gamma _{os}(D)\leq \frac{n}{2}.$
\end{cor}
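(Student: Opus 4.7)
The plan is to use Proposition \ref{prop 3.30} as a bridge: it guarantees an orientation $D$ of $G$ with $\gamma_{os}(D)=\gamma_s(G)$, so the corollary reduces to the purely undirected statement $\gamma_s(G)\le n/2$ whenever $G$ is $C_5$-free and $\delta(G)\ge 2$. Once this undirected bound is in hand, the orientation produced in Proposition \ref{prop 3.30} applied to any secure dominating set of size at most $n/2$ supplies the desired $D$.

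The real work is therefore to bound $\gamma_s(G)$. I would begin with a minimum dominating set $S$ of $G$; by Ore's classical bound ($\gamma(G)\le n/2$ for every graph without isolated vertices), we have $|S|\le n/2$. I would then argue that $S$ can be chosen, or else locally modified, to be secure. By the undirected analogue of Corollary \ref{cor 3.2}, security reduces to the condition that for every $v\in V\setminus S$ some $u\in N(v)\cap S$ satisfies $pn(u,S)\subseteq N[v]$. When this fails at some $v$, two candidates $u_1,u_2\in N(v)\cap S$ each possess a private neighbour $w_i\in V\setminus N[v]$, and the vertices $w_1,u_1,v,u_2,w_2$ lie on a path of length four. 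The hypothesis $\delta\ge 2$ then forces each $w_i$ to have a further neighbour, which leads to a short case analysis: a common neighbour of $w_1$ and $w_2$ inside $S$ contradicts one of the $w_i$ being a private neighbour of $u_i$; an edge $w_1w_2$ closes an induced 5-cycle $w_1u_1vu_2w_2$, contradicting the $C_5$-free hypothesis; and any remaining configuration admits a swap exchanging some $u_j\in S$ for a neighbour of $w_j$, keeping $|S|$ unchanged while strictly reducing the number of undefended vertices.

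The main obstacle is verifying that the swap procedure in the last step is well-defined and terminates: one must check that a single swap does not create new undefended vertices elsewhere and that a suitable potential function (for instance the total size of the external private neighbourhoods of the witnesses of undefended vertices) strictly decreases at each step. The two hypotheses cooperate exactly here: $\delta\ge 2$ provides the extra neighbours required to execute the swap, while $C_5$-freeness rules out the single obstruction in which no legal swap can be made. Once a secure dominating set of $G$ of size at most $n/2$ is produced, Proposition \ref{prop 3.30} converts it into the orientation required by the corollary.
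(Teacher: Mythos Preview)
Your reduction is exactly the paper's: invoke Proposition~\ref{prop 3.30} to get an orientation with $\gamma_{os}(D)=\gamma_s(G)$, so the corollary is equivalent to the undirected bound $\gamma_s(G)\le n/2$ for $C_5$-free graphs with $\delta\ge 2$. The paper does not prove this bound at all; it simply cites it as ``a theorem appearing in \cite{A0}'' (Burger, Henning and van Vuuren, \emph{Vertex covers and secure domination in graphs}). So at the level of the corollary your plan and the paper's coincide.

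Where you diverge is in attempting to reprove the cited theorem, and that sketch has real gaps beyond the termination issue you flag. First, when $v\in V\setminus S$ is undefended you assume two neighbours $u_1,u_2\in N(v)\cap S$, but $\delta(G)\ge 2$ does not force $|N(v)\cap S|\ge 2$; the case $|N(v)\cap S|=1$ must be handled separately and your analysis does not cover it. Second, even when $u_1,u_2,w_1,w_2$ exist and $w_1w_2\in E$, the $5$-cycle $w_1u_1vu_2w_2w_1$ need not be induced: you have not excluded the chords $u_1u_2$, $u_1w_2$, or $u_2w_1$, and if ``$C_5$-free'' means induced-$C_5$-free (as it does in \cite{A0}) a non-induced $5$-cycle gives no contradiction. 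Third, the ``remaining configuration'' swap is not specified, and without a precise exchange rule and a monovariant one cannot claim termination or even that a single swap does not destroy security elsewhere. In short, the bridge via Proposition~\ref{prop 3.30} is correct and is all the paper uses; your attempted self-contained proof of the Burger--Henning--van~Vuuren inequality would need substantially more work to stand on its own.
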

\begin{proof} The result follows from Proposition \ref{prop 3.30} and a theorem appearing in \cite{A0}.
\end{proof}

\begin{prop}\label{prop 3.31} Let $G=(V,E)$ be a simple graph, and let $I$ be an
independent set of vertices (i.e. no two vertices of $I$ are adjacent) such
that for every vertex $v\in I,~d(v)\geq 2.$ Then there exists an orientation
$D$ of $G$ such that $\gamma _{os}(D)\leq |V\setminus I|.$
\end{prop}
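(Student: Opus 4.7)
The plan is to mimic the construction in Proposition \ref{prop 3.30} and Corollary \ref{cor 3.32}, taking $S = V\setminus I$ as the candidate OSDS and orienting the edges so as to make every vertex of $I$ have all its $G$-neighbors as in-neighbors in $D$. Concretely, I would set $N^{-}_D(v)\cap S = N_G(v)$ for every $v\in I$ by orienting each edge between $S$ and $I$ from $S$ to $I$; the edges lying inside $S$ may be oriented arbitrarily, and there are no edges inside $I$ since $I$ is independent.

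Next I would verify that $S$ is an OSDS of $D$. Domination of the underlying graph $G$ is immediate: every $v\in I$ has degree $\geq 2$ and, since $I$ is independent, all its neighbors lie in $S$, so $v$ is dominated. To establish the secure condition I would fix an arbitrary $v\in I$ and any $u\in N_G(v)$ (so $u\in S$ and, by the chosen orientation, $u\in N^{-}_D(v)\cap S$) and show $S' = (S\setminus\{u\})\cup\{v\}$ dominates $G$. The vertex $u$ is dominated by $v\in S'$. Any other $w\in I\setminus\{v\}$ has $d_G(w)\geq 2$ neighbors, all in $S$; removing the single vertex $u$ leaves at least one neighbor of $w$ in $S\setminus\{u\}\subseteq S'$, so $w$ is dominated. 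Hence $S'$ is dominating, which is exactly the secure condition in the underlying graph.

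With $S$ confirmed as an OSDS, I conclude $\gamma_{os}(D)\leq |S| = |V\setminus I|$. There is no real obstacle here; the independence of $I$ makes the secure verification entirely routine, and the hypothesis $d(v)\geq 2$ for $v\in I$ is used precisely to guarantee that removing one neighbor $u$ of $v$ from $S$ still leaves every other vertex of $I$ with a neighbor in $S\setminus\{u\}$. The result can be seen as a direct generalisation of Corollary \ref{cor 3.32}, which is the case where $I = Y$ in a bipartition.
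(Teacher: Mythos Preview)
Your proof is correct and follows exactly the same construction as the paper: orient every edge between $S=V\setminus I$ and $I$ from $S$ to $I$, orient the remaining edges arbitrarily, and check that $S$ is an OSDS. Your verification of the secure condition is in fact more thorough than the paper's, which simply asserts that $(S\setminus\{u\})\cup\{v\}$ is dominating without spelling out the role of $d(w)\ge 2$ for $w\in I\setminus\{v\}$.
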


\begin{proof} Let $G=(V,E)$ and $I$ be as in the hypothesis. We give to $G$
the following orientation $D$: For every $v\in I$ and every $u\in N(v),$ we
asign the arc $uv.$ All other edges are oriented arbitrarily. Then $%
V\setminus I$ is an OSDS of $D,$ since for every $v\in I$ there exists $u\in
(V\setminus I)\cap N^{-}(v)$ such that $(S\setminus \{u\})\cup \{v\}$ is
dominating.
\end{proof}

\begin{theorem} \label{thm 3.3}
Let $D$ be any digraph without symmetric arcs. Then    $$ \gamma_{os}(D)\leq \gamma^{+}(D)+ \gamma^{-}(D).$$
\end{theorem}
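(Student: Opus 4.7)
The plan is to take minimum out- and in-dominating sets and show that their union is automatically an OSDS. Concretely, let $S^{+}$ be a $\gamma^{+}$-set and $S^{-}$ be a $\gamma^{-}$-set of $D$, and set $S=S^{+}\cup S^{-}$, so that $|S|\le \gamma^{+}(D)+\gamma^{-}(D)$. I will argue that $S$ is an OSDS of $D$; the domination part in the underlying graph $G$ is immediate, since any $v\in V\setminus S$ lies outside $S^{+}$ and therefore has an in-neighbor in $S^{+}\subseteq S$, which is a $G$-neighbor of $v$.

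The heart of the matter is the security condition. For $v\in V\setminus S$, I would choose the defender $u\in N^{-}(v)\cap S^{+}$ guaranteed by $S^{+}$ being out-dominating, and try to show that $S'=(S\setminus\{u\})\cup\{v\}$ dominates $G$. Pick any $w\in V\setminus S'$. If $w=u$, then $uv\in A$ makes $u$ adjacent in $G$ to $v\in S'$. The interesting case is $w\ne u$, so $w\in V\setminus S$; here I will exploit both halves of $S$: since $w\notin S^{+}$ there is $u_{1}\in S^{+}\cap N^{-}(w)$ with $u_{1}w\in A$, and since $w\notin S^{-}$ there is $u_{2}\in S^{-}\cap N^{+}(w)$ with $wu_{2}\in A$.

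The key leverage comes from the hypothesis that $D$ has no symmetric arcs: if we had $u_{1}=u_{2}$, then both $u_{1}w$ and $wu_{1}$ would be arcs, which is forbidden. Hence $u_{1}\ne u_{2}$, and therefore at least one of them is different from $u$, giving $w$ a neighbor in $S\setminus\{u\}\subseteq S'$ in the underlying graph. This shows $S'$ is dominating in $G$, so $u$ defends $v$ and $S$ is an OSDS, yielding $\gamma_{os}(D)\le|S|\le\gamma^{+}(D)+\gamma^{-}(D)$.

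The only subtle point — and the one I would double-check carefully — is the case-split on $w$, in particular noticing that the absence of symmetric arcs is used precisely to force $u_{1}\ne u_{2}$ so that one of them survives the removal of $u$. There are no real calculations to grind through; the argument is structural, and the main obstacle is just making sure every $w\in V\setminus S'$ is covered in $G$ after swapping $u$ for $v$, which is exactly what the two-sided covering provided by $S^{+}\cup S^{-}$ buys us.
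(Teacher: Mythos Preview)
Your proof is correct and follows essentially the same approach as the paper: take $S=S^{+}\cup S^{-}$, pick a defender $u\in N^{-}(v)\cap S$, and verify that every $w$ outside $(S\setminus\{u\})\cup\{v\}$ keeps a $G$-neighbor in $S\setminus\{u\}$ by using both the in- and out-covering together with the absence of symmetric arcs. The only cosmetic difference is that the paper organizes the case analysis by whether $w\in N^{+}(u)$ or $w\in N^{-}(u)$, whereas you argue directly that $u_{1}\neq u_{2}$; your version is arguably cleaner and makes the role of the no-symmetric-arcs hypothesis more explicit.
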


\begin{proof}
Let $D$ be a digraph without symmetric arcs and let $G$ be the underlying
undirected graph of $D.$ Let $S^{+}$ and $S^{-}$ be minimum out- and
in-dominating sets of $D,$ respectively. Then $S=S^{+}\cup S^{-}$ is an OSDS
of $D$: Take a vertex $v\in V\setminus S$ and a vertex $u\in N^{-}(v)\cap S.$
The set $(S\setminus \{u\})\cup \{v\}$ is dominating in $G,$ since $v$
dominates $u,$ every vertex in $N^{+}(u)\cap (V\setminus S)$ has an
out-neighbor in $S\setminus \{u\},$ and every vertex in $N^{-}(u)\cap
(V\setminus S)$ has an in-neighbor in $S\setminus \{u\}.$
 \end{proof}

\begin{theorem}
For any digraph $D$ with $n\geq2,$  $\gamma_{os}(D)\leq n-1.$ If $D$ is connected, equality holds if, and only if, $D$ is the directed cycle $C_3$ or the underlying graph $G$ of $D$ is a star.
\end{theorem}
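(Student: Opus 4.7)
The plan is to treat the upper bound and then the two directions of the equality characterisation, with the reverse direction being the main difficulty.

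For the upper bound, I would pick any arc $xy\in A$ of $D$ (which certainly exists whenever $D$ has at least one arc, in particular whenever $D$ is connected with $n\geq 2$) and verify that $V\setminus\{y\}$ is an OSDS: it dominates $G$ because the only outside vertex $y$ has the neighbour $x$ in the set, and $y$ is defended via $x\in N^{-}(y)$, since the altered set $V\setminus\{x\}$ still dominates $G$ (the outside vertex $x$ has the neighbour $y$). Hence $\gamma_{os}(D)\leq n-1$.

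For the ``if'' direction of the equality, I would handle the two cases directly. For $D=C_3=(v_1,v_2,v_3,v_1)$, the set $\{v_1,v_2\}$ is an OSDS ($v_3$ is defended by its unique in-neighbour $v_2$), while no singleton $\{v_i\}$ is an OSDS, since $v_{i-1}$ has its only in-neighbour $v_{i-2}=v_{i+1}\notin\{v_i\}$; hence $\gamma_{os}(C_3)=2=n-1$. For a star $G=K_{1,n-1}$ with centre $c$ and any orientation, I would split a candidate OSDS $S$ according to whether $c\in S$. If $c\notin S$, the leaves in $V\setminus S$ have no neighbour in $S$, so $V\setminus S=\{c\}$ and $|S|=n-1$. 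If $c\in S$ and two leaves $\ell,\ell'$ lie outside $S$, the only possible defender of $\ell$ is $c$, but then $(S\setminus\{c\})\cup\{\ell\}$ fails to dominate $\ell'$. Either way $|V\setminus S|\leq 1$, so $\gamma_{os}(D)\geq n-1$.

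For the ``only if'' direction, assume $D$ is connected, $n\geq 2$, $\gamma_{os}(D)=n-1$, and $G$ is not a star; the goal is $D=C_3$. Since $G$ is connected and not a star, there exists an edge $uv\in E(G)$ with both endpoints of degree $\geq 2$ in $G$ (the first edge of a shortest path between two vertices of degree $\geq 2$ has this property). A direct computation shows that $V\setminus\{u,v\}$ dominates $G$ automatically and is an OSDS unless
\[
N^{-}(u)\subseteq\{v\} \quad \text{or} \quad N^{-}(v)\subseteq\{u\},
\]
because any $x\in N^{-}(u)\setminus\{v\}$ defends $u$: the modified set $V\setminus\{v,x\}$ dominates $v$ through its second neighbour in $G$ and dominates $x$ through $u$. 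Applying this to every edge of $G$ with both endpoints of degree $\geq 2$, and similarly analysing non-adjacent removable pairs, I would close by a case analysis that forces $n=3$, $G=K_3$, and the cyclic orientation, i.e.\ $D=C_3$.

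The main obstacle is this concluding case analysis. Imposed on every admissible edge, the displayed restriction produces a rigid cascade of in-neighbourhood confinements that must be shown incompatible with any connected non-star digraph on $n\geq 4$ vertices. The leverage I would exploit is the presence of a $P_4$ in any such $G$; together with the cascade, this should always permit exhibiting a concrete pair $\{u',v'\}$ whose removal is an OSDS, contradicting $\gamma_{os}(D)=n-1$. Subcases arise depending on whether the problematic endpoints of $uv$ are sources of $D$ or have a unique in-neighbour inside $\{u,v\}$, and the orientations of the surrounding arcs have to be tracked carefully in each.
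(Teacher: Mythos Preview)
Your treatment of the upper bound and the ``if'' direction is correct and in fact more self-contained than the paper's, which invokes an external result (Proposition~10 of \cite{E3}) to handle the star case.

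For the ``only if'' direction your approach is genuinely different from the paper's, and the part you flag as the obstacle is precisely where the two diverge. You restrict attention to edges of $G$ whose endpoints both have degree $\geq 2$ and derive the in-neighbourhood constraint $N^{-}(u)\subseteq\{v\}$ or $N^{-}(v)\subseteq\{u\}$; this is correct, but you then face a case analysis over orientations along a $P_4$ that you do not carry out, and which would be somewhat laborious (you also allude to ``non-adjacent removable pairs'' without saying what the analogous restriction is). The paper avoids this by structuring the argument in two cleaner steps. First, it shows that $G$ must contain a universal vertex: if $u,v$ are non-adjacent, pick $w$ adjacent to $u$ and $x\neq w$ adjacent to $v$; then, depending on the orientations of the arcs between $u,w$ and between $v,x$, one of $V\setminus\{w,x\}$, $V\setminus\{x,u\}$, $V\setminus\{v,w\}$, $V\setminus\{v,u\}$ is an OSDS of size $n-2$. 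Second, with a universal vertex $v$ in hand, it shows $V\setminus\{v\}$ is independent: if $uw\in A$ with $u,w\neq v$, take any $x\in V\setminus\{u,v,w\}$ (which exists since $n\geq 4$) and check that either $V\setminus\{x,w\}$ or $V\setminus\{v,w\}$ is an OSDS. This two-step decomposition (universal vertex, then independence) sidesteps the orientation cascade entirely and keeps the case analysis to a handful of explicit two-vertex deletions.
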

\begin{proof}
Since $D=(V,A)$ has no isolated vertices, there exists at least one vertex $v\in V$ such that $d^-(v)\geq 1$. Therefore, $V\setminus \{v\}$ is an OSDS of $D.$

Let $D=(V,A)$ be a digraph such that $\gamma_{os} (D)=n-1.$ It is easy to check the result for $n\leq3.$ Moreover, from Proposition \ref{prop 1.8}, $ \gamma_{s}(G)\leq\gamma_{os}(D);$ therefore, Proposition 10 of \cite{E3} implies that for every digraph $D$ such that $G$ is a star, $\gamma_{os}(D)=n-1.$

 Now assume $n\geq4.$ Then there is a vertex $v\in V$ such that all vertices in $V\setminus \{v\}$ are adjacent to $v.$ Otherwise, there exists $\{u,v\}\subseteq V$ such that $u$ and  $v$ are not adjacent. Since $D$ is connected,  there exists $w\in V$ such that $wu\in A$ or $uw\in A.$ Since $n\geq4,$ then there exists $x\in V\setminus \{w\}$ such that $vx\in A$ or $xv\in A.$ It follows that $V\setminus \{w,x\},$ $V\setminus \{x,u\},$  $V\setminus \{v,w\},$ or $V\setminus \{v,u\},$ is a an OSDS of $D$ (for example, if $uw\in A$ and $vx\in A$, then $V\setminus \{w,x\}$ is an OSDS), which is a contradiction. Therefore, there exists one vertex $v$  such that all  vertices in $V\setminus \{v\}$ are adjacent to $v.$

Now we show that   all  vertices in $V\setminus \{v\}$ are independent. Consider  $\{u,w\} \subseteq V\setminus \{v\}$ such that $uw\in A.$  Since $n\geq4,$ there exists $x\in V\setminus \{u,w\}$ such that $vx\in A$ or $xv\in A.$ This implies that $V\setminus \{x,w\}$ or  $V\setminus \{v,w\}$ is an OSDS of $D,$ which is a contradiction.
\end{proof}

\begin{prop} \label{prop 3.5} Let $P_n=(v_1,v_2,v_3,...,v_n)$ be a directed path.    Then
$\gamma_{os}(P_n)=\lceil \frac {n}2\rceil.$
\end{prop}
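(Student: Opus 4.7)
The plan is to prove both inequalities $\gamma_{os}(P_n) \leq \lceil n/2 \rceil$ and $\gamma_{os}(P_n) \geq \lceil n/2 \rceil$ separately. The lower bound should fall out immediately from the machinery already built up in the paper, while the upper bound requires exhibiting an explicit OSDS of the right size.

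For the \textbf{lower bound}, I would simply invoke Proposition \ref{prop 1.8} and Observation \ref{obs 1.9}: since every OSDS is out-dominating, $\gamma^{+}(P_n) \leq \gamma_{os}(P_n)$, and Observation \ref{obs 1.9} gives $\gamma^{+}(P_n) = \lceil n/2 \rceil$. Thus $\gamma_{os}(P_n) \geq \lceil n/2 \rceil$ with no further work. This is attractive because it avoids any inductive argument analogous to Proposition \ref{prop 2.9}.

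For the \textbf{upper bound}, I would exhibit the set $S = \{v_i : i \text{ odd}\}$, which clearly has cardinality $\lceil n/2 \rceil$. I need to verify two things: that $S$ is dominating in the underlying graph (trivial, since every even-indexed $v_{2k}$ is adjacent to the odd-indexed $v_{2k-1} \in S$), and that each $v_{2k} \in V \setminus S$ is defended by some in-neighbor in $S$. The only in-neighbor of $v_{2k}$ in $P_n$ is $v_{2k-1}$, which lies in $S$, so $v_{2k-1}$ is the unique candidate defender. I then check that $(S \setminus \{v_{2k-1}\}) \cup \{v_{2k}\}$ dominates the underlying graph: the removed vertex $v_{2k-1}$ is dominated by $v_{2k}$, while $v_{2k-2}$ (if it exists) is still dominated by $v_{2k-3} \in S$, and all other vertices retain the domination they had from $S$. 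A brief case split on whether $n$ is even or odd (to handle whether $v_n$ is in $S$ and to handle the boundary vertices $v_1, v_2$) finishes the verification.

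I do not anticipate a serious obstacle: the upper-bound construction is the natural odd-index set used in domination on paths, and the only subtlety is checking defense at the endpoints, which is purely a small finite check. The only thing to be careful about is confirming that the in-neighbor used as defender actually exists in $S$; here, directionality forces $v_{2k-1}$ to be the sole choice, but that vertex is in $S$ by construction, so this causes no trouble.
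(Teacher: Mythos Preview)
Your proposal is correct and follows essentially the same approach as the paper: the lower bound via Proposition~\ref{prop 1.8} and Observation~\ref{obs 1.9}, and the upper bound via the odd-indexed set $S=\{v_i:i\equiv 1\ (\mathrm{mod}\ 2)\}$. The paper's proof is terser, simply asserting that $S$ is an OSDS, whereas you spell out the defense verification explicitly.
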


\begin{proof}  From Proposition \ref{prop 1.8} we have $\gamma^{+}(D)\leq \gamma_{os}(D),$ so it follows from Observation \ref{obs 1.9} that    $\lceil \frac {n}2\rceil\leq \gamma_{os}(P_n).$  Further, it is easy to check that $S=\{v_i: i\equiv 1\ {\rm(mod }$2$)\}$  is an OSDS of $P_n.$
\end{proof}

\begin{prop}\label{prop 3.6}
  Let $C_n=(v_1,v_2,v_3,...,v_n,v_1)$ be a directed cycle with $n\geq 3.$    Then
$\gamma_{os}(C_n)=\lceil \frac {n}2\rceil.$
\end{prop}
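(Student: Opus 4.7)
The plan is to sandwich $\gamma_{os}(C_n)$ between $\lceil n/2\rceil$ from both sides, using only results already established earlier in the paper; this mirrors the strategy employed for the OSODS case in Proposition \ref{prop 2.10}.

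For the upper bound, I would observe that the directed path $P_n=(v_1,v_2,\ldots,v_n)$ is obtained from $C_n$ by deleting the single arc $v_nv_1$, hence $P_n$ is a spanning subdigraph of $C_n$. Observation \ref{obs 1.7} then yields $\gamma_{os}(C_n)\leq \gamma_{os}(P_n)$, and Proposition \ref{prop 3.5} (just proved) gives $\gamma_{os}(P_n)=\lceil n/2\rceil$, producing $\gamma_{os}(C_n)\leq \lceil n/2\rceil$.

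For the lower bound, I would apply the inequality $\gamma^{+}(D)\leq \gamma_{os}(D)$ from Proposition \ref{prop 1.8} to $D=C_n$, and combine it with $\gamma^{+}(C_n)=\lceil n/2\rceil$ from Observation \ref{obs 1.9}. This directly gives $\lceil n/2\rceil \leq \gamma_{os}(C_n)$, and together with the previous bound the desired equality follows.

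There is essentially no hard step, since the technical content has already been absorbed into Proposition \ref{prop 3.5} and Observation \ref{obs 1.9}. If one preferred a direct verification of the upper bound rather than invoking Observation \ref{obs 1.7}, the natural candidate $S=\{v_i:i\text{ odd},\ 1\leq i\leq n\}$ of size $\lceil n/2\rceil$ works: it is out-dominating because every $v_{2k}\in V\setminus S$ has in-neighbor $v_{2k-1}\in S$, and the defender $u=v_{2k-1}$ succeeds because after the swap $v_{2k-1}$ is dominated by $v_{2k}$ in the underlying undirected cycle, while the only other vertex whose domination might fail is $v_{2k-2}$, whose remaining cyclic neighbor $v_{2k-3}$ still lies in $S\setminus\{v_{2k-1}\}$ whenever $n\geq 3$.
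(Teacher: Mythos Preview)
Your proof is correct and follows essentially the same route as the paper: the paper's one-line proof invokes Observations \ref{obs 1.7} and \ref{obs 1.9} together with Proposition \ref{prop 3.5}, which is exactly your sandwich argument (the inequality $\gamma^{+}\leq\gamma_{os}$ from Proposition \ref{prop 1.8} that you cite explicitly is the implicit link between Observation \ref{obs 1.9} and the lower bound). Your optional direct verification of the upper bound is an additional detail not present in the paper but is also sound.
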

\begin{proof}
From Observations \ref{obs 1.7} and \ref{obs 1.9}, and Proposition \ref{prop 3.5} the result follows.
\end{proof}

\begin{prop} \label{prop 3.7}
 Let $l(D)$ denote the length of a longest directed path in $D$.  Then
$\gamma_{os}(D)\leq n-\lfloor\frac {l(D)+1}2\rfloor$ and the bound is sharp.
\end{prop}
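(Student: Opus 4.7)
The plan is to follow the template of Proposition~\ref{prop 2.11}, with Proposition~\ref{prop 3.5} playing the role that Proposition~\ref{prop 2.9} played there. First I would fix a longest directed path $P=(v_0,v_1,\ldots,v_k)$ in $D$, so that $k=l(D)$ and $P$ has $l(D)+1$ vertices; by Proposition~\ref{prop 3.5}, $P$ admits a minimum OSDS $S$ of size $\lceil (l(D)+1)/2\rceil$.

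Next I would set $S_1 = S \cup (V(D)\setminus V(P))$ and argue that $S_1$ is an OSDS of $D$. Every vertex of $V(D)\setminus S_1$ lies in $V(P)\setminus S$; for any such $v$, the OSDS property inside $P$ gives an in-neighbor $u\in N^{-}(v)\cap S$ that defends it in $P$, and this $u$ is also an in-neighbor of $v$ in $D$. The set $(S\setminus\{u\})\cup\{v\}$ dominates $V(P)$ in the underlying graph of $P$, and since every vertex of $V(D)\setminus V(P)$ was placed into $S_1$, the enlarged set $(S_1\setminus\{u\})\cup\{v\}$ dominates $V(D)$ in the underlying graph of $D$. A quick count then yields $|S_1|=\lceil (l(D)+1)/2\rceil+n-(l(D)+1)=n-\lfloor(l(D)+1)/2\rfloor$, giving the desired upper bound on $\gamma_{os}(D)$.

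For sharpness I would observe that the directed path $P_n$ itself has $l(P_n)=n-1$, so the bound specializes to $n-\lfloor n/2\rfloor=\lceil n/2\rceil$, which matches $\gamma_{os}(P_n)$ by Proposition~\ref{prop 3.5}; hence equality holds for every directed path (and, using Proposition~\ref{prop 3.6}, for every directed cycle as well).

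The only subtle step—and the one I would double-check—is that the OSDS property of $S$ inside $P$ really does lift to an OSDS of the ambient digraph $D$. The potential concern is that chord arcs in $D$ not present in $P$ could enlarge $N^{-}(v)$ or $N^{+}(u)$; however, additional in-neighbors of $v$ in $D$ do not disturb the fact that the original $u$ still defends $v$, and any vertex outside $V(P)$ is automatically a member of $S_1$ and so needs no domination. So the lift is painless, and the argument is a near-verbatim copy of the OSODS version, with the exponent $3$ in the denominator replaced by $2$.
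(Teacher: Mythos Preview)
Your proposal is correct and follows precisely the approach indicated by the paper, which simply says the result follows from Proposition~\ref{prop 3.5} in the same way Proposition~\ref{prop 2.11} follows from Proposition~\ref{prop 2.9}. You have faithfully spelled out that template---take a longest directed path, a minimum OSDS on it, pad with all remaining vertices, and verify the lift works---and your sharpness example via $P_n$ matches the paper's.
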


\begin{proof}
 The result follows from Proposition \ref{prop 3.5} in a similar way as Proposition \ref{prop 2.11} follows from Proposition \ref{prop 2.9}.
\end{proof}

\begin{prop} \label{prop 3.8}
 Let $c(D)$ denote the length of a longest directed cycle in $D$.
  Then $\gamma_{os}(D)\leq n-\lfloor \frac {c(D)}2\rfloor$ and the bound is sharp.
 \end{prop}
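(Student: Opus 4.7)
The plan is to mimic the structure used in Proposition 2.12 (which follows Proposition 2.10 by adding all off-cycle vertices to a minimum OSODS of a longest directed cycle), using Proposition 3.6 in place of Proposition 2.10. So first I would fix a longest directed cycle $C$ in $D$ of length $c=c(D)$, listed as $(v_1,v_2,\ldots,v_c,v_1)$, and invoke Proposition 3.6 to obtain a minimum OSDS $S_0$ of $C$ with $|S_0|=\lceil c/2\rceil$. The candidate set is
\[
S \;=\; S_0 \,\cup\, \bigl(V(D)\setminus V(C)\bigr),
\]
whose cardinality is $\lceil c/2\rceil + (n-c) = n - \lfloor c/2\rfloor$, yielding the desired numerical bound as soon as $S$ is shown to be an OSDS of $D$.

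Next I would verify the OSDS property. Any $v\in V(D)\setminus S$ lies in $V(C)\setminus S_0$, so because $S_0$ is in particular an out-dominating set of $C$, there exists $u\in N^-(v)\cap S_0\subseteq N^-(v)\cap S$. Since $S_0$ is a secure out-dominating set (in the OSDS sense) of $C$, the set $(S_0\setminus\{u\})\cup\{v\}$ dominates $V(C)$ in the underlying undirected graph of $C$, and therefore also in the underlying graph of $D$, which contains the former as a spanning subgraph on $V(C)$. The remaining vertices, namely those in $V(D)\setminus V(C)$, all lie in $(S\setminus\{u\})\cup\{v\}$ because $u\in V(C)$. Hence $(S\setminus\{u\})\cup\{v\}$ dominates all of $V(D)$, so $u$ defends $v$ in $D$ and $S$ is an OSDS.

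For sharpness I would simply take $D=C_n$ with $n\geq 3$: then $c(D)=n$ and Proposition 3.6 gives $\gamma_{os}(C_n)=\lceil n/2\rceil = n-\lfloor n/2\rfloor$, so equality holds. The main (very minor) obstacle is making sure that the defense of $v$ by $u$ transfers correctly from $C$ to $D$; the point to emphasize is that the condition in Corollary 3.2 involves the underlying \emph{undirected} neighborhoods, and possible additional arcs of $D$ incident to $V(C)$ only enlarge $N^+[v]\cup N^-[v]$ and thus only help the defense, while the off-cycle vertices are placed in $S$ to prevent any new out-private neighbors of $u$ from escaping into $V(D)\setminus V(C)$.
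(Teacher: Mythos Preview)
Your proof is correct and follows exactly the approach the paper intends: take a minimum OSDS of a longest directed cycle (via Proposition~3.6), add all off-cycle vertices, and verify the OSDS property transfers to $D$; sharpness is witnessed by directed cycles. The paper's own proof is just the one-line remark that the result follows from Proposition~3.6 in the same way Proposition~2.12 follows from Proposition~2.10, so you have simply written out those details.

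One small phrasing issue: you first pick $u\in N^-(v)\cap S_0$ using out-domination and only afterwards invoke the OSDS property to conclude that $(S_0\setminus\{u\})\cup\{v\}$ is dominating. In general not every in-neighbor in $S_0$ need be a defender, so the correct logic is to choose $u$ directly as a defender guaranteed by the OSDS property. Here this makes no difference because in a directed cycle each vertex has a unique in-neighbor, but it would be cleaner to say so or to invoke the OSDS property from the outset.
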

 \begin{proof}
  The result follows from Proposition \ref{prop 3.6} in a similar way as Proposition \ref{prop 2.12} follows from Proposition \ref{prop 2.10}.
\end{proof}

\begin{prop} Let $T$ be a tournament. Then $\gamma_{os}(T)=1$ if, and only if, there exist $u\in V$ such that $d^-(u)=0.$
\end{prop}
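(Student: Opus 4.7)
The plan is to exploit two features of a tournament: first, its underlying graph $G$ is complete, so \emph{any} nonempty subset of $V$ is a dominating set of $G$; second, between any two vertices there is exactly one arc. With these in hand, the OSDS axioms for a singleton collapse almost entirely to a statement about the arc direction at a single vertex, so both directions of the equivalence should follow by a direct argument.

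For the easy direction $(\Leftarrow)$, I would take a vertex $u$ with $d^-(u)=0$ and verify that $S=\{u\}$ is an OSDS. Since $T$ is a tournament, for every $v\neq u$ there is an arc between $u$ and $v$, and it must be $uv\in A$ (otherwise $v\in N^-(u)$). Hence $u\in N^-(v)$ for every $v\in V\setminus S$, which makes $S$ out-dominating and therefore dominating in the complete underlying graph $G$. To verify the defense condition, note that $(S\setminus\{u\})\cup\{v\}=\{v\}$, and a single vertex dominates $G$ because $G$ is complete. Thus $S$ is an OSDS and $\gamma_{os}(T)\leq 1$; since $S\neq\emptyset$ is forced, $\gamma_{os}(T)=1$.

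For the other direction $(\Rightarrow)$, I would start from a hypothetical OSDS $S=\{u\}$ and read off what the defense condition forces at $u$. The axiom requires that for each $v\in V\setminus\{u\}$ there exists a vertex in $N^-(v)\cap S$; the only candidate in $S$ is $u$ itself, so $u\in N^-(v)$, i.e., $uv\in A$, for every $v\neq u$. In tournament language this means $d^+(u)=n-1$ and consequently $d^-(u)=0$, which is precisely the required vertex.

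There is no real obstacle here: the proposition is essentially a sanity check that domination, out-domination, and the defense condition all degenerate for $|S|=1$ in a tournament. The only point requiring a moment of care is the defense clause in the $(\Leftarrow)$ direction, where one must recall that the replacement set $\{v\}$ has to dominate the \emph{underlying undirected} graph rather than being out-dominating, and this is automatic because the underlying graph of a tournament is $K_n$.
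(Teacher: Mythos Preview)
Your proof is correct and follows essentially the same approach as the paper. The paper's argument is more terse and cites Proposition~\ref{obs 3.1} to conclude that the defender $u$ of $v$ must lie in $N^-(v)$, but this is already immediate from the definition of an OSDS, exactly as you observe; your added remark that the replacement set $\{v\}$ dominates the underlying complete graph $K_n$ makes the $(\Leftarrow)$ direction more explicit than the paper's one-line ``then $\{u\}$ is an OSDS of $T$''.
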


\begin{proof} Suppose $\gamma_{os}(T)=1$ and let $S=\{u\}$ be an OSDS of $T.$ Let $v\in V\setminus \{u\}.$  If $v$ is defended by $u,$ Proposition \ref{obs 3.1} implies that $u\in N^-(v).$ Hence $d^-(u)=0.$ Conversely, if $d^-(u)=0,$ then $\{u\}$ is an OSDS of $T$ and hence $\gamma_{os}(T)=1.$
\end{proof}

\begin{cor}\label{cor 3.9} If $T$ is a transitive tournament, then $\gamma_{os}(T)=1.$
\end{cor}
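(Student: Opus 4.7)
The plan is to deduce this immediately from the preceding proposition, which states that $\gamma_{os}(T)=1$ if and only if $T$ has a vertex of in-degree zero. So the only content left is to exhibit such a vertex in any transitive tournament.

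First I would recall the definition: a tournament $T$ on vertex set $V$ is transitive if whenever $uv, vw \in A$, we have $uw \in A$. The standard fact I would use is that the relation ``$u \prec v$ iff $uv \in A$'' is then a strict total order on $V$, so $V$ admits a linear ordering $v_1, v_2, \ldots, v_n$ with $v_i v_j \in A$ exactly when $i < j$. In particular, the minimum element $v_1$ of this order satisfies $N^{-}(v_1) = \emptyset$, i.e.\ $d^{-}(v_1) = 0$. If I preferred not to invoke this structure theorem, I could instead argue directly: any tournament has a vertex of maximum out-degree, and a simple transitivity argument shows that in a transitive tournament such a vertex out-dominates all others, hence has in-degree zero.

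Having produced $u = v_1$ with $d^{-}(u)=0$, I would invoke the preceding proposition to conclude $\gamma_{os}(T)=1$. No further work is needed, and there is no genuine obstacle — the claim really is just a corollary of the previous proposition together with the observation that transitive tournaments have sources.
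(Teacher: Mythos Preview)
Your proposal is correct and matches the paper's intent exactly: the corollary is stated without proof in the paper, as an immediate consequence of the preceding proposition together with the standard fact that a transitive tournament has a source. Your explanation of why such a source exists is more detailed than anything the paper provides, but the approach is identical.
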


\begin{prop} \label{thm 3.10} Let $T$ be a tournament of order $n\geq 2,$  then
 $\gamma_{os}(T)\leq \left\lceil \log_{2}{n}  \right\rceil.$
\end{prop}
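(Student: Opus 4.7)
The plan is to reduce the statement to the classical bound on the out-domination number of a tournament.

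The key observation is that since $T$ is a tournament, its underlying undirected graph is the complete graph $K_n$; hence every non-empty subset of $V$ dominates $K_n$. Consequently, for any non-empty $S \subseteq V$, the two domination requirements in the definition of an OSDS become automatic: $S$ dominates $K_n$, and for any $u \in S$ and $v \in V \setminus S$ the swapped set $(S \setminus \{u\}) \cup \{v\}$ is still non-empty and hence also dominates $K_n$. The only condition left is that for every $v \in V \setminus S$ there exists $u \in N^-(v) \cap S$, i.e., that $S$ is out-dominating. Therefore
\[
\gamma_{os}(T) \;=\; \gamma^+(T),
\]
and it suffices to prove $\gamma^+(T) \leq \lceil \log_2 n \rceil$.

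For the latter I would use the standard halving argument already invoked in the proof of Proposition \ref{thm 2.13}. Since $\sum_{v \in V(T)} d^+(v) = \binom{n}{2}$, there exists $u_0 \in V(T)$ with $d^+(u_0) \geq \lceil (n-1)/2 \rceil$. Setting $T_1 := T \setminus N^+[u_0]$ gives $|V(T_1)| \leq \lfloor (n-1)/2 \rfloor$. One iterates: at step $k$, pick $u_k \in V(T_k)$ of maximum out-degree and put $T_{k+1} := T_k \setminus N^+[u_k]$, so that $|V(T_{k+1})| \leq \lfloor (|V(T_k)| - 1)/2 \rfloor$.

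Writing $m := \lceil \log_2 n \rceil$, a short induction shows $|V(T_k)| \leq 2^{m-k} - 1$ for $1 \leq k \leq m$; in particular $T_m = \emptyset$. Hence $\{u_0, u_1, \ldots, u_{m-1}\}$ is an out-dominating set of $T$ of size $m = \lceil \log_2 n \rceil$, and by the reduction above it is also an OSDS. There is no genuine obstacle beyond noticing that OSDS coincides with out-domination on tournaments; the rest is the classical greedy-halving induction, and the only mildly delicate point is keeping the ``minus one'' in the recurrence so that the bound stays at $\lceil \log_2 n \rceil$ even when $n$ is a power of $2$.
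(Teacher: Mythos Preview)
Your argument is correct and follows essentially the same route as the paper: the paper cites Fact~2.5 of \cite{NU} to obtain an out-dominating set $S$ with $|S|\le\lceil\log_2 n\rceil$ and then remarks that ``it is straightforward that $S$ is an OSDS of $T$,'' which is exactly your observation that in a tournament the underlying graph is $K_n$, so the domination requirements in the OSDS definition are automatic and only the out-domination condition survives. You spell this out more explicitly (even noting the equality $\gamma_{os}(T)=\gamma^+(T)$, which the paper does not state) and you reproduce the greedy halving argument rather than cite it, but the underlying strategy is identical.
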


\begin{proof} From Fact 2.5 of \cite{NU}, there is an out-dominating set $S$ such that $|S|\leq\left\lceil \log_{2}{n}  \right\rceil.$  It is straightforward that $S$ is an OSDS of $T,$  and hence $\gamma_{os}(T)\leq \left\lceil \log_2n\right\rceil.$
 \end{proof}

\begin{dfn}{\rm\cite{char}} Let $D=(V, A)$ be a digraph.  A subset $S$ of $V$ is called a twin dominating set of $D$
if for every vertex $v\in V\setminus S$, there exist two vertices $u_1,u_2\in S$ $(u_1$ and $u_2$ may possibly coincide$)$ such that
$(v, u_1)$ and $(u_2, v)$ are arcs in $D$. The minimum cardinality of a twin dominating set in $D$ is called the twin domination number of $D$ and is denoted by $\gamma^*(D).$
\end{dfn}

\begin{prop}
For any digraph $D$ without symmetric arcs,  $\gamma_{os}(T)\leq \gamma^*(D).$
\end{prop}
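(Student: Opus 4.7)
The plan is to show that any minimum twin dominating set $S$ of $D$ is itself an OSDS, which immediately gives $\gamma_{os}(D)\leq |S|=\gamma^{*}(D)$. (The statement in the excerpt writes $T$ but presumably means $D$.)

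First I would unpack what twin dominating gives us: for every $v\in V\setminus S$ there exist $u_1,u_2\in S$ with $(v,u_1),(u_2,v)\in A$, so in particular $u_1\in N^{+}(v)\cap S$ and $u_2\in N^{-}(v)\cap S$. Because $D$ has no symmetric arcs, $u_1\neq u_2$. This immediately shows that $S$ dominates $v$ in the underlying graph $G$, so $S$ is a dominating set of $G$.

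Next, for the secure-domination condition, given $v\in V\setminus S$ I would take the witness $u:=u_2\in N^{-}(v)\cap S$ and argue that $(S\setminus\{u\})\cup\{v\}$ dominates $G$. To verify this, let $w\in V\setminus((S\setminus\{u\})\cup\{v\})$; I need $w$ to be adjacent in $G$ to something in $(S\setminus\{u\})\cup\{v\}$. If $w=u$, then $w$ is adjacent to $v$ in $G$ since $(u,v)\in A$. Otherwise $w\in V\setminus S$ with $w\neq v$, and applying the twin-domination property to $w$ produces $u_1',u_2'\in S$ with $(w,u_1'),(u_2',w)\in A$; by the no-symmetric-arcs hypothesis $u_1'\neq u_2'$, so at least one of them lies in $S\setminus\{u\}$, giving $w$ a neighbor there in $G$. (Alternatively, one could invoke Proposition \ref{obs 3.1} directly: the only out/in-private neighbor of $u$ that could potentially escape is handled because the twin property forces every vertex outside $S$ to have two distinct witnesses in $S$, so $u$ has no private neighbors other than possibly $v$ itself, which lies in $N^{-}[v]\cup N^{+}[v]$.)

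The only real subtlety is the role of the no-symmetric-arcs hypothesis, and it appears exactly twice: once to guarantee that the pair $(u_1,u_2)$ for $v$ is distinct (so $v$ actually has an in-neighbor in $S$ different from some potentially chosen $u_1$), and once to guarantee the analogous distinctness for every other $w\in V\setminus S$, so that deleting $u$ cannot simultaneously kill both of $w$'s witnesses. Beyond this the proof is a short verification, with no induction or counting.
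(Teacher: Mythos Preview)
Your proof is correct and follows essentially the same approach as the paper: take a minimum twin dominating set $S$, choose as defender for each $v\in V\setminus S$ an in-neighbor $u\in N^{-}(v)\cap S$, and verify that $(S\setminus\{u\})\cup\{v\}$ dominates $G$ because every $w\in V\setminus S$ has two distinct witnesses in $S$ (one in- and one out-neighbor, distinct by the no-symmetric-arcs hypothesis), so removing $u$ leaves at least one. Your write-up is in fact more explicit than the paper's; the only minor over-statement is that the distinctness $u_1\neq u_2$ for $v$ itself is not actually needed, since $v$ is placed into the new dominating set---the hypothesis is genuinely used only for the other vertices $w$.
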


\begin{proof}
Let $S$ be a minimum twin dominating set of $D.$ Then every vertex  $v\in V\setminus S$ is out-dominated by at least one vertex $u\in S.$ It follows that  $u$  defends  $v$ and $(S\setminus \{u\})\cup \{v\}$ is a dominating set of $G,$ the underlying undirected graph of $D,$   since every vertex in $ (N^{-}(u)\cup N^{+}(u))\cap (V\setminus S)$ is adjacent to at least one vertex in $S\setminus \{u\}.$
\end{proof}
\begin{theorem} \label{thm 3.11}
{\rm\cite{char}} Let $D$ be a digraph  with $\delta^0> 0,$  then
   $\gamma^*(D)\leq \lfloor\frac{2n}3\rfloor.$
\end{theorem}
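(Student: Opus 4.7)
The plan is to construct a twin dominating set $S$ of size at most $\lfloor 2n/3 \rfloor$ by exploiting the fact that $\delta^0>0$ places every vertex at the center of some directed $2$-path. The elementary module is: for any directed $2$-path $u\to v\to w$ in $D$, the pair $\{u,w\}$ already twin-dominates $v$, since $u\in N^-(v)\cap S$ and $w\in N^+(v)\cap S$. Thus if we could partition $V$ into directed $2$-paths and keep the two endpoints of each, we would obtain a twin dominating set of size exactly $2n/3$.

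Concretely, I would proceed by strong induction on $n$. The base cases $n\leq 3$ are handled directly: with $\delta^0>0$ and $n=3$ the digraph contains a directed triangle, and any two of its vertices form a twin dominating set of size $2=\lfloor 2\cdot 3/3\rfloor$. For the inductive step, I would pick a directed $2$-path $P\colon u\to v\to w$, place $\{u,w\}$ into $S$, remove the triple $\{u,v,w\}$ from $D$ to obtain a subdigraph $D'$ on $n-3$ vertices, and apply the induction hypothesis to $D'$. Whenever $\delta^0(D')>0$, induction gives a twin dominating set $S'$ of $D'$ of size at most $\lfloor 2(n-3)/3\rfloor=\lfloor 2n/3\rfloor-2$, and $S\cup S'$ is then a twin dominating set of $D$ of size at most $\lfloor 2n/3\rfloor$.

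The principal obstacle is the case in which removing $\{u,v,w\}$ destroys the $\delta^0>0$ hypothesis for $D'$, because some outside vertex loses its only in-neighbor or its only out-neighbor inside the triple. I would address this by choosing $P$ strategically: for instance, taking the $2$-path to lie along a shortest directed cycle of $D$ (which exists since $\delta^+\geq 1$ forces a cycle), or choosing $v$ to be a vertex of maximal in- or out-degree, so that the set of vertices whose degree condition is spoiled in $D'$ is small enough to be absorbed into $S$ at no extra asymptotic cost. Equivalently, one can strengthen the inductive statement to permit a bounded number of ``deficient'' vertices, or fall back on a direct case analysis for the sparse extremal instances (short directed cycles and similarly tight configurations), which are precisely the digraphs realizing the bound. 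A small amount of arithmetic bookkeeping at $n\not\equiv 0\pmod 3$ aligns the residue of the induction with the floor in $\lfloor 2n/3\rfloor$.
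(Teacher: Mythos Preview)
This theorem is not proved in the paper at all --- it is quoted verbatim from \cite{char} and used only to derive the subsequent corollary. So there is no ``paper's own proof'' to compare against; the benchmark is the original argument of Chartrand, Dankelmann, Schultz and Swart.

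As for your proposal itself, it has a genuine gap precisely where you flag the ``principal obstacle''. The inductive step works cleanly only when deleting the triple $\{u,v,w\}$ leaves $\delta^0(D')>0$, and you do not actually resolve what happens otherwise. None of the suggested fixes is carried out: choosing the $2$-path on a shortest directed cycle does not prevent an external vertex from losing its unique in- or out-neighbor inside the triple; choosing $v$ of maximum degree does not control the degrees of $u$ and $w$'s external neighbors; and the phrases ``strengthen the inductive statement'' and ``direct case analysis for sparse extremal instances'' are promissory notes, not arguments. This is exactly the place where the work lies, and it cannot be waved through --- the bound is tight (directed cycles achieve it), so any slack you try to absorb must be accounted for exactly.

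There is also a small error in your base case: $\delta^0>0$ on three vertices does \emph{not} force a directed triangle. The digraph with arcs $ab,ba,ac,ca$ has $\delta^0=1$ and no directed $3$-cycle. The bound still holds there (indeed $\gamma^*=1$), but your stated reason is incorrect.
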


\begin{cor}  Let $D$ be a digraph with $\delta^0> 0,$  then
   $\gamma_{os}(D)\leq \lfloor\frac{2n}3\rfloor.$
\end{cor}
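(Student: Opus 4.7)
The proof is essentially a one-line chain of the two immediately preceding results. My plan is to invoke the preceding proposition, which gives $\gamma_{os}(D) \leq \gamma^{*}(D)$, and then Theorem \ref{thm 3.11}, which gives $\gamma^{*}(D) \leq \lfloor 2n/3 \rfloor$ under the hypothesis $\delta^{0} > 0$. Concatenating these two inequalities yields $\gamma_{os}(D) \leq \lfloor 2n/3 \rfloor$, as required. In particular, no new construction is needed: a minimum twin dominating set of $D$ already serves as the witness set, and its cardinality is controlled by Theorem \ref{thm 3.11}.

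The only subtlety is that the preceding proposition is stated under the restriction that $D$ has no symmetric arcs, while the corollary is phrased for arbitrary digraphs with $\delta^{0} > 0$. To bridge this gap, I would re-examine the defense argument in the proof of that proposition and check that symmetric arcs are harmless: they only add extra edges to the underlying graph $G$, and hence can only help the set $(S \setminus \{u\}) \cup \{v\}$ to dominate in $G$. Concretely, for $v \in V \setminus S$ and $u \in N^{-}(v) \cap S$ provided by the twin-domination property, any vertex $w \in (N^{-}(u) \cup N^{+}(u)) \cap (V \setminus S)$ is adjacent in $G$ to some element of $S \setminus \{u\}$ regardless of the presence of symmetric arcs, since the two vertices $u_1, u_2 \in S$ witnessing twin-domination of $w$ still yield at least one neighbor of $w$ in $S \setminus \{u\}$ (with symmetric arcs the two witnesses need not be distinct from $u$, but the in/out-adjacencies themselves still exist in $G$).

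As a safe alternative, I would fall back on Observation \ref{obs 1.6}: if $D$ happens to be symmetric, then $\gamma_{os}(D) = \gamma_{s}(G)$, and the bound $\gamma_{s}(G) \leq \lfloor 2n/3 \rfloor$ for graphs with $\delta(G) \geq 1$ would give the conclusion directly in the symmetric subcase, so the remaining general case reduces to applying the preceding proposition to a spanning subdigraph obtained by deleting one arc from each symmetric pair (invoking Observation \ref{obs 1.7} in the right direction to preserve the upper bound). The main obstacle, if any, is this bookkeeping around symmetric arcs; the rest is a direct citation.
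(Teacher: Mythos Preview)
Your core argument---chain the preceding proposition $\gamma_{os}(D)\le\gamma^{*}(D)$ with Theorem~\ref{thm 3.11}---is exactly the paper's intended derivation, and for digraphs without symmetric arcs nothing more is needed.

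You are right to flag the symmetric-arc issue, but both of your proposed fixes fail. The first (``symmetric arcs only help'') is simply false: take $V=\{a,b,c,d\}$ with the six arcs $ab,ba,ac,ca,ad,da$. Here $\delta^{0}=1$, the set $\{a\}$ is a twin dominating set so $\gamma^{*}=1$, yet $D$ is symmetric with underlying graph $K_{1,3}$, whence $\gamma_{os}(D)=\gamma_{s}(K_{1,3})=3>2=\lfloor 2n/3\rfloor$. In your own notation, with $S=\{a\}$, $u=a$, $w=c$, the twin-domination witnesses for $w$ are $u_{1}=u_{2}=a=u$, so neither lies in $S\setminus\{u\}$; the clause ``the in/out-adjacencies themselves still exist in $G$'' does not rescue the argument, because those adjacencies are to $u$, which has been removed. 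Your second fix via Observation~\ref{obs 1.7} also breaks on this example: any spanning subdigraph $D'$ obtained by deleting one arc from each symmetric pair leaves some leaf with in- or out-degree zero, so $\delta^{0}(D')=0$ and Theorem~\ref{thm 3.11} no longer applies.

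In short, the gap you detected is not repairable: the corollary is false as literally stated and should be read with the hypothesis ``without symmetric arcs'' carried over from the preceding proposition. Under that reading your one-line proof is complete and matches the paper.
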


\section{Secure out-dominating sets}

\begin{prop}\label{prop 4.1}
 Let $S$ be an SODS of a digraph $D.$ A vertex $u\in
N^{+}(v)\cap S$ defends a vertex $v\in V\setminus S$ if, and only if, $%
pn^{+}(u,S)\subseteq N^{+}[v].$
\end{prop}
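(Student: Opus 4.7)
The plan is to analyze directly when $S':=(S\setminus\{u\})\cup\{v\}$ out-dominates every vertex of $V\setminus S' = \{u\}\cup((V\setminus S)\setminus\{v\})$. The key simplification compared with Proposition \ref{prop 2.1} is that here $u\in N^+(v)$ means $vu\in A$, so $v\in S'$ automatically out-dominates $u$. Consequently no separate hypothesis of the form $N^-(u)\cap S\neq\emptyset$ is needed, and the entire question collapses to: which vertices of $(V\setminus S)\setminus\{v\}$ lose their last in-neighbor in $S$ when $u$ is deleted, and can $v$ cover them?

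For the forward direction, I would assume $u$ defends $v$ and take an arbitrary $w\in pn^+(u,S)$. The cases $w=v$ and $w=u$ lie in $N^+[v]$ for free (trivially for $v$, and by hypothesis $u\in N^+(v)$ for $u$), so the interesting case is $w\in(V\setminus S)\setminus\{v\}$. By definition of $pn^+(u,S)$ the only element of $S$ in $N^-[w]$ is $u$; once $u$ is replaced by $v$, the only possible in-neighbor of $w$ inside $S'$ is $v$, so the fact that $S'$ out-dominates $w$ forces $v\in N^-(w)$, i.e.\ $w\in N^+(v)\subseteq N^+[v]$.

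For the backward direction, I would assume $pn^+(u,S)\subseteq N^+[v]$ and verify that every $w\in V\setminus S'$ is out-dominated by $S'$. The vertex $u$ is handled by $v$ as above. For any other $w\in(V\setminus S)\setminus\{v\}$, out-domination by $S$ yields some $x\in N^-(w)\cap S$: if some such $x$ differs from $u$, then $x\in S'$ and we are done; otherwise $N^-(w)\cap S=\{u\}$, placing $w$ in $pn^+(u,S)\subseteq N^+[v]$, and since $w\neq v$ this gives $w\in N^+(v)$, i.e.\ $v\in N^-(w)\cap S'$.

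There is no real obstacle; the only point requiring care is resisting the temptation to import the side condition $N^-(u)\cap S\neq\emptyset$ from Proposition \ref{prop 2.1}, which is rendered unnecessary here precisely because the defender $u$ is an out-neighbor rather than an in-neighbor of $v$.
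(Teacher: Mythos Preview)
Your argument is correct and follows the same direct approach as the paper, which merely sketches the contrapositive of the forward direction (if $w\in pn^{+}(u,S)\setminus N^{+}[v]$ then $(S\setminus\{u\})\cup\{v\}$ fails to out-dominate $w$) and declares the converse obvious. Your write-up simply unpacks both directions in full and makes explicit the key observation that $u\in N^{+}(v)$ automatically handles the out-domination of $u$, which is exactly why the extra condition $N^{-}(u)\cap S\neq\emptyset$ from Proposition~\ref{prop 2.1} disappears.
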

\begin{proof}
If there is a vertex $w\in pn^{+}(u,S)\setminus N^{+}[v]$, then $(S\setminus
\{u\})\cup \{v\}$ does not out-dominate $w.$ The converse is obvious.
\end{proof}

\begin{prop}\label{prop 4.2}
 Let $S$ be an SODS of a digraph $D.$ A vertex $u\in
N^{-}(v)\cap S$ defends a vertex $v\in V\setminus S$ if, and only if, $N^{-}(u)\cap S\neq \emptyset ,$ and  $pn^{+}(u,S)\subseteq N^{+}[v].$
\end{prop}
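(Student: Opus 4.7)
The plan is to observe that, since defending $v$ means precisely that $(S\setminus\{u\})\cup\{v\}$ is out-dominating---a condition that does not reference whether $S$ is an SODS or an OSODS---the argument is structurally identical to the proof of Proposition~\ref{prop 2.1}. I would therefore mirror that proof, verifying both implications by tracking out-domination of the vertex $u$ itself and of the vertices sitting outside $(S\setminus\{u\})\cup\{v\}$.

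For the forward direction I would argue the contrapositive. First, if $N^{-}(u)\cap S=\emptyset$, then after swapping $u$ out and $v$ in, no vertex of $S\setminus\{u\}$ is an in-neighbor of $u$, and since $u\in N^{-}(v)$ only gives the arc $u\to v$, the vertex $u$ fails to be out-dominated by the new set. Second, if some $w\in pn^{+}(u,S)\setminus N^{+}[v]$ exists, then $w\neq v$, the only in-neighbor of $w$ in $S$ is $u$, and $v$ is not an in-neighbor of $w$; hence $w$ is not out-dominated by $(S\setminus\{u\})\cup\{v\}$. In either case $u$ fails to defend $v$.

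For the backward direction I would assume both hypotheses and pick an arbitrary $w\in V\setminus\bigl((S\setminus\{u\})\cup\{v\}\bigr)$. If $w=u$, any element of $N^{-}(u)\cap S$ lies in $S\setminus\{u\}$ (there are no loops) and out-dominates $u$. Otherwise $w\in V\setminus S$ and $w\neq v$; since $S$ is out-dominating, some $s\in S$ satisfies $s\to w$, and if $s\neq u$ we are done. If the only such $s$ is $u$, then $N^{-}[w]\cap S=\{u\}$, so $w\in pn^{+}(u,S)\subseteq N^{+}[v]$, and since $w\neq v$ we have $v\to w$. Thus every such $w$ is out-dominated and $u$ defends $v$.

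I do not expect a genuine obstacle here: the argument is a direct adaptation of Proposition~\ref{prop 2.1}, and the only bookkeeping required is to enumerate the possible positions of a vertex relative to the swap ($w=u$, $w$ an out-private neighbor of $u$, or $w$ with an in-neighbor in $S\setminus\{u\}$), exactly as in the earlier proposition.
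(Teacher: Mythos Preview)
Your proposal is correct and follows exactly the approach the paper takes: the paper's proof of this proposition simply states that it is identical to that of Proposition~\ref{prop 2.1}, and your write-up is precisely a spelled-out version of that argument.
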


\begin{proof}
The proof of this proposition is identical to that of Proposition \ref{prop 2.1}.
\end{proof}

\begin{cor}\label{cor 4.3}
 A set $S\subseteq V$ is an SODS of $D$ if, and only if, $S$
is out-dominating and for every $v\in V\setminus S$ one of the two following
conditions hold:

\begin{enumerate}
\item There exists a vertex $u\in N^{+}(v)\cap S$ such that $%
pn^{+}(u,S)\subseteq N^{+}[v].$

\item There exists a vertex $u\in N^{-}(v)\cap S$ such that $%
pn^{+}(u,S)\subseteq N^{+}[v]$ and $N^{-}(u)\cap S\neq \emptyset .$
\end{enumerate}
\end{cor}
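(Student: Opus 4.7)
The plan is to read this characterization off directly from Propositions \ref{prop 4.1} and \ref{prop 4.2}, which together enumerate exactly when a candidate vertex $u$ defends $v$. The definition of an SODS requires that $S$ be out-dominating and that, for each $v\in V\setminus S$, at least one vertex $u\in(N^{+}(v)\cup N^{-}(v))\cap S$ defend $v$. Since such a $u$ must lie in $N^{+}(v)\cap S$ or in $N^{-}(v)\cap S$ (possibly both), the corollary simply records the two resulting local conditions.

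For the forward direction I would assume $S$ is an SODS, fix $v\in V\setminus S$, and pick a defender $u\in(N^{+}(v)\cup N^{-}(v))\cap S$. If $u\in N^{+}(v)\cap S$, Proposition \ref{prop 4.1} yields $pn^{+}(u,S)\subseteq N^{+}[v]$, which is condition (1); otherwise $u\in N^{-}(v)\cap S$, and Proposition \ref{prop 4.2} gives both $N^{-}(u)\cap S\neq\emptyset$ and $pn^{+}(u,S)\subseteq N^{+}[v]$, which is condition (2). The out-domination clause is part of the definition of SODS and transfers immediately.

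For the converse I would suppose $S$ is out-dominating and that at least one of (1) or (2) holds for each $v\in V\setminus S$. Invoking Proposition \ref{prop 4.1} under condition (1), or Proposition \ref{prop 4.2} under condition (2), the distinguished $u$ defends $v$. In either case $u\in(N^{+}(v)\cup N^{-}(v))\cap S$, so the defining property of an SODS is met and $S$ is an SODS.

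There is essentially no obstacle: the argument is a bookkeeping combination of the two preceding propositions. The only mildly delicate point is that when $D$ has symmetric arcs the sets $N^{+}(v)$ and $N^{-}(v)$ can overlap, so a single $u$ may simultaneously witness (1) and (2); this causes no issue because the disjunction in the statement is inclusive and either proposition may then be invoked.
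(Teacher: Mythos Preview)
Your argument is correct and is precisely the approach the paper intends: the corollary is stated in the paper without proof, as it is meant to follow immediately from Propositions~\ref{prop 4.1} and~\ref{prop 4.2}, and your write-up simply spells out that combination. The only point worth flagging is that Propositions~\ref{prop 4.1} and~\ref{prop 4.2} are phrased under the hypothesis that $S$ is already an SODS, whereas in your converse direction you invoke them when $S$ is merely out-dominating; this is harmless because their proofs use nothing beyond the out-domination of $S$, but you might mention this explicitly for completeness.
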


\begin{theorem}\label{prop 5.3}
For every digraph $D$ without symmetric arcs, $\frac{n+1}{\Delta ^{+}+1}\leq
\gamma _{so}(D),$ and the bound is sharp even for $\gamma _{iso}(D).$
\begin{proof}
Let $S$ be an SODS of $D$. It is clear that $n\leq (\Delta
^{+}+1)\gamma _{so}(D).$ If there are $\{u,w\}\subseteq S$ such that $uw\in
A,$ we have that $n\leq (\Delta ^{+}+1)\gamma _{so}(D)-1;$ similarly, if
there are $\{u,w\}\subseteq S$ such that $N^{+}(u)\cap N^{+}(w)\neq
\emptyset ,$ then $n\leq (\Delta ^{+}+1)\gamma _{so}(D)-1.$ Therefore, we
can assume that for every $u\in S,$ $N^{+}(u)=pn^{+}(u,S).$ If there exists $%
u\in S$ such that $d^{+}(u)<\Delta ^{+},$ then we have as well $n\leq
(\Delta ^{+}+1)\gamma _{iso}(D)-1.$ Assume then that every $u\in S$ has
out-degree $\Delta ^{+}$, and that all its out-neighbors are in $pn^{+}(u,S).
$ Take $v\in V\setminus S;$ since $S$ is an SODS, there is a vertex $w\in
(N^{+}(v)\cup N^{-}(v))\cap S$ such that $(S\setminus \{w\})\cup \{v\}$ is
out-dominating. From Proposition \ref{prop 4.1} and Proposition \ref{prop 4.2}, we have that $pn^{+}(w,S)\subseteq
N^{+}[v].$ If $w\in N^{-}(v),$ Proposition \ref{prop 4.2} implies that $N^{-}(w)\cap
S\neq \emptyset $, so there is a vertex $w^{\prime }\in S$ such that $%
N^{+}(w^{\prime })\neq pn^{+}(w^{\prime },S),$ a contradiction. If $w\in
N^{+}(v),$ then $|N^{+}(v)|~\geq |pn^{+}(w,S)|~+1=\Delta ^{+}+1,$ which is
also a contradiction. Therefore, for every SODS of $D$ it holds that there
exists $u\in S$ such that either $N^{+}(u)\neq pn^{+}(u,S)$ or $%
d^{+}(u)<\Delta ^{+},$ which implies that $n\leq (\Delta ^{+}+1)\gamma
_{so}(D)-1.$

To show that the bound is sharp even for $\gamma _{iso}$, consider the
digraph $D=(V,A),$ where $V=\{w,u_{1},...,u_{k},v_{1},...v_{k}\}$ and $%
A=\{u_{i}v_{i}:i\in \{1,...,k\}\}\cup \{v_{i}w:i\in \{1,...,k\}\}.$ It is
clear that for every SODS $S$, the set $B=\{u_{1},...,u_{k}\}\subset S,$
since those vertices are not out-dominated by anyone. This is not enough,
since the vertices in $V\setminus B$ are not defended. However, $%
S=\{w,u_{1},...,u_{k}\}$ is a minimum ISODS, since $w$ defends every vertex
in $V\setminus S$. Then we have $2k+1=n=(\Delta ^{+}+1)\gamma _{iso}(D)-1.$
\end{proof}
\end{theorem}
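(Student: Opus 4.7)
The plan is to prove the lower bound by a refinement of the obvious counting argument and then exhibit a tight example for $\gamma_{iso}$. The naive estimate, using only that an SODS $S$ is out-dominating, gives $n - |S| \leq \sum_{u \in S} d^+(u) \leq |S|\Delta^+$, hence $n \leq (\Delta^+ + 1)|S|$. The task is to squeeze one extra unit out of this inequality by using the additional defense requirement together with the no-symmetric-arcs hypothesis.

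First I would collect the cases where slack is already present. If there exist $u,w \in S$ with $uw \in A$, then $w$ is out-dominated by $u$ but lies in $S$, so the estimate loses $1$ and we get $n \leq (\Delta^+ + 1)|S| - 1$; the same gain comes from any pair $u,w \in S$ whose out-neighborhoods overlap. Thus we may assume that for every $u \in S$, $N^+(u)$ is contained in $V \setminus S$ and the out-neighborhoods of distinct elements of $S$ are pairwise disjoint, which is exactly the statement $N^+(u) = pn^+(u,S)$ for all $u \in S$ (note that, under no arcs inside $S$, one even has $pn^+(u,S) = N^+[u]$). Similarly, if some $u \in S$ has $d^+(u) < \Delta^+$, the count again strictly improves.

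The crux is to show that the only remaining configuration --- all out-neighborhoods private, pairwise disjoint, and of size exactly $\Delta^+$ --- is impossible for an actual SODS. Pick $v \in V \setminus S$ and a vertex $w \in (N^+(v) \cup N^-(v)) \cap S$ that defends $v$. If $w \in N^-(v)$, Proposition 4.2 forces $N^-(w) \cap S \neq \emptyset$, which contradicts the absence of arcs inside $S$. If $w \in N^+(v)$, then by Proposition 4.1 we have $pn^+(w,S) \subseteq N^+[v]$; moreover $v \notin pn^+(w,S)$, for otherwise $w \in N^-(v)$ while also $w \in N^+(v)$, a symmetric pair of arcs. Therefore $pn^+(w,S) \subseteq N^+(v)$, giving $\Delta^+ + 1 = |N^+[w]| = |pn^+(w,S)| \leq |N^+(v)| \leq \Delta^+$, a contradiction. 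Hence in this remaining case no SODS exists, and in every case $n \leq (\Delta^+ + 1)\gamma_{so}(D) - 1$, which rearranges to the desired bound. The main obstacle is making sure that $pn^+(w,S)$ is computed correctly in the extremal configuration (it must include $w$ itself because $w$ has no in-neighbors in $S$) and that the symmetric-arc exclusion is exactly what rules out the final subcase.

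For sharpness, I would present the digraph $D=(V,A)$ with $V = \{w,u_1,\dots,u_k,v_1,\dots,v_k\}$ and $A = \{u_iv_i : 1 \leq i \leq k\} \cup \{v_iw : 1 \leq i \leq k\}$. Each $u_i$ has in-degree $0$, so it lies in every out-dominating set, and $w$ is needed in any ISODS in order to defend the $v_i$'s via the unique out-neighbor $w \in N^+(v_i)$; conversely $\{w,u_1,\dots,u_k\}$ is easily checked to be an ISODS. Thus $\gamma_{iso}(D) = k+1$, while $n = 2k+1$ and $\Delta^+ = 1$, so $(n+1)/(\Delta^+ + 1) = k+1 = \gamma_{iso}(D)$, showing the bound is tight even for the (generally larger) parameter $\gamma_{iso}$.
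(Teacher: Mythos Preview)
Your proof is correct and follows essentially the same approach as the paper: exhaust the easy sources of slack in the counting estimate, reduce to the extremal configuration where every $u\in S$ has $N^{+}(u)=pn^{+}(u,S)$ of size exactly $\Delta^{+}$, and then derive a contradiction from the defense condition via Propositions~4.1 and~4.2. Your treatment is in fact slightly more careful than the paper's in two places: you explicitly identify where the no-symmetric-arcs hypothesis is used in the case $w\in N^{+}(v)$ (to exclude $v\in pn^{+}(w,S)$), and you correctly note that in the extremal configuration $pn^{+}(w,S)=N^{+}[w]$ rather than $N^{+}(w)$, which makes the final cardinality count clean; the sharpness example is identical.
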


Since every OSODS of a digraph $D$ is an SODS of $D,$ the bound stated in Theorem \ref{prop 5.3} applies to OSODSs as well. The bound in Theorem \ref{prop 5.3} is better than that in Theorem \ref{thm 2.3} if, and only if, $\Delta ^{+}>\frac{n-1}{2}.$

\begin{theorem}\label{thm 4.10}
 Let $D$ be any digraph. Then $\gamma _{so}(D)=n$ if, and
only if, for every $u\in V(D),$ $d^{+}(u)=0$ or $d^{-}(u)=0$.
\begin{proof}
The proof of this result is almost identical to that of Theorem \ref{thm 2.5}.
\end{proof}
\end{theorem}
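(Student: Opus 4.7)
The plan is to mimic the proof of Theorem \ref{thm 2.5}, adapted for the fact that an SODS permits a defender of $v\in V\setminus S$ to come from $N^{+}(v)\cup N^{-}(v)$ rather than just from $N^{-}(v)$. For the direction in which $\gamma_{so}(D)=n$ forces the degree condition, I would argue by contrapositive: if some $w\in V(D)$ has both $d^{+}(w)>0$ and $d^{-}(w)>0$, then Observation \ref{obs 1.99} gives that $V(D)\setminus\{w\}$ is an ISODS of $D$, and Proposition \ref{prop 1.8} guarantees that every ISODS is an SODS, yielding $\gamma_{so}(D)\leq\gamma_{iso}(D)\leq n-1$, a contradiction.

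For the reverse direction, assume that $d^{+}(u)=0$ or $d^{-}(u)=0$ for every $u\in V(D)$, and let $S$ be the set of vertices of in-degree zero; each of them must belong to every out-dominating set, and in particular to every SODS. I would then show that every $v\in V\setminus S$ also lies in every SODS. Since $v\notin S$ we have $d^{-}(v)>0$, which by hypothesis forces $d^{+}(v)=0$, so $N^{+}(v)=\emptyset$; moreover every $u\in N^{-}(v)$ satisfies $d^{+}(u)>0$, hence $d^{-}(u)=0$ and $u\in S$. If $T$ were an SODS with $v\notin T$, any defender of $v$ would have to lie in $(N^{+}(v)\cup N^{-}(v))\cap T=N^{-}(v)\cap T\subseteq S$; but Proposition \ref{prop 4.2} requires $N^{-}(u)\cap T\neq\emptyset$ for such a defender, and this fails because $u\in S$ gives $N^{-}(u)=\emptyset$. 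Hence no $v\in V\setminus S$ can be defended, so every SODS equals $V(D)$.

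The only subtlety relative to Theorem \ref{thm 2.5} is verifying that the extra defending option permitted by the SODS definition, namely a defender in $N^{+}(v)\cap T$, cannot save any $v\in V\setminus S$; this is immediate since the hypothesis forces $N^{+}(v)=\emptyset$, so no new obstacle arises beyond what appears in the proof of Theorem \ref{thm 2.5}.
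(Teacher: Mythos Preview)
Your proof is correct and follows essentially the same strategy as the paper's (which simply refers back to Theorem~\ref{thm 2.5}): the contrapositive via Observation~\ref{obs 1.99} for one direction, and for the other, the observation that the in-degree-zero vertices must lie in every out-dominating set while the remaining vertices cannot be defended because their only potential defenders have empty in-neighborhood. The one extra step you correctly supply---checking that $N^{+}(v)=\emptyset$ so that the additional SODS defending option via Proposition~\ref{prop 4.1} is vacuous---is exactly the ``almost'' in the paper's remark that the argument is ``almost identical'' to that of Theorem~\ref{thm 2.5}.
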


\begin{cor}\label{cor 4.11}
A nontrivial graph $G$ admits an orientation $D$ such that $\gamma
_{so}(D)=n$ if, and only if, $G$ is a bipartite graph.
\end{cor}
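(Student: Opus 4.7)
The plan is to mimic the proof of Corollary \ref{thm 2.6} line by line, but with Theorem \ref{thm 4.10} replacing Theorem \ref{thm 2.5}. The key point is that Theorem \ref{thm 4.10} gives exactly the same degree dichotomy (for every $u\in V(D)$, $d^{+}(u)=0$ or $d^{-}(u)=0$) as the one characterizing $\gamma_{oso}(D)=n$. Since the proof of Corollary \ref{thm 2.6} uses \textbf{only} this dichotomy together with a parity argument on an odd cycle, replacing the characterization wholesale yields the present statement.

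For the forward direction, I would assume that $G$ has an orientation $D$ with $\gamma_{so}(D)=n$, invoke Theorem \ref{thm 4.10} to conclude that each $v\in V(D)$ satisfies $d^{+}(v)=0$ or $d^{-}(v)=0$, and suppose for contradiction that $G$ is not bipartite. Picking an odd cycle $C_{2r+1}:(v_{1},\dots,v_{2r+1},v_{1})$ in $G$ and, without loss of generality, assuming $d^{+}(v_{1})=0$, the orientations of the consecutive edges are forced to alternate: $d^{-}(v_{2})=0$, $d^{+}(v_{3})=0$, and so on. Thus $d^{+}(v_{i})=0$ for odd $i$ and $d^{-}(v_{i})=0$ for even $i$, but the closing edge $v_{2r+1}v_{1}$ (or $v_{1}v_{2r+1}$) forces $d^{+}(v_{2r+1})\neq 0$ or $d^{-}(v_{1})\neq 0$, contradicting the dichotomy.

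For the converse, assume $G$ is bipartite with bipartition $(X,Y)$. I would orient every edge of $G$ from $X$ to $Y$ in $D$, so that $d^{-}(v)=0$ for every $v\in X$ and $d^{+}(v)=0$ for every $v\in Y$. The degree dichotomy is satisfied at every vertex of $D$, so Theorem \ref{thm 4.10} yields $\gamma_{so}(D)=n$, as required.

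I do not foresee any real obstacle: the proof is a direct transcription of the argument already used for $\gamma_{oso}$, since both $\gamma_{oso}(D)=n$ and $\gamma_{so}(D)=n$ admit the identical vertex-degree characterization. The only place where one must be slightly careful is the case $r=0$ (a loop-like configuration) and the base $n\leq 2$ for nontriviality, but both are handled trivially: nontriviality of $G$ gives at least one edge, which is already enough to run the orientation argument in the converse.
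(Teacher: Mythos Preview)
Your proposal is correct and follows exactly the approach indicated in the paper, which simply states that the proof is similar to that of Corollary~\ref{thm 2.6} with Theorem~\ref{thm 4.10} in place of Theorem~\ref{thm 2.5}. Your detailed transcription of the odd-cycle parity argument and the bipartite orientation for the converse is precisely what the paper intends; the only superfluous remark is the aside about ``$r=0$'', since the shortest odd cycle in a simple graph is $C_3$.
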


\begin{proof}
Proof is similar to that of Corollary \ref{thm 2.6}, using Theorem \ref{thm 4.10}.
\end{proof}

\begin{prop}\label{prop 4.6}
 Let $P_n=(v_1,v_2,v_3,...,v_n)$ be a directed path. Then $\gamma _{so}(P_{n})=\lceil \frac{3n}{5}\rceil .$
\end{prop}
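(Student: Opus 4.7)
For the upper bound I would exhibit an explicit SODS of size $\lceil 3n/5 \rceil$ based on the periodic word $10110$. Write $n = 5q + r$ with $r \in \{0,1,2,3,4\}$ and take $S$ to consist of $v_{5j+1}, v_{5j+3}, v_{5j+4}$ for $0 \leq j < q$, together with a short tail depending on $r$: namely $\emptyset$, $\{v_{5q+1}\}$, $\{v_{5q+1}, v_{5q+2}\}$, $\{v_{5q+1}, v_{5q+2}\}$, or $\{v_{5q+1}, v_{5q+3}, v_{5q+4}\}$ for $r = 0, 1, 2, 3, 4$ respectively. By Proposition \ref{prop 4.1}, $v_{5j+2}$ is defended by its successor $v_{5j+3}$ (type R, since $v_{5j+4} \in S$); by Proposition \ref{prop 4.2}, $v_{5j+5}$ is defended by its predecessor $v_{5j+4}$ (type L, since $v_{5j+3} \in S$); the tail cases are similar short checks. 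A direct count gives $|S| = \lceil 3n/5 \rceil$.

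For the lower bound I would run a gap-counting argument. Let $S$ be any SODS of $P_n$, set $k = n - |S|$, and list the indices outside $S$ as $i_1 < \cdots < i_k$. Define gaps $g_0 = i_1 - 1$, $g_j = i_{j+1} - i_j - 1$ for $1 \leq j < k$, and $g_k = n - i_k$. Observation \ref{obs 1.99} forces $v_1 \in S$, so $g_0 \geq 1$; out-domination forbids two consecutive non-$S$ vertices, so $g_j \geq 1$ for $1 \leq j < k$; and $g_k \geq 0$. By Propositions \ref{prop 4.1} and \ref{prop 4.2}, defending $v_{i_j}$ forces either $g_{j-1} \geq 2$ (type L, requiring $v_{i_j - 2} \in S$) or $g_j \geq 2$ when $j < k$, respectively $g_k \geq 1$ when $j = k$ (type R). Regard $g_0, \ldots, g_k$ as the $k+1$ vertices of an auxiliary path whose $k$ edges are the zeros (edge $e_j$ joining $g_{j-1}$ and $g_j$); the defense constraints demand a vertex cover of this path. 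Every covered gap exceeds its baseline by exactly $1$ (baseline is $1$ for $g_0, \ldots, g_{k-1}$ and $0$ for $g_k$), and the minimum vertex cover of a $k$-edge path has size $\lceil k/2 \rceil$. Hence $n - k = \sum_\ell g_\ell \geq k + \lceil k/2 \rceil$, which rearranges to $k \leq \lfloor 2n/5 \rfloor$ and therefore $|S| \geq \lceil 3n/5 \rceil$.

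The main obstacle is justifying the vertex-cover bound $\lceil k/2 \rceil$ cleanly in the presence of two asymmetries: the endpoint gap $g_k$ has baseline $0$ rather than $1$ (but still contributes exactly $+1$ when covered, since type R at $j = k$ forces only $g_k \geq 1$), and the first zero $v_{i_1}$ must take type R whenever $i_1 = 2$ (since $v_{i_1 - 2}$ does not exist). Both edge cases are compatible with the generic counting---covering $g_k$ still costs exactly $+1$, and the forced choice at $v_2$ can only enlarge any valid vertex cover, not shrink it---so the universal lower bound $\lceil k/2 \rceil$ continues to apply.
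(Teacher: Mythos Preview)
Your upper bound is the same construction as the paper's (the periodic block $1,3,4 \pmod 5$), with only a cosmetic difference in the tail for $r=3$.

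Your lower bound, however, takes a genuinely different route. The paper argues by induction: it verifies the bound by hand for $1\le n\le 22$, then observes that for $n\ge 23$ a $\gamma_{so}$-set must contain a vertex $v_{i+1}\in S$ both of whose neighbours lie outside $S$ (otherwise $|S|\ge\lceil 2n/3\rceil-1$), deletes the arc $v_iv_{i+1}$, and applies the induction hypothesis to the two resulting subpaths. Your argument is a direct gap-count: encode the complement of $S$ by the gap sequence $g_0,\dots,g_k$, note the baselines $1,\dots,1,0$, and interpret the defense constraint at each zero as an edge of an auxiliary path that must be covered by an above-baseline gap; the minimum vertex cover of a $k$-edge path has size $\lceil k/2\rceil$, whence $n-k=\sum g_\ell\ge k+\lceil k/2\rceil$ and $k\le\lfloor 2n/5\rfloor$.

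Your route is cleaner: it eliminates the need to verify twenty-two base cases, it makes the extremal structure transparent (alternating above-baseline gaps realise equality), and the vertex-cover reformulation is reusable for related parameters on paths. The paper's inductive splitting, on the other hand, is the template that the authors reuse verbatim for $C_n$ (Proposition~\ref{prop 4.7}) and in Sections~2 and~5, so it fits their overall programme better. One small wording issue: where you write ``exceeds its baseline by exactly $1$'' you mean ``by at least $1$''; only the inequality is needed for the bound, and your edge-case remarks about $g_k$ and $i_1=2$ are correct since both merely restrict which vertex covers are admissible without lowering the minimum cover size.
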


\begin{proof}

It is clear that $S=\{v_{i}:i\equiv 1,3,4\ ({ mod}\ 5)\}$ is an SODS of $P_n$ if $n\equiv 0,1,3,4\ ({mod}\ 5),$ and $S\cup\{v_n\}$ is an SODS of $P_n$ if $n\equiv 2\ ({mod}\ 5)$. Therefore, $\gamma _{so}(P_{n})\leq\lceil \frac{3n}{5}\rceil.$

We will prove the converse by induction on $n.$ It can be checked that $\gamma _{so}(C_{n})\geq\lceil \frac{3n}{5}\rceil $
for $1\leq n\leq 22,$ and observe that if $n>22$ then $\lceil \frac{3n}{5}\rceil <\lceil \frac{2n}{3}\rceil-1$. Consider $P_n=(V,A)$ such that $n\geq 23,$ and let $S$ be a $\gamma _{so}$-set of $P_{n}.$ From Proposition  \ref{prop 4.1} and Proposition  \ref{prop 4.2}, it follows that if a vertex $u\in S$ defends some vertex $v\in V\setminus S,$ then $(N^{-}(u)\cup N^{+}(u))\cap S\neq \emptyset$ (because all vertices have in-degree at most $1$).

Therefore, if at most one vertex $u\in S$ does not defend any vertex $v\in V\setminus S,$ then $|S|~\geq $ $\lceil \frac{2n}{3}\rceil,$ which is a contradiction. This implies that there are two vertices in $S$  not defending any vertex in $V\setminus S.$ Therefore, again from Proposition  \ref{prop 4.1} and Proposition  \ref{prop 4.2}, there is a vertex $v_{i+1}\in S$ with $1\leq i\leq n-1$ such that $(N^{-}(v_{i+1})\cup N^{+}(v_{i+1}))\cap S=\emptyset .$

Consider the digraph $P_{n}-v_{i}v_{i+1}\cong P_i\cup P_{n-i}$ and the sets $S_1=S\cap V(P_i)$ and $S_2=S\cap V(P_{n-i}).$ Then $S_1$ is an SODS of $P_i$ and $S_2$ is an SODS of $P_{n-i},$ since $v_{i+1}$ does not defend any vertex of $V\setminus S$ in $P_{n},$ and it out-dominates $v_{i+2}$ (if it exists) both in $P_{n}$ and in $P_{n}-v_{i}v_{i+1}.$ From the induction hypothesis,   $\left|S_1\right|\geq\left\lceil \frac {3i}5\right\rceil$ and $ \left|S_2\right| \geq\left\lceil \frac {3(n-i)}5\right\rceil$. Hence $\left|S\right| \geq\left\lceil \frac {3(i+n-i)}5\right\rceil=\left\lceil \frac {3n}5\right\rceil,$ which implies that $\gamma_{iso}(P_n)\geq\left\lceil \frac {3n}5\right\rceil$ for every directed path  $P_n.$
\end{proof}

\begin{prop}\label{prop 4.7}
 Let $C_n=(v_1,v_2,v_3,...,v_n,v_1)$ be a directed cycle with $n\geq 3$ vertices. Then $\gamma _{so}(C_{n})=\lceil \frac{3n}{5}\rceil .$
\begin{proof}
By Observation \ref{obs 1.7}  and Proposition \ref{prop 4.6}, $\gamma
_{so}(C_{n})\leq \gamma _{so}(P_{n})=\lceil \frac{3n}{5}\rceil .$\newline
For the converse, we proceed as in Proposition \ref{prop 4.6}. It is easy to check that $\gamma _{so}(C_{n})=\lceil \frac{3n}{5}\rceil $
for $3\leq n\leq 12.$ Moreover, for $n>12$ we have that $\lceil \frac{3n}{5}%
\rceil <\lceil \frac{2n}{3}\rceil $. Let $C_{n}=(V,A)$ be a directed cycle
with $n>12,$ and let $S$ be a $\gamma _{so}$-set of $C_{n}.$ From
Proposition  \ref{prop 4.1} and Proposition  \ref{prop 4.2}, it follows that if
a vertex $u\in S$ defends some vertex $v\in V\setminus S,$ then $(N^{-}(u)\cup N^{+}(u))\cap S\neq \emptyset$ (because all vertices have in-degree $1$).

Therefore, if every vertex $u\in S$ defends a vertex $v\in V\setminus S,$ then $%
|S|~\geq $ $\lceil \frac{2n}{3}\rceil ,$ which is a contradiction. This
implies that there is a vertex $v_{i}\in S$ not defending any vertex in $V\setminus S,$
and then, again from Proposition  \ref{prop 4.1} and Proposition  \ref{prop 4.2}, we have $(N^{-}(v_{i})\cup N^{+}(v_{i}))\cap S=\emptyset .$

Consider the path $C_{n}-v_{i-1}v_{i}\cong P_{n}.$ Then $S$ is an SODS of $%
C_{n}-v_{i-1}v_{i},$ since $v_{i}$ does not defend any vertex of $V\setminus
S$ in $C_{n},$ and it out-dominates $v_{i+1}$ both in $C_{n}$ and in $%
C_{n}-v_{i-1}v_{i}.$ Moreover, $C_{n}-v_{i-1}v_{i}\cong P_{n}$ implies that $%
\gamma _{so}(C_{n}) = |S|~\geq \gamma _{so}(P_{n})=\lceil \frac{3n}{5}%
\rceil .$  Therefore, for every cycle $C_{n}$ with $n\geq 3,$ $\gamma
_{so}(C_{n})= \lceil \frac{3n}{5}\rceil .$
\end{proof}
\end{prop}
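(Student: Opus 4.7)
The strategy is to establish the equality by two matching inequalities, mirroring the argument for $P_n$ in Proposition \ref{prop 4.6}. For the upper bound, I would remove any single arc from $C_n$ to obtain a spanning subdigraph isomorphic to $P_n$; Observation \ref{obs 1.7} combined with Proposition \ref{prop 4.6} then gives $\gamma_{so}(C_n) \leq \gamma_{so}(P_n) = \lceil 3n/5 \rceil$.

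For the lower bound, I would first verify the claim by direct case analysis for small $n$ (roughly $3 \leq n \leq 12$), where $\lceil 3n/5 \rceil$ is not yet strictly less than $\lceil 2n/3 \rceil$. For larger $n$ one has $\lceil 3n/5 \rceil < \lceil 2n/3 \rceil$; let $S$ be a $\gamma_{so}$-set of $C_n$. Because every vertex of $C_n$ has in-degree and out-degree equal to one, Propositions \ref{prop 4.1} and \ref{prop 4.2} simplify to the statement that a vertex $u \in S$ can defend some $v \notin S$ only when $u$ has a cycle-neighbor in $S$. If every vertex of $S$ defended someone, a structural analysis of the cyclic pattern of $S$ would force $|S| \geq \lceil 2n/3 \rceil$, contradicting $|S| = \lceil 3n/5 \rceil$. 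Hence there exists $v_i \in S$ that defends nothing, and a second application of Propositions \ref{prop 4.1} and \ref{prop 4.2} forces $(N^{-}(v_i) \cup N^{+}(v_i)) \cap S = \emptyset$. Now I would cut the arc $v_{i-1}v_i$ to obtain $C_n - v_{i-1}v_i \cong P_n$ on the same vertex set, and argue that $S$ is still an SODS of this path: out-domination is preserved because the only removed arc enters $v_i \in S$, and the defense conditions for every $u \in S \setminus \{v_i\}$ are unaltered since their $pn^{+}(u,S)$ and $N^{-}(u)$ do not change, while $v_i$ itself was already non-defending. Proposition \ref{prop 4.6} then yields $|S| \geq \gamma_{so}(P_n) = \lceil 3n/5 \rceil$.

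The main obstacle I anticipate is the density step ``every vertex of $S$ defends someone implies $|S| \geq \lceil 2n/3 \rceil$.'' The crux is the observation that, under the simplified characterization above, $v_j \in S$ defends a vertex precisely when exactly one of $v_{j-1}, v_{j+1}$ belongs to $S$; consequently $S$ must decompose into pairs of consecutive cycle-vertices separated by single non-$S$ vertices, giving the cyclic pattern $(SSN)^{n/3}$, which both requires $3 \mid n$ and forces $|S| = 2n/3$. A related subtlety is verifying that the non-defending $v_i$ can be taken with both cycle-neighbors outside $S$ (rather than both inside), which amounts to ruling out ``long runs'' of $S$ inside a minimum SODS.
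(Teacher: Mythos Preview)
Your approach mirrors the paper's almost exactly: upper bound via Observation~\ref{obs 1.7} and Proposition~\ref{prop 4.6}, lower bound by locating an ``isolated'' $v_i\in S$, cutting the arc $v_{i-1}v_i$, and reducing to $P_n$. You are also right to flag the subtle step. From the full characterization you state (which is correct: on $C_n$, a vertex $v_j\in S$ defends someone if and only if \emph{exactly one} of $v_{j-1},v_{j+1}$ lies in $S$), a non-defending $v_i\in S$ has either \emph{both} neighbours in $S$ or \emph{neither} in $S$; so ``$v_i$ defends nothing'' does not by itself force $(N^-(v_i)\cup N^+(v_i))\cap S=\emptyset$. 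The paper's own proof glosses over exactly this point.

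The clean way to close the gap, however, is not to ``rule out long runs'' in a minimum SODS (this is not obviously true and is harder than necessary), but to reorganise the density argument so that it bypasses ``defending'' altogether. Since $S$ is out-dominating on $C_n$, every maximal block of non-$S$ vertices has length exactly~$1$. Hence, if \emph{every} $u\in S$ had at least one cycle-neighbour in $S$, each maximal $S$-run would have length $\geq 2$; with $k$ runs and $k$ single gaps we get $n=|S|+k$ and $|S|\geq 2k$, so $|S|\geq 2n/3$, contradicting $|S|\leq\lceil 3n/5\rceil<\lceil 2n/3\rceil$ for $n>12$. Taking the contrapositive yields directly a vertex $v_i\in S$ with $\{v_{i-1},v_{i+1}\}\cap S=\emptyset$, and your reduction to $P_n$ then goes through verbatim. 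The detour through ``$v_i$ defends nothing'' is unnecessary.
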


\begin{prop}\label{prop 4.8}
Let $l(D)$ denote the length of a longest directed path in $D$. Then $\gamma
_{so}(D)\leq n-\lfloor \frac{2l(D)+2}{5}\rfloor $ and the bound is sharp.
\end{prop}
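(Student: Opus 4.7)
The plan is to mimic the proof of Proposition \ref{prop 2.11}, replacing Proposition \ref{prop 2.9} with Proposition \ref{prop 4.6}. Concretely, let $P=(v_0,v_1,\ldots,v_{l(D)})$ be a longest directed path in $D$, regarded as an induced subdigraph on $l(D)+1$ vertices. Let $S$ be a minimum SODS of $P$, so by Proposition \ref{prop 4.6} we have $|S|=\lceil 3(l(D)+1)/5\rceil$. Define
\[
S_1 \;=\; S \,\cup\, \bigl(V(D)\setminus V(P)\bigr).
\]
The first step is to verify that $S_1$ is an SODS of $D$. The set $S_1$ is out-dominating in $D$: any vertex not in $S_1$ lies in $V(P)\setminus S$ and is out-dominated in $P$ by some vertex of $S$, and the witnessing arc persists in $D$. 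For the secure condition, take any $v\in V(D)\setminus S_1=V(P)\setminus S$. Since $S$ is an SODS of $P$, there exists $u\in (N^+_P(v)\cup N^-_P(v))\cap S$ such that $(S\setminus\{u\})\cup\{v\}$ out-dominates $P$. I claim this same $u$ defends $v$ with respect to $S_1$ in $D$: the vertices of $V(D)$ missed by $(S_1\setminus\{u\})\cup\{v\}$ are exactly $\bigl((V(P)\setminus S)\setminus\{v\}\bigr)\cup\{u\}$, and each such vertex lies in $V(P)$ and is out-dominated by $(S\setminus\{u\})\cup\{v\}$ in $P$; the witnessing arcs lie in $D$ as well.

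A short arithmetic check closes the inequality:
\[
|S_1| \;=\; n-(l(D)+1)+\Bigl\lceil \tfrac{3(l(D)+1)}{5}\Bigr\rceil \;=\; n-\Bigl\lfloor \tfrac{2l(D)+2}{5}\Bigr\rfloor,
\]
so $\gamma_{so}(D)\leq n-\lfloor (2l(D)+2)/5\rfloor$. For sharpness, the directed path $P_n$ itself attains equality: $l(P_n)=n-1$ and by Proposition \ref{prop 4.6}, $\gamma_{so}(P_n)=\lceil 3n/5\rceil=n-\lfloor 2n/5\rfloor=n-\lfloor(2l(P_n)+2)/5\rfloor$.

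The only point that needs care (rather than being purely formal) is the verification that $u$ still defends $v$ \emph{in $D$} after enlarging $S$ to $S_1$: one must check that the swap $S_1\setminus\{u\}\cup\{v\}$ leaves out-domination intact outside $V(P)$ as well. This is immediate because every vertex of $V(D)\setminus V(P)$ already belongs to $S_1\setminus\{u\}$ (since $u\in S\subseteq V(P)$) and is therefore untouched by the exchange. No interaction between $D\setminus P$ and the swap can occur, so the argument reduces cleanly to the SODS property of $S$ on $P$.
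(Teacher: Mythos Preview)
Your proof is correct and follows exactly the approach the paper intends: take a longest directed path $P$, apply Proposition~\ref{prop 4.6} to get a minimum SODS $S$ of $P$, and observe that $S\cup(V(D)\setminus V(P))$ is an SODS of $D$, with directed paths witnessing sharpness. The only imprecision is the phrase ``regarded as an induced subdigraph''---you should treat $P$ as the directed path $P_{l(D)+1}$ (path arcs only), not the subdigraph induced by $V(P)$ in $D$, since Proposition~\ref{prop 4.6} applies to the former; your subsequent use of Proposition~\ref{prop 4.6} and the witnessing-arc argument make clear this is what you intend, so no harm is done.
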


\begin{proof}
 The result follows from Proposition \ref{prop 4.6} in a similar way as Proposition \ref{prop 2.11} follows from Proposition \ref{prop 2.9}.
\end{proof}

\begin{prop}\label{prop 4.9}
Let $c(D)\geq 3$ denote the length of a longest directed cycle in $D$. Then $%
\gamma _{so}(D)\leq n-\lfloor \frac{2c(D}{5}\rfloor $ and the bound is
sharp.
\end{prop}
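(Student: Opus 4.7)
The plan is to mirror the template used for Proposition 2.12 (and Proposition 3.8), drawing the bound out of Proposition \ref{prop 4.7} in exactly the same way Proposition 4.8 drew its bound from Proposition \ref{prop 4.6}. Let $C=(v_0,v_1,\dots,v_{c(D)-1},v_0)$ be a longest directed cycle in $D$, and let $S$ be a minimum SODS of the subdigraph induced by $C$. The candidate SODS of $D$ will be $S_1 = S \cup (V(D)\setminus V(C))$, so I would first verify that $S_1$ really is an SODS of $D$, then compute its cardinality, and finally exhibit an equality case.

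For the first step, the verification is routine: every $v\in V(D)\setminus S_1$ lies in $V(C)\setminus S$, so $S$ supplies both an out-dominator $u$ of $v$ and a defender (via Corollary \ref{cor 4.3}), using only arcs of $C$; since $C$ is a subdigraph of $D$, those same witnesses work in $D$, and the extra vertices $V(D)\setminus V(C)$ that sit in $S_1$ never need to be out-dominated or defended. One small thing to double-check is that $(S_1\setminus\{u\})\cup\{v\}$ still out-dominates everything in $D$: vertices in $V(C)\setminus S$ are handled by $(S\setminus\{u\})\cup\{v\}$ inside $C$, and vertices in $V(D)\setminus V(C)$ are in $S_1\setminus\{u\}$ itself, so nothing is lost.

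For the cardinality, Proposition \ref{prop 4.7} gives $|S|=\lceil 3c(D)/5\rceil$, and hence
\begin{align*}
\gamma_{so}(D)\le |S_1| &= \lceil 3c(D)/5\rceil + n - c(D) \\
&= n - \bigl(c(D) - \lceil 3c(D)/5\rceil\bigr) \\
&= n - \lfloor 2c(D)/5\rfloor,
\end{align*}
using the identity $\lceil 3k/5\rceil+\lfloor 2k/5\rfloor = k$ for every integer $k\ge 0$, which is immediate by checking the five residues of $k$ modulo $5$.

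For sharpness, the directed cycle itself suffices: if $D=C_n$ with $n\ge 3$, then $c(D)=n$ and Proposition \ref{prop 4.7} gives $\gamma_{so}(C_n)=\lceil 3n/5\rceil = n-\lfloor 2n/5\rfloor$, so equality is attained. I do not foresee any real obstacle; the only thing requiring a line of care is the defense condition when one swaps $u$ for $v$, since we must make sure the added vertices of $V(D)\setminus V(C)$ do not create new undefended vertices, but as noted above they lie in $S_1$ and therefore need no defense.
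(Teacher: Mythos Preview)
Your argument is correct and follows exactly the template the paper intends: take a longest directed cycle $C$, use Proposition~\ref{prop 4.7} to get a minimum SODS $S$ of $C$, form $S_{1}=S\cup(V(D)\setminus V(C))$, verify it is an SODS of $D$, and compute $|S_{1}|=n-\lfloor 2c(D)/5\rfloor$, with sharpness at $D=C_{n}$. One minor wording point: say ``a minimum SODS of $C$'' (the cycle itself as a subdigraph) rather than ``of the subdigraph induced by $C$,'' since the latter could be read as $D[V(C)]$, to which Proposition~\ref{prop 4.7} need not apply.
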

 \begin{proof}
  The result follows from Proposition \ref{prop 4.7} in a similar way as Proposition \ref{prop 2.12} follows from Proposition \ref{prop 2.10}.
\end{proof}

\begin{theorem}\label{thm 4.12}
 Let $T$ be a tournament. Then $\gamma _{so}(T)\leq \gamma ^{+}(T)+1.$

\begin{proof}
Let $S$ be the minimum out-dominating set of $T$ and $v\in V\setminus S,$
then clearly $S\cup \{v\}$ is an SODS of $T,$ since for every vertex $u\in
V\setminus (S\cup \{v\})$ the set $S\cup \{u\}$ is an out-dominating set of $%
T.$ Therefore, $\gamma _{so}(T)\leq \gamma ^{+}(T)+1$.

Conversely, in a tournament we have at most one vertex $v$ such that $%
d^{+}(v)=n-1.$ Then $\{v\}$ is the only out-dominating set of cardinality $1$%
. However, $v$ does not defend any vertex in $V\setminus \{v\}$. This
implies $2\leq \gamma _{so}(T).$
\end{proof}
\end{theorem}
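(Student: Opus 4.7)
The plan is to build an SODS from a minimum out-dominating set by adding a single extra vertex, exploiting the fact that in a tournament any two vertices are joined by an arc. Let $S$ be a $\gamma^+$-set of $T$. If $S=V$ the inequality is trivial, so assume $V\setminus S\neq \emptyset$ and fix any $v\in V\setminus S$. I will show that $S':=S\cup\{v\}$ is an SODS of $T$, which immediately yields $\gamma_{so}(T)\leq |S'|=\gamma^+(T)+1$.

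First, $S'\supseteq S$ is out-dominating because $S$ is. It remains to check the defense condition at each $u\in V\setminus S'$. Since $T$ is a tournament, $u$ and $v$ are joined by an arc, so $v\in (N^+(u)\cup N^-(u))\cap S'$ and $v$ is a legitimate candidate defender. Swapping $v$ for $u$ produces $(S'\setminus\{v\})\cup\{u\}=S\cup\{u\}$, which contains the out-dominating set $S$ and is therefore itself out-dominating. Hence $v$ defends $u$, and so $S'$ satisfies the definition of an SODS.

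The main obstacle one might worry about — showing that a specific vertex of $S'$ has all the right in- and out-neighbours in the leftover set after the swap — essentially evaporates because the tournament hypothesis makes $v$ adjacent to every other vertex and keeps $S$ intact inside the swapped set. This is why the one-vertex augmentation suffices without any case analysis on the structure of $S$.

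As a side remark consistent with what the excerpt observes, the extra ``$+1$'' cannot be dropped: in any tournament at most one vertex $v$ can satisfy $d^+(v)=n-1$, and even then $\{v\}$ fails to defend any $u\in V\setminus\{v\}$ (replacing $v$ by $u$ leaves the singleton $\{u\}$, which does not out-dominate $v$ unless $uv\in A$, and this cannot happen simultaneously for every $u$). Hence $\gamma_{so}(T)\geq 2$, showing the bound is tight already for transitive tournaments where $\gamma^+(T)=1$.
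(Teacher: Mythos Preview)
Your proof is correct and follows essentially the same approach as the paper: take a minimum out-dominating set $S$, adjoin any $v\in V\setminus S$, and observe that $v$ defends every $u\notin S\cup\{v\}$ because the swap yields $S\cup\{u\}\supseteq S$. You are in fact slightly more careful than the paper in spelling out why $v$ is an admissible defender (the tournament hypothesis guarantees $v\in N^+(u)\cup N^-(u)$) and in handling the trivial case $S=V$.
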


\begin{cor}
For any tournament $T$ with $n\geq 2,$ $\gamma _{so}(T)\leq \lceil \log _{2}{n}\rceil +1$.
\end{cor}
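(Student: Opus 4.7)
The plan is to chain together two results already established (or cited) in the paper. By Theorem \ref{thm 4.12}, for any tournament $T$ we have $\gamma_{so}(T) \leq \gamma^{+}(T) + 1$, so it suffices to bound $\gamma^{+}(T)$ from above by $\lceil \log_{2} n \rceil$. This latter bound is exactly Fact 2.5 of \cite{NU}, which has already been invoked in the proofs of Proposition \ref{thm 2.13} and Proposition \ref{thm 3.10}; indeed, the greedy argument used there produces an out-dominating set of size at most $\lceil \log_{2} n \rceil$ by repeatedly choosing a vertex of maximum out-degree in the current subtournament, which, by a pigeonhole sum of out-degrees, dominates at least half of the remaining vertices.

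Concretely, the proof would consist of one line: apply Theorem \ref{thm 4.12} to get $\gamma_{so}(T) \leq \gamma^{+}(T) + 1$, then apply the logarithmic tournament bound $\gamma^{+}(T) \leq \lceil \log_{2} n \rceil$ to conclude $\gamma_{so}(T) \leq \lceil \log_{2} n \rceil + 1$. There is no real obstacle here; the only subtlety worth noting is that the $n \geq 2$ hypothesis is needed so that $\log_{2} n$ is defined and nonnegative, and so that both $\gamma^{+}(T)$ and $\gamma_{so}(T)$ make sense (the case $n = 2$ gives a single arc, where $\gamma^{+}(T) = 1$ and $\gamma_{so}(T) = 2 = \lceil \log_2 2 \rceil + 1$, confirming sharpness in the trivial case).
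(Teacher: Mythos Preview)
Your proposal is correct and matches the paper's own proof essentially verbatim: the paper simply invokes $\gamma^{+}(T)\leq \lceil \log_{2} n\rceil$ from Fact~2.5 of \cite{NU} and combines it with Theorem~\ref{thm 4.12}.
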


\begin{proof}
The result follows because $\gamma^+(T)\leq \lceil \log _{2}{n}\rceil,$ as shown in Fact 2.5 of {\rm\cite{NU}}.
\end{proof}

\section{In-secure out-dominating sets}

\begin{prop}\label{prop 5.1}
Let $S$ be an ISODS of a digraph $D.$ A vertex $u\in N^{+}(v)\cap S$ defends
a vertex $v\in V\setminus S$ if, and only if, $pn^{+}(u,S)\subseteq
N^{+}[v]. $
\begin{proof}
If $w\in pn^{+}(u,S),$ and $vw\notin A,$ then $(S\setminus \{u\})\cup \{v\}$
does not dominate $w$. The converse is obvious.
\end{proof}
\end{prop}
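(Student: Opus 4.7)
The plan is to characterize when the swapped set $S' := (S \setminus \{u\}) \cup \{v\}$ is out-dominating, by tracking exactly which vertices might lose an in-neighbor from $S$ when $u$ is removed. The key observation (which is what distinguishes this result from Proposition \ref{prop 4.2}) is that since $u \in N^{+}(v)$, we have $vu \in A$, so $v$ automatically out-dominates $u$ after the swap; this is why no analogue of the condition $N^{-}(u) \cap S \neq \emptyset$ is needed.

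For the $(\Leftarrow)$ direction, I would assume $pn^{+}(u,S) \subseteq N^{+}[v]$ and show $S'$ out-dominates every vertex of $V \setminus S'$. Pick such a $w$. If $w = u$, then $v \in N^{-}(u)$ out-dominates it. Otherwise $w \notin S$ and $w \neq v$, and since $S$ is out-dominating there is some $x \in S \cap N^{-}(w)$. If $x \neq u$ then $x \in S'$ and we are done; if $u$ is the only such $x$, then $N^{-}(w) \cap S = \{u\}$, and combined with $w \notin S$ this gives $N^{-}[w] \cap S = \{u\}$, so $w \in pn^{+}(u,S)$. By hypothesis $w \in N^{+}[v]$, and since $w \neq v$ we conclude $w \in N^{+}(v)$, so $v$ out-dominates $w$.

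For the $(\Rightarrow)$ direction, I would assume $S'$ is out-dominating and pick $w \in pn^{+}(u,S)$, aiming to show $w \in N^{+}[v]$. If $w = u$ then $w \in N^{+}(v) \subseteq N^{+}[v]$ by hypothesis on $u$. Otherwise $N^{-}[w] \cap S = \{u\}$ forces $u \in N^{-}(w)$ and $w \notin S \setminus \{u\}$. If $w = v$ we are done; if not, then $w \in V \setminus S'$, so $S'$ out-dominates $w$; but no vertex of $S \setminus \{u\}$ lies in $N^{-}(w)$, so the out-dominating vertex must be $v$ itself, giving $w \in N^{+}(v)$.

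The potential obstacle is only the corner case $w = u$ in the private-neighbor set, which is handled directly once one notices that the definition $N^{-}[w] \cap S = \{u\}$ allows $w = u$ (provided $N^{-}(u) \cap S = \emptyset$). Apart from this, the proof is a routine parallel to Proposition \ref{prop 2.1} and Proposition \ref{prop 4.1}, and should fit in a few lines.
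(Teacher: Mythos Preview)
Your proof is correct and follows the same approach as the paper's own proof, which is essentially identical to that of Proposition~\ref{prop 4.1}: show by contrapositive that any $w\in pn^{+}(u,S)\setminus N^{+}[v]$ is left undominated by $(S\setminus\{u\})\cup\{v\}$, and declare the other direction obvious. Your write-up is simply more explicit about the edge cases $w=u$ and $w=v$, and about why the condition $N^{-}(u)\cap S\neq\emptyset$ from Proposition~\ref{prop 4.2} is not needed here (since $vu\in A$ already guarantees that $u$ is out-dominated after the swap).
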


\begin{cor}\label{cor 5.2}
A set $S\subseteq V$ is an ISODS if, and only if, for every $v\in V\setminus
S$ there exists $u\in N^{+}(v)\cap S$ such that $pn^{+}(u,S)\subseteq
N^{+}[v].$
\end{cor}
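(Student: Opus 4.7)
The plan is to obtain the biconditional as a direct consequence of Proposition~\ref{prop 5.1}, exactly as Corollary~\ref{cor 2.2} is extracted from Proposition~\ref{prop 2.1}. The whole content is a rephrasing of the defense of a single vertex in terms of out-private neighbors, and no new idea is needed beyond that proposition.

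For the forward direction, I would assume that $S$ is an ISODS and unpack the definition: for every $v\in V\setminus S$ there exists $u\in N^{+}(v)\cap S$ that defends $v$. Proposition~\ref{prop 5.1} then translates ``$u$ defends $v$'' into the equivalent statement $pn^{+}(u,S)\subseteq N^{+}[v]$, giving the stated condition immediately.

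For the backward direction, I would take the stated condition as the hypothesis and, for each $v\in V\setminus S$, pick the witness $u\in N^{+}(v)\cap S$ with $pn^{+}(u,S)\subseteq N^{+}[v]$. Proposition~\ref{prop 5.1} then certifies that this $u$ defends $v$, i.e.\ $(S\setminus\{u\})\cup\{v\}$ is an out-dominating set. The remaining clause of the ISODS definition is that $S$ itself be out-dominating; the plan is to derive it by examining, for a generic $v\in V\setminus S$, the out-domination supplied by the swapped set $(S\setminus\{u\})\cup\{v\}$ and transferring the in-neighbors used there back into $S$ with the aid of the $pn^{+}(u,S)\subseteq N^{+}[v]$ constraint.

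The main obstacle, which sets this corollary apart from Corollary~\ref{cor 2.2}, is precisely this out-domination check in the converse direction: in Corollary~\ref{cor 2.2} the in-neighbor witness is handed over for free through the phrasing $u\in N^{-}(v)\cap S$, whereas here the witness only provides an out-neighbor of $v$ inside $S$, so the private-neighbor condition has to shoulder the additional burden of producing the in-neighbors needed for out-domination of $S$. All other steps are a direct translation through Proposition~\ref{prop 5.1}.
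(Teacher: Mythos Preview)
The paper records this corollary without proof, immediately after Proposition~\ref{prop 5.1}. Your forward direction is exactly what is intended and is correct.

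For the backward direction you have put your finger on the right issue, but the plan you sketch cannot succeed: the statement as printed is in fact false, so out-domination of $S$ cannot be derived from the displayed condition. Consider $V=\{a,b,c\}$, $A=\{ab\}$ and $S=\{b,c\}$. Then $V\setminus S=\{a\}$; for $v=a$ the unique candidate is $u=b\in N^{+}(a)\cap S$, and one checks $pn^{+}(b,S)=\{b\}\subseteq\{a,b\}=N^{+}[a]$, so the right-hand side of the corollary is satisfied. Nevertheless $a$ has no in-neighbour in $S$, so $S$ is not out-dominating and hence not an ISODS. (Observe, incidentally, that the swapped set $(S\setminus\{b\})\cup\{a\}=\{a,c\}$ \emph{is} out-dominating, so your idea of ``transferring back'' from the swap to $S$ genuinely breaks here.)

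Comparing with Corollary~\ref{cor 4.3}, where the clause ``$S$ is out-dominating'' appears explicitly on the right-hand side, makes it clear that the same clause has simply been omitted in Corollary~\ref{cor 5.2}. Once that clause is restored, both directions follow immediately from Proposition~\ref{prop 5.1} exactly as you outline, and the extra argument you were planning becomes unnecessary.
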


\begin{theorem}\label{prop 5.4}
For every digraph $D,$  $\gamma _{iso}(D)\leq n-\delta ^{-},$ and the bound is sharp.

\begin{proof}
The result is trivial if $\delta^{-}=0.$ Hence we assume that $\delta^{-}>0.$ Take $u\in V(D)$ and $B\subseteq N^{-}(u)$ such that $|B|=\delta^{-}$. Then $S=V\setminus B$ is an ISOSD: every $v\in N^{-}(u)$ has at least one in-neighbor in $S,$ so $S
$ is out-dominating. Also, for every $v\in N^{-}(u),$ $(S\setminus
\{u\})\cup \{v\}$ is out-dominating, because $v$ out-dominates $u.$

To show that the bound is sharp, we observe that for the directed $4$-cycle $C_4,$ $\gamma _{iso}(C_4)=3.$
\end{proof}
\end{theorem}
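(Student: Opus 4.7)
The plan is to construct an explicit ISODS of size $n-\delta^-$ by deleting a carefully chosen set $B$ of $\delta^-$ vertices, and to witness sharpness via the directed $4$-cycle $C_4$.

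If $\delta^-=0$, then $S:=V$ is trivially an ISODS, so I may assume $\delta^->0$. Pick any $u\in V(D)$; since $d^-(u)\geq\delta^-$, I can select $B\subseteq N^-(u)$ with $|B|=\delta^-$ and set $S:=V\setminus B$, which has size $n-\delta^-$. Note $u\in S$ because the digraph has no loops. I must check two things: that $S$ is out-dominating, and that every $v\in B=V\setminus S$ is defended by some vertex of $N^+(v)\cap S$.

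Out-domination is a pigeonhole: for $v\in B$, if $N^-(v)\subseteq B$, then $N^-(v)\subseteq B\setminus\{v\}$ (as $v\notin N^-(v)$), giving $d^-(v)\leq\delta^--1$, a contradiction. So $N^-(v)\cap S\neq\emptyset$. For the defense step, I claim $u$ itself defends each $v\in B$. Indeed $u\in N^+(v)\cap S$ since $v\in N^-(u)$. The candidate witness is $S^\ast:=(S\setminus\{u\})\cup\{v\}$, whose complement is $(B\setminus\{v\})\cup\{u\}$. The vertex $u$ is out-dominated by $v\in S^\ast$. For any other $w\in B\setminus\{v\}$, the same pigeonhole applies with $(B\setminus\{v\})\cup\{u\}$ in place of $B$: this set has size $\delta^-$, contains $w$, and omits $w$ from $N^-(w)$, so $N^-(w)\subseteq(B\setminus\{v\})\cup\{u\}$ would force $d^-(w)\leq\delta^--1$. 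Hence $N^-(w)\cap S^\ast\neq\emptyset$, and $S^\ast$ is out-dominating.

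For sharpness I appeal to the directed $4$-cycle $C_4=(v_1,v_2,v_3,v_4,v_1)$, where $n=4$ and $\delta^-=1$. A direct enumeration shows no $2$-element subset is an ISODS (each vertex has a unique out-neighbor, which is forced to be the defender, but the resulting replacement fails to out-dominate the vertex two steps further around the cycle), while $\{v_1,v_2,v_3\}$ is an ISODS of size $3=n-\delta^-$.

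The main obstacle is the second pigeonhole, where one must notice that $w$ itself sits inside $B\setminus\{v\}$ and is excluded from $N^-(w)$; this is exactly the extra slot that forces the contradiction with $d^-(w)\geq\delta^-$, and it is what ensures that replacing $u$ by $v$ does not break out-domination on the rest of $B$, even when symmetric arcs are present.
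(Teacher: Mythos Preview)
Your proof is correct and follows the same construction as the paper: remove $\delta^-$ in-neighbors of a fixed vertex $u$, and let $u$ defend each removed vertex, with sharpness witnessed by $C_4$. Your version is in fact more complete than the paper's, which only remarks that ``$v$ out-dominates $u$'' when verifying that $(S\setminus\{u\})\cup\{v\}$ is out-dominating, whereas you explicitly carry out the second pigeonhole to show that every $w\in B\setminus\{v\}$ retains an in-neighbor in $S^\ast$.
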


\begin{obs}\label{obs 5.5}
Let $D=(V,A)$ be a digraph such that there exists $\{u,v\}\subseteq V$ with $%
N^{+}(u)\setminus \{v\}=N^{-}(v)\setminus \{u\}=V\setminus\{u,v\}.$ Then $\gamma _{iso}(D)=2.$
\end{obs}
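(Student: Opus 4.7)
The strategy is direct: show that the pair $\{u,v\}$ itself is an ISODS, giving $\gamma_{iso}(D)\le 2$; the matching lower bound $\gamma_{iso}(D)\ge 2$ then follows from Observation \ref{obs 1.99} in the non-degenerate situations in which the notion has content (in particular when $D$ has no symmetric arcs). So the real work is the upper bound, and I would carry it out by verifying the two clauses in the definition (or equivalently in Corollary \ref{cor 5.2}) one after the other, using each of the two hypotheses exactly once.

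First I would check that $S=\{u,v\}$ is out-dominating. For any $w\in V\setminus\{u,v\}$, the hypothesis $N^{+}(u)\setminus\{v\}=V\setminus\{u,v\}$ forces $uw\in A$, so $u\in N^{-}(w)\cap S$, and $S$ out-dominates every vertex outside itself.

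Next I would verify the defense clause. Fix $w\in V\setminus S$. The second hypothesis $N^{-}(v)\setminus\{u\}=V\setminus\{u,v\}$ forces $wv\in A$, so $v\in N^{+}(w)\cap S$ is a candidate defender, as required by Corollary \ref{cor 5.2}. I would then check directly that $S'=(S\setminus\{v\})\cup\{w\}=\{u,w\}$ is out-dominating: every $x\in V\setminus\{u,v,w\}$ still lies in $V\setminus\{u,v\}\subseteq N^{+}(u)$ by the first hypothesis, so $u$ out-dominates $x$; and $v$ itself is out-dominated by $w$ since $wv\in A$. Hence $v$ defends $w$ and $S$ is an ISODS. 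This sidesteps computing $pn^{+}(v,S)$ explicitly, which is why I prefer the direct route.

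I do not anticipate any real obstacle: the entire argument is one invocation of each of the two symmetric hypotheses, and the content is exactly that the vertices $u,v$ witnessing the hypothesis are themselves the minimum ISODS. The only subtlety worth flagging is that in the fully symmetric complete digraph one also has $\gamma_{iso}=1$, so the ``$=2$'' in the observation should be read in the generic setting of Observation \ref{obs 1.99}, where it is forced by the lower bound already available there.
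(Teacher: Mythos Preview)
The paper states Observation \ref{obs 5.5} without proof, so there is no argument to compare yours against. Your verification that $S=\{u,v\}$ is an ISODS is correct and is exactly the intended content: the first hypothesis makes $u$ out-dominate $V\setminus\{u,v\}$, and the second makes $v$ an out-neighbour of every $w\notin S$ whose removal leaves $\{u,w\}$ still out-dominating.

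Your flag about the lower bound is well taken and is a genuine point the paper glosses over. The hypothesis is satisfied by the complete symmetric digraph on $n\ge 2$ vertices, for which $\gamma_{iso}=1$ (any singleton is an ISODS there), so the stated equality $\gamma_{iso}(D)=2$ is literally false without the tacit assumption that $D$ has no symmetric arcs---the setting in which the immediate Corollary about transitive tournaments lives. Under that assumption Observation \ref{obs 1.99} supplies the matching lower bound, as you say. Your proof is therefore complete in the only regime where the statement is true.
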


\begin{cor}
For every transitive tournament $T,~\gamma _{iso}(T)=2.$
\end{cor}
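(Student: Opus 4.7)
The plan is to reduce the corollary immediately to Observation \ref{obs 5.5}. Let $T$ be a transitive tournament on $n \geq 2$ vertices. By transitivity we can label the vertices $v_1, v_2, \ldots, v_n$ so that $v_i v_j \in A$ if and only if $i < j$. In this labelling $v_1$ is the (unique) source, with $N^+(v_1) = V \setminus \{v_1\}$, and $v_n$ is the (unique) sink, with $N^-(v_n) = V \setminus \{v_n\}$.

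From these two identities I would read off
\[
N^+(v_1) \setminus \{v_n\} \;=\; V \setminus \{v_1, v_n\} \;=\; N^-(v_n) \setminus \{v_1\},
\]
which is exactly the hypothesis of Observation \ref{obs 5.5} applied to the pair $(u,v) = (v_1, v_n)$. The conclusion of that observation then gives $\gamma_{iso}(T) = 2$.

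There is essentially no obstacle: the work is entirely bookkeeping, namely unpacking the definition of a transitive tournament to exhibit the source/sink pair required by Observation \ref{obs 5.5}. The only small point worth a sentence is that the lower bound $\gamma_{iso}(T) \geq 2$ for $n \geq 2$ already follows from Observation \ref{obs 1.99} (a transitive tournament has no symmetric arcs, so it is nontrivial in the sense required). Thus the proof will consist of one identification of the source and sink, one line verifying the displayed equality, and one invocation of Observation \ref{obs 5.5}.
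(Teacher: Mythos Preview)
Your proposal is correct and is exactly the argument the paper intends: the corollary is stated immediately after Observation~\ref{obs 5.5} with no separate proof, so applying that observation to the source--sink pair $(v_1,v_n)$ of the transitive tournament is precisely the intended derivation. Your remark about the lower bound via Observation~\ref{obs 1.99} is slightly redundant (Observation~\ref{obs 5.5} already asserts equality), but harmless.
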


\begin{theorem}\label{prop 5.6}
Let $D$ be any digraph. Then $\gamma _{iso}(D)=n$ if, and only if, for every
$u\in V(D),$ $d^{+}(u)=0$ or $d^{-}(u)=0$.
\begin{proof}
All vertices with in-degree zero must belong to every out-dominating set. Moreover, any vertex with out-degree zero cannot be defended, so all vertices in $V$ must belong to every ISODS.
\end{proof}
\end{theorem}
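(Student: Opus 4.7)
My plan is to mirror the structure of the proof of Theorem \ref{thm 2.5}, but with the roles of $N^+$ and $N^-$ adjusted to fit the in-secure/out-dominating setting. The two directions are essentially independent, and in each case I will rely on Observation \ref{obs 1.99} as the main black box.

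For the ($\Leftarrow$) direction, I would assume that every vertex $u$ satisfies $d^+(u)=0$ or $d^-(u)=0$, and argue that $V(D)$ itself is the only ISODS. First, by Observation \ref{obs 1.99}, every vertex with $d^-(u)=0$ lies in every out-dominating set, hence in every ISODS. Second, I would pick any vertex $v$ not of the first type; by hypothesis it satisfies $d^+(v)=0$. Such a $v$ cannot be defended in the ISODS sense, because defense in the definition of ISODS requires a vertex $u\in N^+(v)\cap S$, and $N^+(v)=\emptyset$. Thus $v$ cannot lie in $V\setminus S$ for any ISODS $S$, and so $v\in S$. Combining, every vertex lies in every ISODS, giving $\gamma_{iso}(D)=n$.

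For the ($\Rightarrow$) direction, I would argue the contrapositive. Assume there is some vertex $w\in V(D)$ with $d^+(w)>0$ and $d^-(w)>0$, and pick an in-neighbor $x$ and an out-neighbor $y$ of $w$. By the last part of Observation \ref{obs 1.99}, the set $V(D)\setminus\{w\}$ is then an ISODS of $D$, which yields $\gamma_{iso}(D)\leq n-1$ and contradicts the hypothesis $\gamma_{iso}(D)=n$. Hence every vertex satisfies $d^+(u)=0$ or $d^-(u)=0$.

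There is no real obstacle here; the only subtle point is being careful that the correct neighborhood ($N^+(v)$) appears in the definition of defense for an ISODS, which is what makes vertices of out-degree $0$ undefendable and forces them into every ISODS. Once Observation \ref{obs 1.99} is invoked in both directions, the argument is a couple of lines, just as in Theorem \ref{thm 2.5}.
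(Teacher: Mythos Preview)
Your proposal is correct and follows essentially the same approach as the paper. The paper's own proof is even terser than yours---it only spells out the $(\Leftarrow)$ direction (in-degree-zero vertices lie in every out-dominating set, out-degree-zero vertices are undefendable, hence every vertex lies in every ISODS) and leaves the $(\Rightarrow)$ direction implicit via Observation~\ref{obs 1.99}, exactly as you invoke it.
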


\begin{cor}
A nontrivial graph $G$ admits an orientation $D$ such that $\gamma _{iso}(D)=n$
if, and only if, $G$ is a bipartite graph.
\end{cor}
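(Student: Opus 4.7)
The plan is to mimic the argument used in Corollary \ref{thm 2.6}, with Theorem \ref{prop 5.6} playing the role that Theorem \ref{thm 2.5} played there. Since both theorems give the same degree condition as the characterization of orientations attaining the trivial upper bound $n$, the combinatorial heart of the argument should carry over essentially word for word.

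For the forward direction, I would start from an orientation $D$ of $G$ with $\gamma_{iso}(D) = n$ and invoke Theorem \ref{prop 5.6} to conclude that every vertex $v \in V(D)$ satisfies $d^+(v)=0$ or $d^-(v)=0$. To show $G$ is bipartite, I would argue by contradiction: assume $G$ contains an odd cycle $C_{2r+1}: (v_1, v_2, \ldots, v_{2r+1}, v_1)$. Without loss of generality, $d^+(v_1)=0$ in $D$; then the edge between $v_1$ and $v_2$ is oriented as $v_2 \to v_1$, forcing $d^-(v_2) \geq 1$ and hence $d^+(v_2)=0$. An easy induction along the cycle gives $d^+(v_i)=0$ for $i$ odd and $d^-(v_i)=0$ for $i$ even. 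In particular $d^+(v_{2r+1})=0$, but then the edge $v_{2r+1}v_1$ cannot be oriented in either direction consistent with $d^+(v_1)=d^+(v_{2r+1})=0$, a contradiction.

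For the reverse direction, given a bipartition $(X,Y)$ of $G$, I would orient every edge from $X$ to $Y$. Then each $v \in X$ has $d^-(v)=0$ and each $v \in Y$ has $d^+(v)=0$, so the degree condition of Theorem \ref{prop 5.6} is satisfied, yielding $\gamma_{iso}(D)=n$.

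I do not foresee any substantive obstacle: the proof reduces entirely to the odd-cycle parity trick from Corollary \ref{thm 2.6}, once Theorem \ref{prop 5.6} is in hand. The only minor point worth noting is that the argument should be written in terms of the in/out-degree structure of the orientation rather than that of the underlying graph, so that the parity propagation around the odd cycle is unambiguous.
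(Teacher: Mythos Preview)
Your approach is essentially identical to the paper's: invoke Theorem \ref{prop 5.6} in place of Theorem \ref{thm 2.5} and rerun the odd-cycle parity argument of Corollary \ref{thm 2.6}. There is, however, a slip in your inductive base step: from the arc $v_2 \to v_1$ you deduce $d^-(v_2)\geq 1$ and hence $d^+(v_2)=0$, but in fact $v_2 \to v_1$ gives $d^+(v_2)\geq 1$, forcing $d^-(v_2)=0$. This is exactly what your stated pattern ``$d^-(v_i)=0$ for $i$ even'' requires, so once you correct this line the argument goes through as written.
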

\begin{proof}
The proof is similar to that of Corollary \ref{thm 2.6}, using Proposition \ref{prop 5.6}.
\end{proof}

\begin{prop} \label{prop 5.8}
For the directed path  $P_n=(v_1,v_2,v_3,...,v_n)$ we have
$\gamma_{iso}(P_n)=\lceil \frac {2n}3\rceil.$
\end{prop}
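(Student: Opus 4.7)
The plan is to mirror the structure of Proposition \ref{prop 2.9}: establish the upper bound by an explicit construction and the lower bound by induction on $n$.

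For the upper bound, I would take $S$ given by the pattern $101\,101\,\ldots$ padded at the end: explicitly, $v_i\in S$ iff $i\not\equiv 2\pmod 3$, together with $v_n$ when $n\equiv 2\pmod 3$. A short case analysis on $n\bmod 3$ shows $|S|=\lceil 2n/3\rceil$. To check $S$ is an ISODS via Corollary \ref{cor 5.2}: every $v_i\notin S$ has $i\equiv 2\pmod 3$ and $i\leq n-1$, its unique out-neighbor is $v_{i+1}\in S$, and $pn^{+}(v_{i+1},S)\subseteq\{v_{i+2}\}\subseteq N^{+}[v_i]$.

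For the lower bound I would induct on $n$, with base cases $1\leq n\leq 5$ verified by hand. Let $n\geq 6$ and let $S$ be a $\gamma_{iso}$-set of $P_n$. The key structural lemma, coming from Proposition \ref{prop 5.1} together with the fact that each vertex of $P_n$ has at most one in-neighbor, is: if $v_i\notin S$ with $i\leq n-2$, then $v_{i+1}\in S$ (the unique out-neighbor of $v_i$, hence the only possible defender) and $v_{i+2}\in S$ (otherwise the swap $(S\setminus\{v_{i+1}\})\cup\{v_i\}$ would fail to out-dominate $v_{i+2}$, whose only in-neighbor $v_{i+1}$ has just been removed). Since $\lceil 2n/3\rceil\leq n-2$ for $n\geq 6$, there are at least two vertices outside $S$, so at least one $v_i\notin S$ satisfies $i\leq n-2$; the degenerate case in which no such $i$ exists forces $V\setminus S\subseteq\{v_{n-1}\}$, giving $|S|\geq n-1\geq \lceil 2n/3\rceil$ directly.

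Picking such $i$, delete the arc $v_{i+1}v_{i+2}$ to obtain $P_n-v_{i+1}v_{i+2}\cong P_{i+1}\cup P_{n-i-1}$, and set $S_1=S\cap V(P_{i+1})$, $S_2=S\cap V(P_{n-i-1})$. The central step is to verify that $S_1$ and $S_2$ are ISODSs of $P_{i+1}$ and $P_{n-i-1}$ respectively: out-dominance restricts immediately; the defense of each $v_j\in V\setminus S$ inside its subpath is inherited from the defense in $P_n$, because the only vertex whose out-domination in the swap could break upon restriction is $v_{j+2}$, and the structural lemma above already forces $v_{j+2}\in S$. The induction hypothesis then yields $|S|=|S_1|+|S_2|\geq\lceil 2(i+1)/3\rceil+\lceil 2(n-i-1)/3\rceil\geq\lceil 2n/3\rceil$ via $\lceil x\rceil+\lceil y\rceil\geq\lceil x+y\rceil$. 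The main obstacle is the bookkeeping that the restricted defender assignments remain valid in each subpath, which reduces to the forcing ``$v_i\notin S\Rightarrow v_{i+1},v_{i+2}\in S$'' established above.
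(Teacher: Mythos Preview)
Your approach is essentially the paper's: exhibit the same set $S=\{v_i:i\equiv 0,1\pmod 3\}$ (adding $v_n$ when $n\equiv 2$) for the upper bound, and for the lower bound argue by induction that $S$ must contain two consecutive vertices, then cut the arc between them and apply the hypothesis to the two pieces. Your structural lemma ``$v_i\notin S\Rightarrow v_{i+1},v_{i+2}\in S$'' is just a cleaner way of locating that adjacent pair than the paper's contrapositive argument.

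One small slip to fix in your upper-bound verification: the chain $pn^{+}(v_{i+1},S)\subseteq\{v_{i+2}\}\subseteq N^{+}[v_i]$ is wrong on both counts. Since $v_i\notin S$, the vertex $v_{i+1}$ itself lies in $pn^{+}(v_{i+1},S)$; and $v_{i+2}\notin N^{+}[v_i]=\{v_i,v_{i+1}\}$. The correct computation is $pn^{+}(v_{i+1},S)=\{v_{i+1}\}$ (because $v_{i+2}\in S$, so $v_{i+2}$ is not a private out-neighbor of $v_{i+1}$), and $\{v_{i+1}\}\subseteq N^{+}[v_i]$ gives the defense. This is a bookkeeping error only; the set is an ISODS and the rest of your argument goes through.
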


\begin{proof}
 It is easy to check that the set $S=\{v_{i}:i\equiv 0,1\ ({ mod}\ 3)\}$ is an ISODS of $P_{n}$ if
    $n\equiv 0,1\ ({mod}\ 3),$ and $S\cup \{v_{n}\}$ is an ISODS
of $P_{n}$ if $n\equiv 2\ ({mod}\ 3).$ Therefore, $\gamma_{iso}(P_n)\leq\left\lceil \frac {2n}3\right\rceil.$

 We will prove by induction on $n$  that $\gamma_{iso}(P_n)\geq\left\lceil \frac {2n}3\right\rceil$ for every directed path $P_n$. If $n=2,3$, clearly $\gamma_{iso}(P_n)=2\geq\left\lceil \frac {2n}3\right\rceil$. We assume that the result is true for any  directed path with less than $n$ vertices, and let $S$ be a $\gamma_{iso}$-set of $P_n.$ Since $n\geq4,$ then at least two vertices of $S$ are adjacent. Otherwise, take $v_i\in S$ with
      $1<i<n.$   From Proposition \ref{prop 5.1}, $v_{i}$ cannot defend $v_{i-1},$ so $v_{i-1}$ is not defended. Therefore, if $n\geq4,$ there exist at least two adjacent vertices in $S.$

      Suppose  $v_i,\ v_{i+1}\ (1\leq i\leq n-1)$ are adjacent vertices in $S.$  Then $P_n -v_iv_{i+1}\cong P_i \cup P_{n-i}.$   Let $S_1$ be an ISODS of $P_i$ and $S_2$ be an ISODS of $P_{n-i}$.  Clearly $v_{i}\in S_1$ and $v_{i+1} \in S_2,$ and hence $ S=S_1\cup S_2$ is an ISODS of $P_n$.  Also by the induction hypothesis,   $\left|S_1\right|\geq\left\lceil \frac {2i}3\right\rceil$ and $ \left|S_2\right| \geq\left\lceil \frac {2(n-i)}3\right\rceil$.
    Hence $\left|S\right| \geq\left\lceil \frac {2(i+n-i)}3\right\rceil\geq\left\lceil \frac {2n}3\right\rceil$. So $\gamma_{iso}(P_n)\geq\left\lceil \frac {2n}3\right\rceil$ for every directed path  $P_n.$

\end{proof}

\begin{prop}\label{prop 5.9}
 For the directed cycle  $C_n=(v_1,v_2,v_3,...,v_n,v_1)$ with $n\geq3$ we have
$\gamma_{iso}(C_n)=\lceil \frac {2n}3\rceil.$
\end{prop}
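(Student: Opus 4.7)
The plan is to obtain the upper bound from Observation~\ref{obs 1.7} and Proposition~\ref{prop 5.8}, and to establish the matching lower bound via a short combinatorial argument based on Corollary~\ref{cor 5.2}. Since the directed path $P_n$ is a spanning subdigraph of $C_n$ (delete any single arc), Observation~\ref{obs 1.7} immediately gives $\gamma_{iso}(C_n)\leq \gamma_{iso}(P_n)=\lceil 2n/3\rceil$.

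For the lower bound, let $S$ be any ISODS of $C_n$. The key claim is that whenever $v_i\notin S$ one must have $v_{i-1},v_{i+1},v_{i+2}\in S$. The inclusion $v_{i-1}\in S$ is immediate from out-domination, since $N^-(v_i)=\{v_{i-1}\}$. For the other two, Corollary~\ref{cor 5.2} combined with $N^+(v_i)=\{v_{i+1}\}$ forces $v_{i+1}\in S$ together with $pn^+(v_{i+1},S)\subseteq N^+[v_i]=\{v_i,v_{i+1}\}$. If $v_{i+2}$ were not in $S$, then $N^-[v_{i+2}]\cap S=\{v_{i+1},v_{i+2}\}\cap S=\{v_{i+1}\}$ would put $v_{i+2}\in pn^+(v_{i+1},S)$; but $v_{i+2}\notin\{v_i,v_{i+1}\}$ for $n\geq 3$, contradicting the inclusion. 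Hence $v_{i+2}\in S$.

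Writing $V\setminus S=\{v_{i_1},\dots,v_{i_k}\}$ in cyclic order, the claim forces $i_{j+1}-i_j\geq 3$ for every $j$ (indices taken modulo $n$), so $n=\sum_j(i_{j+1}-i_j)\geq 3k$, whence $|V\setminus S|\leq\lfloor n/3\rfloor$ and $|S|\geq n-\lfloor n/3\rfloor=\lceil 2n/3\rceil$. The delicate step is deducing $v_{i+2}\in S$ from the out-private-neighbor condition; this is where the argument departs from the template used for $\gamma_{oso}(C_n)$ in Proposition~\ref{prop 2.10}, because the defending witness for an ISODS lies in $N^+(v)$ rather than in $N^-(v)$, so the direct cycle-to-path reduction by deleting the arc out of $v_i$ no longer works and one must instead exploit Corollary~\ref{cor 5.2}.
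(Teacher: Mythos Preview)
Your proof is correct, and the lower-bound argument is genuinely different from the paper's. Both proofs obtain the upper bound the same way, via Observation~\ref{obs 1.7} and Proposition~\ref{prop 5.8}. For the lower bound, however, the paper does \emph{not} abandon the cycle-to-path reduction: it first verifies the small cases $n\in\{3,4\}$ by hand, then for $n\geq 5$ argues (via Proposition~\ref{prop 5.1}) that any $\gamma_{iso}$-set $S$ of $C_n$ must contain two adjacent vertices $v_i,v_{i+1}$; deleting the arc $v_iv_{i+1}$ between these two vertices of $S$ yields a copy of $P_n$ on which $S$ remains an ISODS, and Proposition~\ref{prop 5.8} finishes. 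So the reduction survives, just with the cut placed \emph{inside} $S$ rather than at a non-$S$ vertex.

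Your approach bypasses the reduction entirely: from Corollary~\ref{cor 5.2} you extract the local constraint that every $v_i\notin S$ forces $v_{i+1},v_{i+2}\in S$, whence consecutive non-$S$ vertices are cyclically at least $3$ apart and $|V\setminus S|\leq\lfloor n/3\rfloor$. This is more self-contained (no appeal to the inductive path result for the lower bound, no separate small-case check) and arguably cleaner. The paper's route, on the other hand, maximally reuses Proposition~\ref{prop 5.8} and mirrors the template used throughout Sections~2--5, so it fits the paper's narrative better. Your closing remark that the Proposition~\ref{prop 2.10}-style cut ``out of $v_i$'' fails is accurate, but note that the paper's actual fix is simply to cut at an arc with both endpoints in $S$ instead.
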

\begin{proof} By   Observation \ref{obs 1.7} and Proposition \ref{prop 5.8}, $\gamma_{iso}(C_n)\leq\gamma_{iso}(P_n)= \lceil \frac {2n}3\rceil.$

It is easy to check that for $3\leq n\leq4,$ $\gamma_{iso}(C_n)\geq\lceil \frac {2n}3\rceil.$ For $n\geq5$, following a reasoning similar to that of Proposition \ref{prop 5.8}, in every ISODS of $C_n$ there are at least two adjacent vertices. Let $S$ be a $\gamma_{iso}$-set of $C_n$, and let $\{v_i,v_{i+1}\}\subseteq S.$ Notice that $S$ is an ISODS of the path $C_n-v_iv_{i+1}\cong P_n,$ since $v_{i+1}$ out-dominates $v_{i+2},$ $v_{i}$ defends $v_{i-1},$ and all other adjacencies are not altered. Therefore, $\gamma_{iso}(C_n)\geq \gamma_{iso}(P_n)=\lceil \frac {2n}3\rceil.$
\end{proof}

\begin{prop} \label{prop 5.10} Let $l(D)$ denote the length of a longest directed path in $D$.  Then
$\gamma_{iso}(D)\leq n-\lfloor\frac {l(D)+1}3\rfloor$ and the bound is sharp.
\end{prop}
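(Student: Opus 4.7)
The plan is to imitate exactly the proof of Proposition \ref{prop 2.11} (which bounds $\gamma_{oso}$ via $\gamma_{oso}(P_n)$), substituting Proposition \ref{prop 5.8} (which gives $\gamma_{iso}(P_n) = \lceil \tfrac{2n}{3} \rceil$) for Proposition \ref{prop 2.9}. Let $P = (v_0,v_1,\ldots,v_k)$ be a longest directed path in $D$, so $k = l(D)$. Let $T$ be a minimum ISODS of the subdigraph $P$; by Proposition \ref{prop 5.8}, $|T| = \lceil \tfrac{2(l(D)+1)}{3} \rceil$. Define
\[
S \;=\; T \;\cup\; \bigl(V(D)\setminus V(P)\bigr),
\]
and count: $|S| = n - (l(D)+1) + \lceil \tfrac{2(l(D)+1)}{3} \rceil = n - \lfloor \tfrac{l(D)+1}{3} \rfloor$, using the identity $m - \lceil 2m/3 \rceil = \lfloor m/3 \rfloor$.

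The core step is verifying that $S$ is actually an ISODS of $D$. Every $v \in V(D)\setminus S$ lies in $V(P)\setminus T$. Since $T$ out-dominates $v$ in $P$, the same in-neighbor lies in $S \cap N^-_D(v)$, so $S$ is out-dominating in $D$. Because $T$ is an ISODS of $P$, there exists $u \in T \cap N^+_P(v) \subseteq S \cap N^+_D(v)$ such that $(T\setminus\{u\})\cup\{v\}$ is an out-dominating set of $P$. I claim that $(S\setminus\{u\})\cup\{v\}$ is out-dominating in $D$.

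The only subtlety — and the place where one must be careful — is to check that enlarging $T$ to $S$ cannot destroy the defending property. This is handled by the observation that $u \in T \subseteq V(P)$, so every vertex of $V(D)\setminus V(P)$ still belongs to $S\setminus\{u\}$ and therefore needs no in-neighbor in the new set at all. Any vertex $w \in V(D)$ that is not in $(S\setminus\{u\})\cup\{v\}$ must lie in $V(P)\setminus\bigl((T\setminus\{u\})\cup\{v\}\bigr)$, and such a $w$ is out-dominated within $P$ by the defending property of $u$ for $v$ in $T$; its in-neighbor (which lies in $(T\setminus\{u\})\cup\{v\}$) is still an in-neighbor in $D$. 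Hence $S$ is an ISODS and $\gamma_{iso}(D) \leq |S| = n - \lfloor \tfrac{l(D)+1}{3} \rfloor$.

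For sharpness, directed paths themselves realize equality: taking $D = P_n$ gives $l(D) = n-1$, and the bound reads $n - \lfloor n/3 \rfloor = \lceil 2n/3 \rceil = \gamma_{iso}(P_n)$ by Proposition \ref{prop 5.8}. I do not anticipate any genuine obstacle beyond the bookkeeping in the defending verification, which is completely parallel to the OSODS argument already carried out in Proposition \ref{prop 2.11}.
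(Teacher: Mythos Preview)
Your proof is correct and follows exactly the approach the paper indicates: the paper's own proof simply says the result follows from Proposition~\ref{prop 5.8} in the same way Proposition~\ref{prop 2.11} follows from Proposition~\ref{prop 2.9}. You have in fact supplied more detail than the paper does, carefully verifying that enlarging the ISODS of $P$ by $V(D)\setminus V(P)$ preserves both out-domination and the defending property in $D$, whereas the paper's analogous step in Proposition~\ref{prop 2.11} is asserted as ``clear.''
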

\begin{proof}
 The result follows from Proposition \ref{prop 5.8} in a similar way as Proposition \ref{prop 2.11} follows from Proposition \ref{prop 2.9}.
\end{proof}

\begin{prop} \label{prop 5.11} Let $c(D)$ denote the length of a longest directed cycle in $D$.
  Then $\gamma_{iso}(D)\leq n-\lfloor \frac {c(D)}3\rfloor$ and the bound is sharp.
 \end{prop}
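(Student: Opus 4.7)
My plan is to mimic the strategy used to derive Proposition \ref{prop 2.12} from Proposition \ref{prop 2.10}, now applied to ISODSs by leveraging the exact formula for directed cycles established in Proposition \ref{prop 5.9}. The idea is to take a longest directed cycle in $D$, obtain an efficient ISODS on it, and then pad with everything outside the cycle.

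More precisely, let $C=(v_1,v_2,\dots,v_{c(D)},v_1)$ be a longest directed cycle in $D$, and let $S$ be a $\gamma_{iso}$-set of $C$, so by Proposition \ref{prop 5.9} we have $|S|=\lceil \frac{2c(D)}{3}\rceil$. I would then set $S_1=S\cup (V(D)\setminus V(C))$ and compute
\[
|S_1|=\left\lceil \tfrac{2c(D)}{3}\right\rceil+(n-c(D))=n-\left\lfloor \tfrac{c(D)}{3}\right\rfloor.
\]
It remains to verify that $S_1$ is an ISODS of $D$. Since $V(D)\setminus V(C)\subseteq S_1$, we have $V(D)\setminus S_1\subseteq V(C)\setminus S$. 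For any $v\in V(D)\setminus S_1$, the fact that $S$ is out-dominating in $C$ supplies an in-neighbor of $v$ in $S\subseteq S_1$, so $S_1$ is out-dominating in $D$. Moreover, since $S$ is an ISODS of $C$, there is a defender $u\in N^+(v)\cap S$ such that $(S\setminus\{u\})\cup\{v\}$ out-dominates $C$; the same $u$ works in $D$, because every vertex of $V(D)\setminus V(C)$ already lies in $S_1\setminus\{u\}$, and every vertex of $V(C)$ outside $(S\setminus\{u\})\cup\{v\}$ has an in-neighbor inside $(S\setminus\{u\})\cup\{v\}\subseteq (S_1\setminus\{u\})\cup\{v\}$ by the defense in $C$. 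Thus $S_1$ is an ISODS, giving $\gamma_{iso}(D)\le n-\lfloor c(D)/3\rfloor$.

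For sharpness, I would simply take $D=C_n$; then $c(D)=n$ and Proposition \ref{prop 5.9} gives $\gamma_{iso}(C_n)=\lceil 2n/3\rceil=n-\lfloor n/3\rfloor$, matching the bound.

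The only place where care is needed is the verification that a defender chosen inside $C$ continues to defend $v$ once we enlarge $S$ by all vertices outside the cycle; but this is immediate, because enlarging $S$ can only add more potential dominators, and the set $(S_1\setminus\{u\})\cup\{v\}$ still contains $(S\setminus\{u\})\cup\{v\}$, which already out-dominates $V(C)$, while the vertices of $V(D)\setminus V(C)$ are themselves in the set and need not be dominated. So no genuine obstacle appears, and the proof is really just a parallel copy of the path-to-digraph argument used in Propositions \ref{prop 2.11}, \ref{prop 2.12}, \ref{prop 3.8}, \ref{prop 4.9}, and \ref{prop 5.10}.
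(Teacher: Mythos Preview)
Your proof is correct and follows exactly the approach the paper intends: the paper's own proof simply reads ``The result follows from Proposition \ref{prop 5.9} in a similar way as Proposition \ref{prop 2.12} follows from Proposition \ref{prop 2.10},'' and your argument is precisely that template instantiated for ISODSs, with the verification and sharpness example spelled out in more detail than the paper itself provides.
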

\begin{proof}
 The result follows from Proposition \ref{prop 5.9} in a similar way as Proposition \ref{prop 2.12} follows from Proposition \ref{prop 2.10}.
\end{proof}

\section{Conclusions and scope}

In this work we introduced four ways of extending secure dominating sets to digraphs, each with mathematical interest and several applications. There are a lot of questions remaining open. We write here some of the most interesting:

\begin{prb} Characterize digraphs for which equality holds in the inequalities stated in Proposition \ref {prop 1.8}.
\end{prb}

\begin{prb} Characterize digraphs for which equality holds in the inequalities stated in Propositions \ref {prop 2.11}, \ref {prop 2.12}, \ref {prop 3.7}, \ref {prop 3.8}, \ref {prop 4.8}, \ref {prop 4.9}, \ref {prop 5.10}, \ref {prop 5.11}.
\end{prb}

\begin{prb} Characterize digraphs for which equality holds in the inequalities stated in Theorems \ref {thm 2.3}, \ref {thm 2.4}, \ref {thm 3.3}, and Propositions \ref {prop 5.3}, \ref {prop 5.4}.
\end{prb}

We also have the following conjecture:

\begin{con}
 If $D$ is a digraph of order $n$ with $\delta^{0}\geq 1,$ then $\gamma _{oso}(D)\leq \lceil\frac{2n}{3}\rceil $ and $\gamma _{iso}(D)\leq \lceil\frac{2n}{3}\rceil .$
\end{con}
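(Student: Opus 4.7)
My plan is to prove both inequalities by induction on $n$, in close analogy with the inductive arguments of Propositions \ref{prop 2.9}, \ref{prop 2.10}, \ref{prop 5.8}, and \ref{prop 5.9}. Small base cases would be verified by direct case analysis. For the inductive step, I would exploit the fact that $\delta^{0}\geq 1$ forces $D$ to contain a directed cycle; from a longest such cycle I would select three consecutive vertices $a\to b\to c$ and place two of them in the prospective set $S$: the pair $\{a,b\}$ for the OSODS bound, and the pair $\{b,c\}$ for the ISODS bound. In each case the remaining vertex of the triple is defended by the middle one (via $b\in N^{-}(c)\cap S$ for OSODS, and via $b\in N^{+}(a)\cap S$ for ISODS), using the defence characterisations of Propositions \ref{prop 2.1} and \ref{prop 5.1}. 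Applying the inductive hypothesis to $D'=D-\{a,b,c\}$ would yield a suitable set $S'$ of size at most $\lceil 2(n-3)/3\rceil$, and then $S'\cup S_{0}$ would have size at most $\lceil 2n/3\rceil$, as required.

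\textbf{Main obstacle.} The chief difficulty is that deleting $\{a,b,c\}$ can drop the in-degree or out-degree of other vertices of $D$ to zero, so that $D'$ does not satisfy the induction hypothesis $\delta^{0}(D')\geq 1$. I plan to address this by choosing the triple from a longest directed cycle of $D$, which localises the degree collapses near $\{a,b,c\}$, and by absorbing into $S$ any newly exposed source or sink of $D'$; the critical amortised claim would be that on average at most one such absorption is needed per three deleted vertices, preserving the target $\lceil 2n/3\rceil$. A secondary concern is verifying the out-private-neighbour condition of Propositions \ref{prop 2.1} and \ref{prop 5.1} across the boundary between $\{a,b,c\}$ and $V(D')$; choosing the triple on a long cycle, and possibly enlarging $S$ by a controlled number of boundary vertices, should make this verification go through.

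As a backup route, one might try to exhibit a spanning subdigraph $D_{1}\subseteq D$ that is a disjoint union of directed cycles and conclude via Observation \ref{obs 1.7} together with Propositions \ref{prop 2.10} and \ref{prop 5.9}. This would settle the Hamiltonian directed-cycle case immediately, but it is less flexible in general: such cycle covers need not exist when $\delta^{0}=1$, and even when they do, the sum $\sum_{i}\lceil 2n_{i}/3\rceil$ can exceed $\lceil 2n/3\rceil$ by roughly the number of cycles in the cover. For this reason the inductive strategy above, which trades structural uniformity for the ability to handle one triple at a time, appears the more promising line of attack.
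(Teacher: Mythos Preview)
The statement you are attempting to prove is labelled as a \emph{Conjecture} in the paper and is left open there; the authors provide no proof and even remark that ``it may be easier to prove the result for $\gamma_{so}(D)$.'' So there is no argument in the paper to compare your proposal against.

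As for the proposal itself, you have correctly identified the crux: after deleting the triple $\{a,b,c\}$, the residual digraph $D'$ need not satisfy $\delta^{0}(D')\geq 1$, so the induction hypothesis is unavailable. However, neither of your two suggested repairs is justified. First, picking $\{a,b,c\}$ on a longest directed cycle does \emph{not} localise degree collapses near the triple: any number of vertices outside the cycle may have their unique in-neighbour (or out-neighbour) among $\{a,b,c\}$. For a simple illustration, take a directed $3$-cycle on $\{a,b,c\}$ and add $k$ vertices $v_{1},\dots,v_{k}$ with arcs $a\to v_{i}$ and $v_{i}\to b$; the longest cycle has length $4$, yet deleting any triple containing both $a$ and $b$ simultaneously strips every $v_{i}$ of both its in- and out-neighbour. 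Second, the ``critical amortised claim'' that absorptions cost at most one extra vertex per deleted triple is asserted but not argued, and the example above already shows that a single deletion step can force $\Theta(n)$ absorptions. Even if one absorbs a newly created source (which must enter any out-dominating set), there is no reason the resulting set meets the defence conditions of Propositions~\ref{prop 2.1} or~\ref{prop 5.1} across the boundary without further, uncounted additions.

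Your backup route via a spanning union of directed cycles is, as you note, blocked both by nonexistence under $\delta^{0}=1$ and by the rounding loss $\sum_{i}\lceil 2n_{i}/3\rceil>\lceil 2n/3\rceil$. In short, the plan isolates the right obstacle but does not overcome it; what remains is precisely the substance of the conjecture.
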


It may be easier to prove the result for $\gamma _{so}(D).$\\

It is not difficult to see that for most graphs $G$, different orientations of $G$  result in different cardinalities for a minimum OSODS, OSDS, SODS, and ISODS. This suggests the following definitions:
For a graph $G,$ the lower orientable  out-secure out-domination number $dom_{oso}(G)$ of $G$ is  defined by

$$ dom_{oso}(G)=min\{\gamma_{oso}(D)\ |\ D\ is\ an\ orientation\ of\ G \},$$
while the upper orientable out-secure out-domination number $DOM_{oso}(G)$ of  $G$ is defined by

$$ DOM_{oso}(G)=max\{\gamma_{oso}(D)\ |\ D\ is\ an\ orientation\ of\ G \}.$$

The parameters $dom_{os}(G),\ dom_{so}(G),$ and $dom_{iso}(G)$ are defined in a similar way, as are its DOM counterparts. There are several interesting questions arising from these definitions. In particular, we have the following open problem:
\begin{prb}  Let $G$ be a graph with $dom_{os}(G)=a$ and $DOM_{os}(G)=b.$ If $c$ is an integer with $a\leq c\leq b,$ is there an orientation $D$ of $G$ such that $\gamma_{os}(D)=c$?
\end{prb}
The same problem is worth considering for OSODS, SODS, and ISODS. We believe that the answer is affirmative for OSDS, while for the other parameters we have no guess.

\end{document}